\newcommand\reallytiny{\@setfontsize\reallytiny{5}{6}}
\newcommand{\unit}{\text{\textbf{1}}}
\newcommand{\cR}{\mathcal{R}}
\newcommand{\cC}{{\mathcal C}}
\newcommand{\Z}{{\mathbb Z}}
\newcommand{\C}{{\mathbb{C} }}
\newcommand{\bZ}{\mathbb{Z}}
\newcommand{\id}{\operatorname{id}}
\newcommand{\cB}{\mathcal{B}}
\newcommand{\Fun}{\operatorname{Fun}}
\newcommand{\Rep}{\operatorname{Rep}}
\newcommand{\Irr}{\operatorname{Irr}}
\newcommand{\Hom}{\operatorname{Hom}}
\newcommand{\Id}{\operatorname{Id}}
\newcommand{\mfg}{\mathfrak{g}}
\theoremstyle{plain}
\numberwithin{equation}{section}
\newtheorem{theorem}{Theorem}[section]
\newtheorem{lemma}[theorem]{Lemma}
\newtheorem{corollary}[theorem]{Corollary}
\newtheorem{proposition}[theorem]{Proposition}
\newcommand{\xdownarrow}[1]{%
  {\left\downarrow\vbox to #1{}\right.\kern-\nulldelimiterspace}
}
\theoremstyle{definition}
\newtheorem{definition}[theorem]{Definition}
\newtheorem{example}[theorem]{Example}
\theoremstyle{remark}
\newtheorem{remark}[theorem]{Remark}
\author[C. Galindo]{C\'esar Galindo}
\address{ Departamento de Matem\'aticas, Universidad e los Andes, Bogot\'a, Colombia}
\email{cn.galindo1116@uniandes.edu.co}
\author[G. Mora]{Giovanny Mora}
\address{ Departamento de Matem\'aticas, Universidad de los Andes, Bogot\'a, Colombia}
\email{hg.mora@uniandes.edu.co}
\author[E. Rowell]{Eric C. Rowell}
\address{ Department of Mathematics, Texas A\&M University, College Station, TX, USA}
\email{rowell@tamu.edu}
\begin{document}

\title[Braided Zestings of Verlinde Modular Categories]{Braided Zestings of Verlinde Modular Categories and Their Modular Data}

\thanks{C.G. was partially supported by Grant INV-2023-162-2830 from the School of Science of Universidad de los Andes. G. M. was partially supported by Grant INV-2023-175-2903 from the School of Science of Universidad de los Andes. E.C.R. was partially supported by US NSF grant DMS-2205962.}

\begin{abstract}
Zesting of braided fusion categories is a procedure that can be used to obtain new modular categories from a modular category with non-trivial invertible objects. In this paper, we classify and construct all possible braided zesting data for modular categories associated with quantum groups at roots of unity. We produce closed formulas, based on the root system of the associated Lie algebra, for the modular data of these new modular categories.
\end{abstract}

\subjclass[2000]{16W30, 18D10, 19D23}

\date{\today}
\maketitle

\section{Introduction}
The representation categories $\Rep(U_q \mathfrak{g})$ of quantum groups at roots of unity are a ubiquitous source of modular categories, obtained as certain subquotients (see, e.g., \cite{BK}). In \cite[Section 8.18.2]{EGNO} these are called \emph{Verlinde categories}, which we denote by $\cC(\mfg,k)$: here $k\in\bZ^{>0}$ is the level which depends only on the order of the root of unity, see Section \ref{section:Verlinde}. In low ranks, where classifications are available, nearly all modular categories are subcategories of some Verlinde category or products of them: see \cite{RSW,NRWW} for ranks$\leq 6$, from which one can see that this is strictly the case at the level of fusion rules. For modestly larger ranks one starts to see modular data without such realizations \cite{NRW}, and it becomes a challenge to find any realization.

\emph{(Braided) zesting} \cite{zesting} is a construction technique developed to realize fusion rules that are closely related to those of known categories, encountered in various classifications \cite{AIM2012,NRWW}. 
Zesting can be applied to any braided fusion category $\cB$ with a $G$-grading and some non-trivial invertible objects in the trivially graded component.  The main idea is to twist the fusion rules of $\cB$ by means of a 2-cocycle $\lambda$ on the grading group with coefficients in the invertible objects in the the trivial component.  One then adjusts the associativity constraint appropriately (if possible) to obtain a new fusion category $\cB^\lambda$.  Then one can look for suitable modifications of the original braiding on $\cB$ to equip $\cB^\lambda$ with a braiding.

If $\mathcal{B}$ is pseudo-unitary, modular, and the grading group $A$ is the universal one, it can be shown \cite{zesting} that any braided zesting has a canonical modular structure that is also pseudo-unitary. In this case, we can shift our focus to $\mathcal{B}_{\text{pt}}$, the maximal pointed part of $\mathcal{B}$, which is completely determined by the modular data $M$. We can then talk about a zesting of the pre-metric group associated with $\mathcal{B}_{\text{pt}}$ or the modular data itself; see Theorem \ref{th:new_modular_data}.
For a pre-metric group $(A, q)$ (see Definition \ref{def: premetirc group A0 y c}), we define the set of equivalence classes $\mathbf{Sez}(A, q)$ of zesting data for $(A, q)$ and provide a cohomological parametrization of inequivalent zesting data, and similarly for the braided zesting data. This facilitates a thorough analysis of zestings of the Verlinde categories.

Modular Verlinde categories $\cC(\mfg,k)$ often have a $A$-grading and thus braided zesting can be applied. Moreover, these are typically pseudo-unitary. While several examples have been carried out in \cite{zesting}, most cases remain unexplored.  The universal grading group of $\cC(\mfg,k)$ is always cyclic, except for type $D_r$ with $r$ even in which case the grading group is the Klein four group $\bZ/2\times\bZ/2$.  The methods of \cite{zesting} can be employed to handle the cyclic cases, but the complicated nature of the cohomology of the Klein 4 group offers a new challenge.  In particular there are two distinct cases, depending on if the invertible objects are all bosons or if there are fermions present.  

In this article we work out the modular data of braided zestings for all pseudo-unitary modular Verlinde categories, in each case taking the universal grading.  This is achieved by developing methods for handling the non-cyclic cases and adapting known methods for the cyclic cases.  We provide several interesting new examples of categories not directly constructed as subcategories of $\mathcal{C}(\mathfrak{g}, k)$; see Examples~\ref{example:E6level3}, \ref{example:so7level2}, and \ref{example:sp6level2}, and Section~\ref{subsubsection:so4s}.

Here is a more detailed outline of the contents of this paper. In Section~\ref{section:prelim}, we recall basic definitions, and in Section~\ref{section:zesting}, we describe the zesting construction and provide the necessary cohomological notation. Some specific groups are treated in Section~\ref{section:families}, with their cohomological data carefully parameterized. In Section~\ref{section:Verlinde}, we introduce the Verlinde categories, and in Subsection~\ref{ExamplesVerlinde}, we explicitly compute the zestings of their modular data. Additional technical details and notation are provided in the appendix.

\section{Preliminaries}\label{section:prelim}

We will briefly recall the basic definitions of fusion ring and modular data. For details of the results cited in this section, we refer the reader to \cite{EGNO} and \cite{BK}.

\subsection{Fusion categories and fusion rings}
By a fusion category, we mean a $\mathbb{C}$-linear, semisimple, rigid tensor category, $\mathcal{C}$, with a finite number of isomorphism classes of simple objects and finite-dimensional spaces of morphisms. Furthermore, we require that $\unit$, the unit object of $\cC$, is a simple object. By a fusion subcategory of a fusion category $\cC$, we always mean a full tensor subcategory. An example of a subcategory is the maximal pointed subcategory $\cC_{\text{pt}} \subset \cC$ generated by the invertible objects of $\cC$. We say that $\cC$ is pointed if $\cC = \cC_{\text{pt}}$. We refer the reader to \cite{EGNO} for a comprehensive theory of such categories.

Let $\cC$ be a fusion category. Its Grothendieck ring, $K_0(\cC)$, is the free $\mathbb{Z}$-module generated by $\Irr(\cC)=\{X_0=\unit, X_1,\ldots,X_n\}$ 
(the set of isomorphism classes of simple objects of $\cC$) with multiplication induced from the tensor product in $\cC$. The Grothendieck ring of a fusion category exhibits a combinatorial structure known as a \emph{fusion ring}, which we will recall in the following.

A \emph{fusion ring} $\cR$ is a finitely generated ring with a unit $1$, free as a $\mathbb{Z}$-module, and equipped with a distinguished basis $B=\{ b_i \}_{i\in I}$ where $b_0=1\in B$. This fulfills the condition:
\[b_i b_j = \sum_{k\in I}N_{ij}^k b_k\]
where $N{ij}^k \in \mathbb{Z}^{\geq 0}$ are called the structure constants of $R$, and an involution $*: B \to B$ such that the induced map 

\begin{align*}
    *: R&\to R, \quad
    r=\sum_{i\in I}r_i b_i \mapsto r^*= \sum_{i\in I}r_i b_i^* , \ \ \ r_i\in \mathbb{Z},
\end{align*}
is an anti-involution of the ring $\cR$ and
$$N_{i,j}^0= \begin{cases}
1, & \text{ if } i=j^* \\
0, & \text{ else.}
\end{cases}
$$

The prototypical example of a fusion ring is $K_0(\mathcal{C})$, the Grothendieck ring of a fusion category $\mathcal{C}$. Here,  the basis consists of the set of isomorphism classes of 
simple objects $\operatorname{Irr}(\mathcal{C})$. For each $ X_i \in \mathrm{Irr}(\mathcal{C})$ (where $\mathrm{Irr}(\mathcal{C})$ denotes the set of isomorphism classes of simple objects), define $i^*$ as an element in the index set $I = \{0, 1, \ldots, n\}$ such that $X_{i^*}$ is defined to be $X_i^*$.
Furthermore, for $X_i, X_j, X_k \in \operatorname{Irr}(\mathcal{C})$, define $N_{i,j}^k := \operatorname{dim}_{\mathbb{C}}\operatorname{Hom}_{\mathcal{C}}(X_k, X_i \otimes X_j)$. For additional details, refer to \cite{EGNO}.

\subsection{Modular categories and their modular data}
We provide the essential definitions and notation for modular categories. For an exhaustive discussion encompassing all these axioms and definitions, consult \cite{EGNO}.
A \emph{braiding} in a fusion category is a natural family of isomorphisms $c_{X,Y}:X\otimes Y\cong Y\otimes X$ satisfying the hexagon axioms. A fusion category equipped with a braiding is known as a \emph{braided fusion category}.

In a braided fusion category, a \emph{ribbon twist} is a natural family of isomorphisms $\theta_X:X\to X$ that is compatible with the braiding and duality. This compatibility permits the definition of a \emph{quantum trace}, denoted by $\operatorname{Tr}:\operatorname{End}\cC(X)\to \mathbb{C}, f\mapsto \operatorname{Tr}(f)$.

A \emph{premodular category} is a braided fusion category with a twist. It gains the label \emph{modular} if the \emph{$S$-matrix}, defined by $s_{i,j}:=\operatorname{Tr}(c_{X_i,X_j}\circ c_{X_j,X_i})$ where $X_i,X_j \in \Irr(\cC)=\{X_0=\unit,X_1,\ldots, X_n\}$, is invertible.

Each dimension $d_i := s_{0,i}=\operatorname{Tr}(\operatorname{id}_{X_i})$ is a real number. The entries of the $S$-matrix are known to satisfy $s_{ij} = s_{ji} = s_{i^*j^*}$ and  $\sum_{j} s_{ij} s_{jl} = \delta_{i^*l}D^2$, where $D = \sqrt{\sum_{i \in I} d_i^2}$ is the \emph{categorical dimension} of $\cC$.

Since the twist of a simple object is a scalar times the identity, for any simple object, we will identify the twist $\theta_i\Id_{X_i}:=\theta_{X_i}$ with the complex number $\theta_i$. Each $\theta_i$, $i \in I$, is a root of unity. The diagonal matrix defined by $T_{ij}=\delta_{ij}\theta_i^{-1}$ is called the $T$-matrix. 

Of all the identities involving the $S$-matrix and the $T$-matrix, the \emph{Verlinde formula} stands out as particularly significant. This equation implies that the structure constants of the Grothendieck rings are entirely determined by the $S$-matrix.

\begin{equation*}\label{eq:verlinde_formula}
N_{j,k}^{l}=\frac{1}{D}\sum_{i\in I}\frac{s_{i,j}s_{i,k}s_{i^*,l}}{d_i}, \quad i,j,k\in I.
\end{equation*}

The $S$-matrix and the $T$-matrix exhibit numerous properties, which are concisely captured in the notion of a \emph{modular datum}.
 This concept acts as a combinatorial parallel to the modular category, analogous to how a fusion ring is intrinsically connected to a fusion category. Such definitions are important as modular categories often are classified based on their modular data, as seen in \cite{RSW,NRWW,NRW}.

\begin{definition}
A \emph{modular datum} $M$ consists of a finite set $I$, with its elements called \emph{labels} of $M$, including  a unique element $0\in I$, and complex $|I|$-square matrices, $S$ and $T$, named the $S$-matrix and $T$-matrix respectively. The matrices must satisfy:
\begin{enumerate}
    \item $T$ is diagonal, $S$ is symmetric, and the scalars $d_i:=s_{0,i} \neq 0$ for all $i\in I$ with $s_{0,0}=1$.
    \item The functions $b_i (j):=\frac{s_{ij}}{s_{0j}}$ in $\Fun(I, \mathbb{C})$ span a fusion ring $\cR_{M}$, where they form the distinguished basis.
    \item $d_i=d_{i^*}$.
    \item The matrices $S$ and $T$ yield a projective representation of the modular group $SL_{2}(\mathbb{Z})$, implying the existence of a $\tau \in \mathbb{C}^*$ such that $(ST)^3 = \tau S^2$. Furthermore, this operator commutes with $T$, and $T_{0,0} = 1$.
\end{enumerate}
\end{definition}

\subsection{The universal grading of a modular category}

Let $(\cR,\bold{B})$ be a fusion ring and let $G$ be a finite group. A \emph{grading} of $\cR$ by $G$ is a map $|-| : B \to G$ that satisfies: for all $i,j,k \in \bold{B}$ with $N_{i,j}^k\neq 0$, we have $|k| = |i||j|$. We will associate a grading with its corresponding $G$-grading of the ring $\cR$ given by
\begin{equation*}
\cR= \bigoplus_{g\in G}\cR_g,
\end{equation*}
where $\cR_g$ is the $\mathbb{Z}$-submodule generated by all elements in $B$ of degree $g \in G$.
The $\mathbb{Z}$-submodule $\cR_1$ is termed the \emph{trivial component} of the grading. A grading is called faithful if the map $|-| :\bold{B} \to G$ is surjective.
For any fusion ring $\cR$, there exists a universal grading $\bold{B} \to U(\cR)$, where $U(\cR)$ denotes the universal grading group of $\cR$, see \cite{nilpotent}. Every grading of $\cR$ arises from a quotient of $U(\cR)$. The trivial component of the universal grading is the adjoint fusion subring $\cR_{ad} \subset \cR$, generated by all elements $b_k \in B$ for which $N_{i,i^*}^k\neq 0$ for some $b_i\in \bold{B}$.

Given a fusion ring $(\cR, \bold{B})$, we denote the \emph{group of units} of the fusion ring $\cR$ as $G(\cR) := \{ b_i\in \bold{B} : b_ib_{i^*}=1 \}$, using the product of $\cR$. In the case of $K_0(\cC)$, the Grothendieck ring of a fusion category $\cC$, the group $G(K_0(\cC))$ corresponds to the group of $\otimes$-invertible objects in $\cC$.

The following result has been previously established for modular tensor categories in \cite{nilpotent}. However, it can be generalized to any modular data, provided that a certain technical condition holds. The proof remains identical to that of \cite[Theorem 6.3]{nilpotent} and is therefore omitted here.

\begin{proposition}\cite[Theorem 6.2]{nilpotent} \label{grading modular data}
Let $M$ be a modular datum. Assume that for each unit $j \in I$, the equation
\begin{equation*}\label{eq:universal_grading_condition}
\frac{s_{kj}}{d_k d_j} = \frac{s_{ij}}{d_i d_j} \frac{s_{lj}}{d_l d_j}
\end{equation*}
is satisfied for all indices. Consider the mapping
\begin{align*}
|-| : \mathcal{R}_M &\to \widehat{G(\mathcal{R}_M)} \\
b_i &\mapsto [b_j\mapsto \frac{s_{ij}}{s_{0i}s_{0j}}], \notag
\end{align*}
where $\widehat{G(\mathcal{R}_M)}$ is the abelian group of linear characters of $G(\mathcal{R}_M)$. This mapping induces a faithful grading on $\mathcal{R}_M$. Moreover, $U(\mathcal{R}_M) \cong \widehat{G(\mathcal{R}_M)}$.
\end{proposition}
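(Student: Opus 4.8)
The plan is to adapt the argument of \cite[Theorem 6.3]{nilpotent} for modular categories to the purely combinatorial setting of a modular datum. Throughout I write $d_i := s_{0i}$ and, for each $j\in I$, I let $\chi_j\colon \mathcal{R}_M\to\mathbb{C}$ denote evaluation at the label $j$, so that $\chi_j(b_i)=b_i(j)=s_{ij}/s_{0j}$. Two facts about the $\chi_j$ will be used repeatedly: they are $\mathbb{C}$-algebra homomorphisms (this is exactly axiom (2), i.e.\ the Verlinde formula), and they jointly separate points of $\mathcal{R}_M\otimes_{\mathbb{Z}}\mathbb{C}$, since the matrix $[\chi_i(b_k)]_{i,k}$ equals $\operatorname{diag}(d_i^{-1})\,S$ and $S$ is invertible by axiom (4).

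First I would check that $|-|$ lands in characters and is a grading. For a unit $b_j$ one has $\chi_i(b_j)\chi_i(b_{j^*})=\chi_i(b_0)=1$, so $\chi_i(b_j)\neq 0$, hence $s_{ij}\neq 0$ and $|b_i|(b_j)=s_{ij}/(s_{0i}s_{0j})$ is a nonzero scalar. Fixing $i$ and taking units $b_j,b_k$ with $b_jb_k=b_m$, I would evaluate the identity $b_jb_k=b_m$ of $\Fun(I,\mathbb{C})$ at the labels $i$ and $0$ to get $s_{ij}s_{ik}=d_i s_{im}$ and $d_jd_k=d_m$; these combine to $|b_i|(b_j)\,|b_i|(b_k)=|b_i|(b_m)$, and $|b_i|(b_0)=1$, so $|b_i|$ is a homomorphism $G(\mathcal{R}_M)\to\mathbb{C}^*$, automatically valued in roots of unity. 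Finally, $|-|$ respects fusion: if $N_{il}^k\neq 0$ then $|b_k|=|b_i|\,|b_l|$ reads, on a unit $b_j$, as $\frac{s_{kj}}{d_kd_j}=\frac{s_{ij}}{d_id_j}\frac{s_{lj}}{d_ld_j}$, which is precisely the standing hypothesis. This is the only point at which the hypothesis is invoked.

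Next I would prove faithfulness and the isomorphism. Let $H\subseteq\widehat{G(\mathcal{R}_M)}$ be the image of $|-|$; it is a subgroup (closed under products since $|-|$ is a grading, and $G(\mathcal{R}_M)$ is finite), and by duality of finite abelian groups $H=\widehat{G(\mathcal{R}_M)}$ iff its annihilator $H^{\perp}=\{b_j\in G(\mathcal{R}_M): |b_i|(b_j)=1 \text{ for all } i\}$ is trivial. If $b_j\in H^{\perp}$ then $s_{ij}=d_id_j$ for all $i$, i.e.\ $\chi_i(b_j)=d_j=\chi_i(d_jb_0)$ for all $i$; since the $\chi_i$ separate points, $b_j=d_jb_0$ in $\mathcal{R}_M\otimes\mathbb{C}$, and comparing coefficients in the distinguished basis $\mathbf{B}$ forces $j=0$. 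Thus $H^{\perp}=1$ and the grading is faithful. For $U(\mathcal{R}_M)\cong\widehat{G(\mathcal{R}_M)}$, the universal property factors $|-|$ as $\bar\alpha\circ q$ with $q\colon\mathbf{B}\to U:=U(\mathcal{R}_M)$ the universal grading and $\bar\alpha\colon U\twoheadrightarrow\widehat{G(\mathcal{R}_M)}$ surjective. I would then consider the bi-multiplicative pairing $P\colon G(\mathcal{R}_M)\times U\to\mathbb{C}^*$, $P(b_j,g)=\bar\alpha(g)(b_j)$: its left kernel is trivial because $\bar\alpha$ is onto, so $b_j\mapsto P(b_j,-)$ embeds $G(\mathcal{R}_M)$ into $\widehat{U}$ and $|G(\mathcal{R}_M)|\le|U|$; together with $|U|\ge|\widehat{G(\mathcal{R}_M)}|=|G(\mathcal{R}_M)|$ from surjectivity of $\bar\alpha$, this gives $|U|=|G(\mathcal{R}_M)|$, whence $\bar\alpha$ is an isomorphism.

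The step I expect to be the real obstacle is the faithfulness argument — more precisely, the passage from ``$b_j$ is invisible to every $|b_i|$'' to ``$b_j=d_jb_0$''. This is the one place where the fusion-ring data alone is insufficient and the full modular-datum axioms must enter, via invertibility of $S$ (equivalently, the $\chi_i$ separating points). Everything else is bookkeeping with the Verlinde formula and the hypothesis, plus the elementary duality-and-counting argument in the last step, which here replaces the M\"uger double-centralizer input used in the categorical proof of \cite{nilpotent}.
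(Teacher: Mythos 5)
Your construction of the grading and your faithfulness argument are fine: evaluation at each label is a ring homomorphism by axiom (2), the hypothesis gives exactly $|b_k|=|b_i|\,|b_l|$ whenever $N_{il}^k\neq 0$, and the annihilator computation (a unit $b_j$ with $s_{ij}=d_id_j$ for all $i$ is the constant function $d_jb_0$, hence $j=0$ by invertibility of $S$) correctly yields surjectivity of $|-|$ onto $\widehat{G(\mathcal{R}_M)}$. Up to that point you are running the same argument as the paper's source \cite[Theorem 6.2]{nilpotent}, phrased numerically.

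The genuine gap is in the last step, the claim $U(\mathcal{R}_M)\cong \widehat{G(\mathcal{R}_M)}$. Your two estimates are the \emph{same} inequality: the pairing $P(b_j,g)=\bar\alpha(g)(b_j)$ with trivial left kernel embeds $G(\mathcal{R}_M)$ into $\operatorname{Hom}(U,\mathbb{C}^*)$, giving $|G(\mathcal{R}_M)|\le |U|$, while surjectivity of $\bar\alpha$ also gives $|U|\ge |\widehat{G(\mathcal{R}_M)}|=|G(\mathcal{R}_M)|$. Neither bounds $|U|$ from above, so the conclusion $|U|=|G(\mathcal{R}_M)|$ does not follow; injectivity of $\bar\alpha$ is never proved. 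Concretely, what is missing is the statement that any basis element $b_i$ with $s_{ij}=d_id_j$ for all units $j$ (equivalently $|b_i|$ trivial) lies in the trivial component $\mathcal{R}_{\mathrm{ad}}$ of the universal grading. This is exactly the numerical shadow of M\"uger's centralizer theory, $\mathcal{C}_{\mathrm{pt}}'=\mathcal{C}_{\mathrm{ad}}$ together with the dimension count $\dim\mathcal{C}_{\mathrm{pt}}\cdot\dim\mathcal{C}_{\mathrm{ad}}=\dim\mathcal{C}$, which is how \cite{nilpotent} (and hence the proof the paper appeals to) obtains $|U(\mathcal{C})|=|G(\mathcal{C})|$; it is not a consequence of abstract duality for finite abelian groups, so your proposed ``elementary duality-and-counting'' replacement does not close the argument. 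To repair the proof in the purely combinatorial setting you would need an axioms-only analogue of that centralizer/dimension statement (e.g.\ an orthogonality argument with the Verlinde formula showing that the labels with trivial character are precisely those occurring in some $b_mb_{m^*}$), and this — not faithfulness — is the step where the full strength of the modular-datum axioms must enter.
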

\qed

\section{Zesting of Modular Data}\label{section:zesting}

\subsection{Preliminaries on group cohomology}

In this section, we recall some basic definitions on group cohomology, abelian group cohomology, and the cup product in order to fix notation.

\subsubsection{Group cohomology}
To establish notation to the standard cocycle description of group cohomology, we present basic definitions in this section. For a detailed exposition, we refer the reader to \cite{KARPY}.

Given a group $G$ and an abelian group $B$, let $\big (C^n(G,B), d \big )$ be the cochain complex 
\[
0\to C^0(G,B)\to C^1(G,B)\to C^2(G,B)\to \cdots \to C^n(G,B)\to \cdots,
\]
with $C^0(G,B) = B$. Here, $C^n(G,B)$ represents the abelian group of all maps from $G^{\times n}$ to $B$, and the differential $d^n: C^n(G,B) \to C^{n+1}(G,B)$ is defined as
\begin{align}
d^n(f)(g_1,\ldots, g_{n+1}) &= f(g_2,\ldots,g_{n+1}) + \sum_{i=1}^{n} (-1)^i f(g_1,\ldots,g_ig_{i+1},\ldots,g_{n+1}) \label{cochains G} \\
&\quad +(-1)^{n+1} f(g_1,\ldots,g_{n}). \notag 
\end{align}

The \emph{group cohomology} of $G$ with coefficients in $B$ is the cohomology derived from the cochain complex \eqref{cochains G}. Specifically, for each integer $n$, we set 
\[
H^n(G,B) := \frac{\ker(d^n)}{\operatorname{Im}(d^{n-1})}.
\]
Here, $Z^n(G,B) := \ker(d^n)$ refers to the group of $n$-cocycles, and $B^n(G,B) := \operatorname{Im}(d^{n-1})$ designates the group of $n$-coboundaries.

\subsubsection{The cup product in group cohomology}
We introduce a specialized cup product in group cohomology, essential for defining zesting. 

Let $G$ be a group and $B, A$ be abelian groups with a bilinear map $c: B \times B \to A$. The wedge product, or preliminary cup product, is defined as:
\begin{align*}
\alpha \wedge_c \beta: C^i(G,B) \otimes_{\mathbb{Z}} C^j(G,B) &\to C^{i+j}(G,A) \\
\alpha \otimes \beta &\mapsto c(\alpha(g_1, \ldots, g_i), \beta(g_{i+1}, \ldots, g_{i+j})).
\end{align*}

The key identity for the wedge product is:
\begin{equation*}
d^{i+j}(\alpha \wedge_c \beta) = d^i(\alpha) \wedge_c \beta + (-1)^i \alpha \wedge_c d^j(\beta),
\end{equation*}
which ensures the well-definedness of the cup product:
\begin{align}\label{eq: cup product}
\cup_c: H^i(G,B) \otimes_{\mathbb{Z}} H^j(G,B) &\to H^{i+j}(G,A),
\end{align}
where $\overline{\alpha} \cup_c \overline{\beta} := \overline{\alpha \wedge_c \beta}$.

If $c$ is symmetric, we have:
\begin{equation*}
\overline{\alpha} \cup_c \overline{\beta} = (-1)^{ij} \overline{\beta} \cup_c \overline{\alpha}.
\end{equation*}

\subsubsection{Abelian group cohomology}

For two abelian groups $A$ and $B$, the abelian cohomology groups $H^n_{\text{ab}}(A, B)$ are defined as $H^n(K(A, 2), B)$, see \cite{MR0065162}. We will recall the standard cocycle description of $H^3_{\text{ab}}(A, B)$: the abelian group of abelian 3-cocycles is

\begin{align*}
    Z^3_{\text{ab}}(A, B) &= \Big \{ (\omega, t) \in C^3(A, B) \oplus C^2(A, B) : \\
    &\quad d^4(\omega) = 0, \\
    &\quad t(y, z) - t(xy, z) + t(x, z) + \omega(x, y, z) - \omega(x, z, y) + \omega(z, x, y) = 0, \\
    &\quad t(x, y) - t(x, yz) + t(x, z) - \omega(x, y, z) + \omega(y, x, z) - \omega(y, z, x) = 0 \Big\},
\end{align*}

with differential map $\partial: C^2_{\text{ab}}(A, B) \to Z^3_{\text{ab}}(A, B)$ defined as follows:
\begin{align*}
    \partial(\kappa)(x, y, z) &= \kappa(y, z) - \kappa(xy, z) + \kappa(x, yz) - \kappa(x, y), \\
    \partial(\kappa)(x, y) &= \kappa(y, x) - \kappa(x, y).
\end{align*}

The cohomology group $H^3_{\text{ab}}(A, B)$ is defined as the quotient $Z^3_{\text{ab}}(A, B) / \operatorname{Im}(\partial)$.

It has been shown in \cite{MR0065162} that there exists a group  isomorphisms:
\[
    H^3_{\mathrm{ab}}(A, B) \cong \mathrm{Quad}(A, B),
\]
given by $(\omega,t) \mapsto q$, where $q(x) = t(x,x)$ for all $x \in A$.

Here, $\mathrm{Quad}(A, B)$ denotes the group of quadratic maps, i.e., maps $q:A \to B$ such that $q(x) + q(y) - q(x + y)$ defines a bilinear map from $A \times A$ to $B$ and satisfies $q(x) = q(-x)$ for all elements $x$ in $A$.

Quadratic forms over cyclic groups can be readily described, and their associated abelian 3-cocycles are easily constructed. Specifically, 

\begin{equation}\label{eq: abelian 3 cohomology of cyclic groups}
H^3_{\text{ab}}(\mathbb Z/N,B)\cong  \mathrm{Quad}(\mathbb{Z}/N, B) \overset{(*)}{\cong} \{ \nu \in B : N^2\nu = 2N\nu = 0 \},
\end{equation}
and the corresponding abelian 3-cocycle is given by:
\[
t_\nu(x, y) = xy\nu, \quad \omega_\nu(x, y, z) = 
\begin{cases} 
xN\nu, & \text{if } y + z \geq N, \\
0, & \text{otherwise.}
\end{cases}
\]
The bijections in \eqref{eq: abelian 3 cohomology of cyclic groups} are group isomorphisms, and the second isomorphism  $(*): \mathrm{Quad}(\mathbb{Z}/N, B) \to \{ \nu \in B : N^2 \nu = 2N \nu = 0 \}$ is simply $\lambda \mapsto \lambda(1)$.

For an arbitrary finite abelian group, we can use \cite[Proposition 4.3.]{Galindo-Jaramillo}, which states that for abelian groups $A_1$ and $A_2$, the map
\begin{align*}
    T : \text{Bil}(A_1, A_2; B) &\oplus \text{Quad}(A_1, B) \oplus \text{Quad}(A_2, B) \to \text{Quad}(A_1 \oplus A_2, B) \\
    f \oplus \gamma_{A_1} \oplus \gamma_{A_2} &\mapsto \left[ (a, b) \mapsto f(a , b) + \gamma_{A_1}(a) + \gamma_{A_2}(b) \right],
\end{align*}
is a group isomorphism. Here $\text{Bil}(A_1, A_2; B)$ is the abelian group of all bilinear maps from $A_1\times A_2$ to $B$.

Using the previous results, it is easy both to calculate the third abelian cohomology group and to find explicit representatives. 
The following result, although just an example, will be important for the parametrization of braided zesting of certain Verlinde categories.

\begin{proposition}\label{prop:abelian3CocycleOfKleinGroup}
We have
\begin{align*}
    H^3_{\text{ab}}(\mathbb{Z}/2 \times \mathbb{Z}/2, \mathbb{C}^*) = \mathbb{Z}/4 \oplus \mathbb{Z}/4 \oplus \mathbb{Z}/2.
\end{align*}
Representative abelian 3-cocycles are given by
\begin{equation}\label{eq: abelian 3-cocycles of klein}
\begin{aligned}
    \omega_{a,b,c}(\vec{x},\vec{y},\vec{z}) &= (-1)^{ax_1y_1z_1+bx_2y_2z_2}, \\
    t_{a,b,c}(\vec{x},\vec{y}) &= \zeta_4^{ax_1y_1+bx_2y_2}(-1)^{cx_1y_2}
\end{aligned}
\end{equation}
where $(a,b,c) \in \mathbb{Z}/4 \oplus \mathbb{Z}/4 \oplus \mathbb{Z}/2$ and $\zeta_4$ is a fourth root of unity.
\end{proposition}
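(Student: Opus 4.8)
The plan is to reduce the computation to the already-stated building blocks: the cyclic case \eqref{eq: abelian 3 cohomology of cyclic groups}, the decomposition isomorphism $T$ from \cite{Galindo-Jaramillo}, and the identification $H^3_{\text{ab}}(A,B)\cong \mathrm{Quad}(A,B)$ from \cite{MR0065162}. First I would invoke the latter to replace the problem ``compute $H^3_{\text{ab}}(\mathbb{Z}/2\times\mathbb{Z}/2,\mathbb{C}^*)$'' with ``compute $\mathrm{Quad}(\mathbb{Z}/2\times\mathbb{Z}/2,\mathbb{C}^*)$''. Then I would apply $T$ with $A_1=A_2=\mathbb{Z}/2$, giving
\[
\mathrm{Quad}(\mathbb{Z}/2\oplus\mathbb{Z}/2,\mathbb{C}^*)\cong \mathrm{Bil}(\mathbb{Z}/2,\mathbb{Z}/2;\mathbb{C}^*)\oplus \mathrm{Quad}(\mathbb{Z}/2,\mathbb{C}^*)\oplus \mathrm{Quad}(\mathbb{Z}/2,\mathbb{C}^*).
\]
For the quadratic summands, the cyclic formula \eqref{eq: abelian 3 cohomology of cyclic groups} with $N=2$ gives $\mathrm{Quad}(\mathbb{Z}/2,\mathbb{C}^*)\cong\{\nu\in\mathbb{C}^*: \nu^4=\nu^4=1\}=\mu_4\cong\mathbb{Z}/4$ (here $N^2=4$ and $2N=4$ coincide). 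For the bilinear summand, $\mathrm{Bil}(\mathbb{Z}/2,\mathbb{Z}/2;\mathbb{C}^*)\cong \mathbb{Z}/2$, since a bilinear map is determined by the value on $(1,1)$, which must be a square root of unity. Adding these gives $\mathbb{Z}/4\oplus\mathbb{Z}/4\oplus\mathbb{Z}/2$, proving the group-theoretic assertion.

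Next I would verify that the explicit formulas \eqref{eq: abelian 3-cocycles of klein} are genuine abelian 3-cocycles representing the claimed classes. This amounts to tracing the representatives through the three isomorphisms above. The cyclic representatives from the paragraph preceding the proposition are $t_\nu(x,y)=\nu^{xy}$ and $\omega_\nu(x,y,z)=\nu^{2x}$ when $y+z\ge 2$ (i.e.\ $y=z=1$) and $1$ otherwise; writing $\nu=\zeta_4^a$ for the first $\mathbb{Z}/2$ factor and $\nu=\zeta_4^b$ for the second, and noting $\nu^2=\zeta_4^{2a}=(-1)^a$, these contribute $\zeta_4^{ax_1y_1}$, $\zeta_4^{bx_2y_2}$ to $t$ and $(-1)^{ax_1y_1z_1}$, $(-1)^{bx_2y_2z_2}$ to $\omega$ (using that $x_i y_i z_i=1$ exactly when $y_i+z_i\ge 2$ and $x_i=1$). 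The bilinear class $c\in\mathbb{Z}/2$ contributes a purely ``$t$-type'' cocycle $t(x,y)=(-1)^{cx_1y_2}$ with $\omega=1$, which one checks satisfies the two mixed pentagon-type identities in the definition of $Z^3_{\text{ab}}$ because the associated quadratic form $q(\vec x)=t(\vec x,\vec x)=(-1)^{cx_1x_2}$ has $q(\vec x)+q(\vec y)-q(\vec x+\vec y)$ equal to the symmetrization, i.e.\ the original bilinear form. Combining the three pieces additively (multiplicatively in $\mathbb{C}^*$) via $T$ yields exactly \eqref{eq: abelian 3-cocycles of klein}.

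The routine-but-necessary step is confirming that $(\omega_{a,b,c},t_{a,b,c})$ satisfies the two hexagon-type equations and the condition $d^4(\omega)=0$ in the definition of $Z^3_{\text{ab}}(A,B)$; since each summand does (the cyclic ones by \eqref{eq: abelian 3 cohomology of cyclic groups}, the bilinear one by a direct check) and $Z^3_{\text{ab}}$ is closed under the (pointwise) sum, this is immediate, but I would state it. Finally, one should observe that the resulting classes are pairwise distinct and exhaust the group: this follows because applying the isomorphism $(\omega,t)\mapsto q(x)=t(x,x)$ sends $(\omega_{a,b,c},t_{a,b,c})$ to the quadratic form $\vec x\mapsto \zeta_4^{ax_1+bx_2}(-1)^{cx_1x_2}$ (using $x_i^2=x_i$ in $\mathbb{Z}/2$), and the triples $(a,b,c)\in\mathbb{Z}/4\oplus\mathbb{Z}/4\oplus\mathbb{Z}/2$ parametrize these forms bijectively by the $T$-decomposition.

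The main obstacle is bookkeeping rather than conceptual: one must be careful about (i) the interaction of additive notation on $\mathbb{Z}/2$ with multiplicative notation on $\mathbb{C}^*$, (ii) the fact that for $N=2$ the two conditions $N^2\nu=0$ and $2N\nu=0$ coincide so that the cyclic group is $\mathbb{Z}/4$ and not $\mathbb{Z}/2$, and (iii) getting the cross-term $(-1)^{cx_1y_2}$ (rather than a symmetric $(-1)^{c(x_1y_2+x_2y_1)}$) to sit correctly inside $t$ while keeping $\omega$ unchanged — i.e.\ checking that the ``off-diagonal'' bilinear contribution really can be realized with trivial $\omega$-component. Once these conventions are pinned down, the verification is a short direct computation.
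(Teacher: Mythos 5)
Your proposal is correct and follows essentially the route the paper intends: the paper states this proposition without a written proof precisely because it is meant to follow from the Eilenberg--Mac\,Lane identification $H^3_{\text{ab}}(A,\mathbb{C}^*)\cong \mathrm{Quad}(A,\mathbb{C}^*)$, the cyclic computation \eqref{eq: abelian 3 cohomology of cyclic groups} with $N=2$, and the decomposition of $\mathrm{Quad}(A_1\oplus A_2,B)$ from \cite[Proposition 4.3]{Galindo-Jaramillo}, which is exactly what you use. Your tracing of the representatives (the two pulled-back cyclic cocycles giving the $\zeta_4^{ax_iy_i}$ and $(-1)^{ax_iy_iz_i}$ terms, and the off-diagonal bicharacter $(-1)^{cx_1y_2}$ with trivial $\omega$-component, distinguished via the diagonal quadratic form) correctly fills in the details the paper leaves implicit.
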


\qed

\subsection{Associative zesting data}

Consider a modular category $\mathcal{B}$. In this section, we want to adapt Definition 3.2 in \cite{zesting} of associative $U(\mathcal{B})$-zesting to modular data. For this purpose, both $\mathcal{B}_{\text{pt}}$, the category of invertible objects in $\mathcal{B}$, and the subcategory of invertible objects in the trivial component of the $U(\mathcal{B})$-grading, that is, in $\mathcal{B}_{\text{ad}}$, are the only ones that play a role in the process. We will detail these categories carefully before presenting the main definition.

It was shown in \cite[Corollary 8.22.8]{EGNO} that $\mathcal{B}_{\text{ad}} = \mathcal{B}_{\text{pt}}'$, where $\mathcal{B}_{\text{ad}}$ is the adjoint fusion subcategory and $\mathcal{B}_{\text{pt}}'$ is the full 
subcategory of objects $X$ in $\mathcal{B}$ such that $c_{Y,X} \circ c_{X,Y} = \id_{X \otimes Y}$ 
for all $Y \in \mathcal{B}_{\text{pt}}$. 
This implies that $\mathcal{B}_{\text{ad}} \cap \mathcal{B}_{\text{pt}}$, the maximal pointed subcategory of $\mathcal{B}_{\text{ad}}$, is a pointed symmetric fusion category and corresponds to 
\[
\mathcal{Z}_2(\mathcal{B}_{\text{pt}}) = \{X \in \mathcal{B}_{\text{pt}} \mid c_{Y,X} \circ c_{X,Y} = \id_{X \otimes Y} \text{ for all } Y \in \mathcal{B}_{\text{pt}} \}.
\]

Every pointed symmetric fusion category, up to equivalence, has the form $\text{Vec}_{A_0}^\nu$, where $A_0$ is an abelian group and $\nu: A_0 \to \{\pm 1\}\cong \mathbb{Z}/2$ is a group homomorphism that corresponds to the ribbon structure. The braiding is given by the symmetric bicharacter
\begin{equation}\label{eq: definition bicharacter symmetric}
c_\nu(a, b) = (-1)^{\nu(a) \nu(b)} \id_{a \otimes b}, \quad \quad \nu(a), \nu(b)\in \{0,1\}.
\end{equation}
In particular, $\text{Vec}_{A_0}^\nu$ is Tannakian if $\nu$ is trivial and super-Tannakian if $\nu$ is non-trivial.

Consider a modular datum $M = (I, S, T)$. The abelian group $G(\mathcal{R}_M)$ of units of $\mathcal{R}_M$ with the quadratic form 
\begin{align*}
    q_T: G(\mathcal{R}_M) \to \mathbb{C}^* \\
    b_i \mapsto T_{ii}
\end{align*}
corresponds to the pointed braided category $\mathcal{B}_{\text{pt}}$. Then 
\[G(\mathcal{R}_M)_0 = \{b_i \in G(\mathcal{R}_M) \mid q_T(b_i b_j) = q_T(b_i) q_T(b_j) \text{ for all } b_j \in G(\mathcal{R}_M)\}\]
corresponds to the pointed symmetric fusion category $\mathcal{B}_{\text{ad}} \cap \mathcal{B}_{\text{pt}}$. Here, the quadratic form $q_T$ restricted to $G(\mathcal{R}_M)_0$ is a group homomorphism to $\{\pm 1\}$ and it has a canonical symmetric bicharacter 
\[c_T: G(\mathcal{R}_M)_0 \times G(\mathcal{R}_M)_0 \to \{\pm 1\}\]
defined as in equation \eqref{eq: definition bicharacter symmetric} such that $c_T(b_i,b_i)=T_q(b_i)=T_{ii}$ for all $b_i\in G(\mathcal{R}_M)_0$.

\begin{definition}\label{def: premetirc group A0 y c}
\begin{itemize}
    \item A \emph{pre-metric group} is a pair $(A, q)$ where $A$ is a finite abelian group and $q: A \to \mathbb{C}^*$ is a map such that the symmetric function $b(x, y) := \frac{q(x) q(y)}{q(xy)}$ is a bicharacter and $q(x) = q(x^{-1})$ for all $x \in A$.
    \item The kernel of the bicharacter $b$ is denoted by $A_0$, that is 
    \[A_0 := \{x \in A \mid q(xy) = q(x) q(y) \text{ for all } y \in A\}.\]
    \item A symmetric bicharacter on $A_0$ is defined by
    \begin{equation}\label{eq: bicharacter symmetric}
    c_q(x, y) = (-1)^{\widehat{q}(x) \widehat{q}(y)}, \quad x, y \in A_0,
    \end{equation}
    where $\widehat{q}(x) = 1$ if $q(x) = -1$ and $\widehat{q}(x) = 0$ if $q(x) = 1$.
\end{itemize}
In general, to simplify notation, we will write $c$ instead of $c_q$, though it is canonically defined by $(A,q)$.
\end{definition}

The following definition is adapted from \cite[Definitions 3.2, 4.1]{zesting}.

\begin{definition}
Let $(A, q)$ be a pre-metric group. An \emph{associative zesting datum} of $(A, q)$ consists of a pair $(\lambda, \omega)$. Here, $\lambda \in Z^2(\widehat{A}, A_0)$ is a 2-cocycle and $\omega \in C^3(\widehat{A}, \mathbb{C}^*)$ is a 3-cochain, satisfying:
\begin{equation*}\label{eq:BZ1}
    d^3(\omega) = \lambda \wedge_c \lambda,
\end{equation*}
where $\lambda \wedge_c \lambda(\chi_1, \chi_2, \chi_3, \chi_4) = c(\lambda(\chi_1, \chi_2), \lambda(\chi_3, \chi_4))$. Here, the subgroup $A_0$ and the bicharacter $c$ are as in Definition \ref{def: premetirc group A0 y c}.
\end{definition}

\begin{remark}
Given a modular category $\mathcal{B}$, we have an associated pre-metric group corresponding to the pointed braided fusion category $\mathcal{B}_{\text{pt}}$. The associative zestings of the pre-metric group of $\mathcal{B}_{\text{pt}}$ are in non-canonical bijective correspondence with associative $U(\mathcal{B})$-zestings of $\mathcal{B}$, as defined in \cite[Definition 3.2]{zesting}. Here, using the notation of \cite{zesting}, we have $\mathcal{B}_e \subset R_{\mathcal{B}_e}(\mathcal{B})$ via the braiding.
\end{remark}

Observe that if $(\lambda, \omega)$ is an associative zesting datum and $\lambda' = \lambda d^1(\psi)$ for some $\psi \in C^1(\widehat{A}, A_0)$, then the following holds:
\[
\lambda' \wedge_c \lambda' = \lambda \wedge_c \lambda \times d^3\left( \lambda \wedge_c \psi \times \psi \wedge_c \lambda' \right).
\]
Consequently, the pair $(\lambda', \omega \times \lambda \wedge_c \psi \times \psi \wedge_c \lambda')$ also forms an associative zesting datum. Motivated by this observation, we introduce the following definition of equivalence between associative zesting data.

\begin{definition}
Let $(A, q)$ be a pre-metric group. Two associative zesting data $(\lambda, \omega)$ and $(\lambda', \omega')$ are said to be equivalent if there exists a pair $(\psi, \kappa)$, where $\psi \in C^1(\widehat{A}, A_0)$ and $\kappa \in C^2(\widehat{A}, \mathbb{C}^*)$, such that $d^1(\psi) = \frac{\lambda'}{\lambda}$ and $d^2(\kappa) = \frac{\omega'}{\omega \times \lambda \wedge_c \psi \times \psi \wedge_c \lambda'}$.

We will denote by $\mathbf{Sez}(A, q)$ the set of equivalence classes of associative zesting data of $(A, q)$.
\end{definition}

 A first observation that will be important for the classification and computation of all zesting data is that the abelian group $H^3(\widehat{A}, \mathbb{C}^*)$ acts freely on $\mathbf{Sez}(A,q)$. In fact, if $\tau \in Z^3(\widehat{A}, \mathbb{C}^*)$ and $(\lambda, \omega)$ is an associative zesting datum, then
\[
\tau \cdot (\lambda, \omega) = (\lambda, \omega  \tau)
\]
defines an action of $Z^3(\widehat{A},\mathbb C^*)$ on the set of associative zesting data that descends to a free action of $H^3(\widehat{A},\mathbb C^*)$  on equivalence classes.

The proof of the following proposition follows directly from the definition; however, it is important for the construction and classification of zesting up to equivalence.

\begin{proposition}
Let $(A,q)$ be a pre-metric group. We have an exact sequence of pointed sets
\begin{equation*}\label{eq: exact sequence associative zesting}
0 \to H^3(\widehat{A}, \mathbb{C}^*) \to \mathbf{Sez}(A,q) \stackrel{pr_1}{\to} H^2(\widehat{A}, A_0) \stackrel{\cup_c }{\to} H^4(\widehat{A}, \mathbb{C}^*),
\end{equation*}
inducing a short exact sequence
\begin{equation}\label{eq: sequence associ zesting}
0 \to H^3(\widehat{A}, \mathbb{C}^*) \to \mathbf{Sez}(A,q) \to \operatorname{ker}(\cup_c) \to 0,
\end{equation}
where \( H^3(\widehat{A}, \mathbb{C}^*) \), \( H^2(\widehat{A}, A_0) \), and \( H^4(\widehat{A}, \mathbb{C}^*) \) are pointed by their identities. The set \( \mathbf{Sez}(A,q) \) is pointed by the class of the trivial zesting datum. The map \( \cup_c \) is defined in \eqref{eq: cup product}, and
\[
\operatorname{ker}(\cup_c) = \{ \lambda \in H^2(\widehat{A}, A_0) : \lambda \cup_c \lambda = 0 \}.
\]
Thus, the set of \( H^3(\widehat{A}, \mathbb{C}^*) \)-orbits in \( \mathbf{Sez}(A,q) \), denoted \( \mathbf{Sez}(A,q)/H^3(\widehat{A}, \mathbb{C}^*) \), can be identified with \( \operatorname{ker}(\cup_c) \).
\end{proposition}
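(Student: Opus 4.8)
The plan is a formal chase through the defining identities; beyond ordinary group cohomology the only extra structure is the quadratic constraint coming from $\cup_c$ and the Leibniz rule $d^{i+j}(\alpha\wedge_c\beta)=d^i(\alpha)\wedge_c\beta+(-1)^i\alpha\wedge_c d^j(\beta)$, together with the trivial but crucial observation that, $c$ being bilinear, $\alpha\wedge_c\beta$ equals the trivial cochain as soon as $\alpha$ or $\beta$ is trivial. First I would pin down the two maps. The map $pr_1$ sends the class of a datum $(\lambda,\omega)$ to $[\lambda]\in H^2(\widehat A,A_0)$; it is well defined because in the equivalence relation $\lambda$ and $\lambda'$ differ by a coboundary $d^1(\psi)$. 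The map $H^3(\widehat A,\mathbb C^*)\to\mathbf{Sez}(A,q)$ is the orbit map of the recorded $H^3$-action at the base point, $[\tau]\mapsto[(\text{triv},\tau)]$: since $\text{triv}\wedge_c\psi$ and $\psi\wedge_c\text{triv}$ are trivial, two data $(\text{triv},\tau_1)$ and $(\text{triv},\tau_2)$ (both of which are automatically data, as $d^3(\tau_i)=\text{triv}\wedge_c\text{triv}$ is trivial) are equivalent if and only if $\tau_2/\tau_1\in B^3(\widehat A,\mathbb C^*)$, which gives simultaneously well-definedness and injectivity of this map, i.e.\ exactness at $H^3(\widehat A,\mathbb C^*)$.

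Next I would prove exactness at $H^2(\widehat A,A_0)$. For any datum $d^3(\omega)=\lambda\wedge_c\lambda$, so $\lambda\wedge_c\lambda$ is a coboundary; since $d^2\lambda=0$ the Leibniz rule gives $d^4(\lambda\wedge_c\lambda)=0$, hence $[\lambda]\cup_c[\lambda]=[\lambda\wedge_c\lambda]=0$ in $H^4(\widehat A,\mathbb C^*)$ and $\operatorname{im}(pr_1)\subseteq\ker(\cup_c)$. Conversely, if $[\lambda]\in\ker(\cup_c)$ then the $4$-cocycle $\lambda\wedge_c\lambda$ is a coboundary $d^3(\omega)$ for some $\omega\in C^3(\widehat A,\mathbb C^*)$, so $(\lambda,\omega)$ is a datum with $pr_1([(\lambda,\omega)])=[\lambda]$. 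Thus $pr_1$ surjects onto $\ker(\cup_c)$, which is exactness at $H^2(\widehat A,A_0)$ and also produces the surjection $\mathbf{Sez}(A,q)\to\ker(\cup_c)$ in the short exact sequence.

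Then comes exactness at $\mathbf{Sez}(A,q)$, the interesting step. Given a datum $(\lambda,\omega)$ with $[\lambda]=0$, write $\lambda=d^1(\psi)$ and set $\omega'':=\omega\cdot(\lambda\wedge_c\psi^{-1})$. Using $d^3(\lambda\wedge_c\psi^{-1})=\lambda\wedge_c d^1(\psi^{-1})=(\lambda\wedge_c\lambda)^{-1}$ one checks $d^3(\omega'')$ is trivial, so $(\text{triv},\omega'')$ is a datum, and the pair $(\psi^{-1},\text{triv})$ exhibits an equivalence $(\lambda,\omega)\sim(\text{triv},\omega'')$ (the correction term $\psi^{-1}\wedge_c\text{triv}$ is trivial). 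Hence $[(\lambda,\omega)]$ lies in the image of $[\omega'']$ under the $H^3$-map; conversely that image is carried to the base point by $pr_1$ by construction, so $\ker(pr_1)=\operatorname{im}\big(H^3(\widehat A,\mathbb C^*)\to\mathbf{Sez}(A,q)\big)$. Combined with the injectivity above and the surjection from the previous paragraph, this yields the short exact sequence $0\to H^3(\widehat A,\mathbb C^*)\to\mathbf{Sez}(A,q)\to\ker(\cup_c)\to0$. Finally, for the orbit statement: $pr_1$ is constant on $H^3$-orbits since $\tau\cdot(\lambda,\omega)=(\lambda,\omega\tau)$; conversely, if two classes have the same $pr_1$-image, pick $\psi$ with $\lambda'=\lambda\, d^1(\psi)$, replace $(\lambda,\omega)$ by the equivalent datum $(\lambda',\omega\cdot\lambda\wedge_c\psi\cdot\psi\wedge_c\lambda')$ recorded before the definition of equivalence, observe the two data now share the first component $\lambda'$, hence differ by a cochain $\tau$ which is a $3$-cocycle because both satisfy $d^3(-)=\lambda'\wedge_c\lambda'$, so they lie in one $H^3$-orbit. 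With freeness of the $H^3$-action (recorded in the excerpt), $pr_1$ descends to a bijection $\mathbf{Sez}(A,q)/H^3(\widehat A,\mathbb C^*)\cong\ker(\cup_c)$.

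I expect the main bookkeeping obstacle to be exactness at $\mathbf{Sez}(A,q)$: producing, from a datum whose $\lambda$ is cohomologically trivial, an \emph{equivalent} datum of the normal form $(\text{triv},\omega'')$ with $\omega''$ a genuine $3$-cocycle, while keeping the quadratic correction terms $\lambda\wedge_c\psi$ under control. Everything else reduces to the Leibniz identity for $\wedge_c$ and the vanishing of $\wedge_c$ against a trivial argument.
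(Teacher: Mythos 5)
Your proposal is correct, and it is exactly the argument the paper has in mind: the paper omits the proof, stating that it ``follows directly from the definition,'' and your direct verification (well-definedness of the two maps, the Leibniz identity giving $d^3(\lambda\wedge_c\psi^{-1})=(\lambda\wedge_c\lambda)^{-1}$ for the normal form $(\mathrm{triv},\omega'')$ at the kernel of $pr_1$, and the recorded observation $\lambda'\wedge_c\lambda'=\lambda\wedge_c\lambda\times d^3(\lambda\wedge_c\psi\times\psi\wedge_c\lambda')$ for the orbit identification) is precisely that definitional chase. All steps check out, including the key points of exactness at $\mathbf{Sez}(A,q)$ and the bijection of $H^3(\widehat{A},\mathbb{C}^*)$-orbits with $\operatorname{ker}(\cup_c)$.
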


\qed

\begin{example}
Assume that $\{\unit, f\}=A_0 = A = \mathbb{Z}/2$ with $\theta_f = -1$. In this case, $H^3(\mathbb{Z}/2, \mathbb{C}^*) = \mathbb{Z}/2$ and $\ker(\cup_c) = H^2(\mathbb{Z}/2, \mathbb{Z}/2) \cong \mathbb{Z}/2$. An associative zesting datum with a non-trivial 2-cocycle is given by  $\lambda(f, f) = 1$ and $\omega(f, f, f) = i$, where we are identifying $\widehat{A}$ and $A$.
\end{example}

\subsection{Braided zesting data}

\begin{definition}
Let $(A,q)$ be a pre-metric group. A \emph{braided zesting datum} of $(A,q)$ is a triple $(\lambda, \omega, t)$ where $(\lambda,\omega)$ is an associative zesting datum and $t \in C^2(\widehat{A}, \mathbb{C}^*)$ is 2-cochain satisfying the following conditions:

\begin{enumerate}
\item The 2-cocycle $\lambda \in Z^2(\widehat{A}, A_0)$ is symmetric, i.e.,
\begin{align*}
    \lambda(\chi_1, \chi_2) = \lambda(\chi_2, \chi_1),
\end{align*}
for all $\chi_1, \chi_2 \in \widehat{A}$.

\item 

\begin{equation}\label{eq:BZ2}
    \frac{\omega(\chi_1, \chi_2, \chi_3) \omega(\chi_2, \chi_3, \chi_1)}{\omega(\chi_2, \chi_1, \chi_3)} = \frac{t(\chi_1, \chi_2) t(\chi_1, \chi_3)}{t(\chi_1, \chi_2 \chi_3)},
\end{equation}

\begin{equation}\label{eq:BZ3}
    \chi_3(\lambda(\chi_1, \chi_2)) = \frac{\omega(\chi_1, \chi_2, \chi_3) \omega(\chi_3, \chi_1, \chi_2)}{\omega(\chi_1, \chi_3, \chi_2)} \frac{t(\chi_1, \chi_3) t(\chi_2, \chi_3)}{t(\chi_1 \chi_2, \chi_3)},
\end{equation}
for all $\chi_1, \chi_2, \chi_3 \in \widehat{A}$.
\end{enumerate}
\end{definition}

Observe that if $(\lambda, \omega, t)$ is a braided zesting datum of a pre-metric group $(A, q)$, and $\psi \in C^1(A, A_0)$, then $\lambda' = \lambda d^1(\psi)$ is also a symmetric 2-cocycle. Moreover, $\omega' = \omega \times \lambda \wedge_c \psi \times \psi \wedge_c \lambda'$ and $t' = t \times \psi \wedge_c \psi$ together form another braided zesting datum. Based on this observation, we introduce the following equivalence.

\begin{definition}\label{def: equivalent braided zestings}
Two braided zesting data $(\lambda, \omega,t)$ and $(\lambda', \omega',t')$ are said to be equivalent if there exists a pair $(\psi, \kappa)$ where $\psi \in C^1(\widehat{A}, A_0)$ and $\kappa \in C^2(\widehat{A}, \mathbb{C}^*)$ such that $\lambda' = \lambda \times d^1(\psi)$ and
\begin{align*}
    \frac{\omega'}{\omega \times \lambda \wedge_c \psi \times \psi \wedge_c \lambda'}(\chi_1, \chi_2, \chi_3) &= \frac{\kappa(\chi_1 \chi_2, \chi_3) \kappa(\chi_1, \chi_2)}{\kappa(\chi_1, \chi_2 \chi_3) \kappa(\chi_2, \chi_3)}, \\
    \frac{t'}{t \times \psi \wedge_c \psi}(\chi_1, \chi_2) &= \frac{\kappa(\chi_1, \chi_2)}{\kappa(\chi_2, \chi_1)}
\end{align*}
for all $\chi_1, \chi_2, \chi_3 \in \widehat{A}$.

We will denote by $\mathbf{Sez}_{\text{br}}(A,q)$ the set of equivalence classes of braided zesting data of $(A,q)$.
\end{definition}

\begin{remark}\label{Remark braided and abelian cocyles}
  The notion of braided zestings for a pre-metric group $(A,q)$ is closely related to the concept of abelian 3-cocycles. In fact, a braided zesting datum $(\lambda, \omega, t)$, where $\lambda$ is the trivial 2-cocycle, corresponds precisely to an \emph{abelian 3-cocycle} of $A$.  
\end{remark}

Similarly to the associative case, the abelian group $H^3_{\text{ab}}(\widehat{A}, \mathbb{C}^*)$ acts freely on $\mathbf{Sez}_{\text{br}}(A, q)$. Hence, there is a pointed set exact sequence

\begin{align}\label{short sequence}
0 \to H^3_{\text{ab}}(\widehat{A}, \mathbb{C}^*) \to \mathbf{Sez}_{\text{br}}(A, q) &\to H^2_{\operatorname{ab}}(\widehat{A}, A_0),\\
[(\lambda, \omega, t)] &\mapsto [\lambda] \notag
\end{align}
where $H^3_{\text{ab}}(\widehat{A}, \mathbb{C}^*)$ is the third abelian group cohomology group, and $H^2_{\operatorname{ab}}(\widehat{A}, A_0)$ is the second abelian group cohomology of symmetric 2-cocycles, or equivalently, $\operatorname{Ext}(\widehat{A}, A_0)$, the group of abelian extensions of $A$ by $A_0$.

Following \cite[Section 8.6]{davydov2021braided}, and in order to formulate a long exact sequence that extends \eqref{short sequence}, we introduce for abelian groups $A$ and $B$ the cohomology group $\widetilde{H}^4_{\text{ab}}(\widehat{A}, B) := \widetilde{Z}^4_{\text{ab}}(A, B) / \operatorname{Im}(\widetilde{\partial})$ (a subgroup of $H^4_{\text{ab}}(A, B)$) defined as follows:

\begin{align*}
\widetilde{Z}^4_{\text{ab}}(A, B) = \Big \{ (\alpha, \gamma) \in C^4(A, B) \oplus C^3(A, B) : 
\quad\quad\quad\quad\quad\quad\quad d^4(\alpha) &= 0, \\
\quad - \alpha(x, y, z, w) + \alpha(y, x, z, w) - \alpha(y, z, x, w) + \alpha(y, z, w, x) &= 0, \\
\gamma(x \mid z, w) - \alpha(x \mid yz, w) + \gamma(x \mid y, zw) - \gamma(x \mid y, z) \\
\alpha(x, y, w, z) - \alpha(x, y, z, w) - \alpha(w, y, z, x) &= 0, \\
\alpha(y \mid z, w) - \gamma(xy \mid z, w) + \gamma(x \mid yz, w) - \gamma(x \mid y, zw) \\
\quad + \alpha(x, y, z, w) - \alpha(z, x, w, y) + \alpha(z, w, x, y) &= 0, \\
\gamma(x \mid y, z) - \gamma(x \mid z, y) &= 0 \Big\},
\end{align*}

\begin{align*}
\widetilde{C}_{\text{ab}}^3(A, B) = \Big\{ (\omega, t) \in C^3(A, B) \oplus C^2(A, B) : & \\
t(y, z) - t(xy, z) + t(x, z) &= \omega(x, z, y) + \omega(x, y, z) - \omega(z, x, y) \Big\}.
\end{align*}
with differential map $\widetilde{\partial}: \widetilde{C}_{\text{ab}}^3(A, B) \to \widetilde{Z}^4_{\text{ab}}(A, B)$ defined as follows:
\begin{align*}
   \widetilde{\partial}(\omega) &= d^3(\omega), \\
   \widetilde{\partial}(\kappa)(x, y) &= t(x, y) - t(x, yz) + t(x, z) - \omega(x, y, z) + \omega(y, x, z) - \omega(y, z, x).
\end{align*}

Following the definition of the Pontryagin–Whitehead quadratic function given in \cite[Equation (8.44)]{davydov2021braided}, a group homomorphism
\[
PW^2: H^2_{\text{ab}}(\widehat{A}, A_0) \to \widetilde{H}^4_{\text{ab}}(A, \mathbb{C}^*)
\]
is defined as follows:

\begin{align*}
PW^2(\lambda) = \lambda \wedge_c \lambda,  &&
PW^2(\lambda)(\chi_1 , \chi_2, \chi_3) = \langle \chi_1, \lambda(\chi_2, \chi_3) \rangle,
\end{align*}
where $\langle \chi, a \rangle$ denotes the evaluation of $a \in A_0$ at $\chi \in \widehat{A}$.

Note that by the definition of $\widetilde{H}^4_{\text{ab}}(\widehat{A}, \mathbb{C}^*)$, we have $\lambda \in \operatorname{ker}(PW^2)\subset H^2_{\text{ab}}(\widehat{A}, A_0)$ if and only if $\lambda$ admits a braided zesting. This leads to the following result:

\begin{proposition}\label{prop: characterization of image}
Let $(A,q)$ be a pre-metric group. Then the pointed set of equivalence classes of braided zesting data fits into the following exact sequence:
\[
0 \to H^3_{\text{ab}}(\widehat{A}, \mathbb{C}^*) \to \mathbf{Sez}_{\text{br}}(A,q) \stackrel{pr_1}{\to} H^2_{\text{ab}}(\widehat{A}, A_0) \stackrel{PW^2}{\to} \widetilde{H}^4_{\text{ab}}(\widehat{A}, A_0).
\]
In particular, we obtain the short exact sequence 
\[
0 \to H^3_{\text{ab}}(\widehat{A}, \mathbb{C}^*) \to \mathbf{Sez}_{\text{br}}(A,q) \to \operatorname{ker}(PW^2) \to 0.
\]
\end{proposition}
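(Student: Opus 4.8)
The plan is to prove exactness of the four-term sequence at each of its three interior spots and then read off the short exact sequence. Throughout I work with $\widehat A$ and $A_0$ written multiplicatively and freely pass to additive notation for $\mathbb{C}^*$-valued cochains. Recall from the discussion preceding the statement that $H^3_{\text{ab}}(\widehat A,\mathbb{C}^*)$ acts on $\mathbf{Sez}_{\text{br}}(A,q)$ by $(\omega_0,t_0)\cdot(\lambda,\omega,t)=(\lambda,\omega\omega_0,tt_0)$; one checks this is well defined on equivalence classes and free. Injectivity of $H^3_{\text{ab}}(\widehat A,\mathbb{C}^*)\to\mathbf{Sez}_{\text{br}}(A,q)$ is precisely freeness of this action at the class of the trivial braided zesting datum $(\mathbf 1,1,1)$: by Remark~\ref{Remark braided and abelian cocyles} the data with $\lambda=\mathbf 1$ are exactly the abelian $3$-cocycles of $\widehat A$, and Definition~\ref{def: equivalent braided zestings} with $\psi=\mathbf 1$ reduces to the abelian-$3$-coboundary relation $\partial$, so the orbit of $(\mathbf 1,1,1)$ is faithfully a copy of $Z^3_{\text{ab}}/\operatorname{Im}(\partial)=H^3_{\text{ab}}(\widehat A,\mathbb{C}^*)$.

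\emph{Exactness at $\mathbf{Sez}_{\text{br}}(A,q)$.} The preimage $pr_1^{-1}(0)$ consists of classes $[(\lambda,\omega,t)]$ with $[\lambda]=0$ in $H^2_{\text{ab}}(\widehat A,A_0)=\operatorname{Ext}(\widehat A,A_0)$, i.e.\ with $d^1(\psi)=\lambda^{-1}$ for some $\psi\in C^1(\widehat A,A_0)$ (note $d^1(\psi)$ is automatically symmetric). Applying the equivalence of Definition~\ref{def: equivalent braided zestings} with this $\psi$ and $\kappa=1$ produces an equivalent datum $(\mathbf 1,\omega',t')$, which by Remark~\ref{Remark braided and abelian cocyles} is an abelian $3$-cocycle (for $\lambda=\mathbf 1$ the defining conditions specialize to $d^3(\omega')=1$ and the two abelian $3$-cocycle identities \eqref{eq:BZ2}, \eqref{eq:BZ3}). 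Two such $\lambda$-trivial data are equivalent in $\mathbf{Sez}_{\text{br}}(A,q)$ exactly when the witnessing $\psi$ has $d^1(\psi)=\mathbf 1$, i.e.\ $\psi\in\operatorname{Hom}(\widehat A,A_0)$; since $c$ is symmetric and $\psi$ is a homomorphism, $\psi\wedge_c\psi$ is a symmetric $2$-cochain lying in $\operatorname{Im}(\partial)$, and the remaining conditions of Definition~\ref{def: equivalent braided zestings} then assert precisely that $(\omega',t')$ changes by $\operatorname{Im}(\partial)$. Hence $pr_1^{-1}(0)\cong Z^3_{\text{ab}}(\widehat A,\mathbb{C}^*)/\operatorname{Im}(\partial)=H^3_{\text{ab}}(\widehat A,\mathbb{C}^*)$, and this identification is exactly the $H^3_{\text{ab}}$-orbit of $(\mathbf 1,1,1)$, giving exactness here.

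\emph{Exactness at $H^2_{\text{ab}}(\widehat A,A_0)$.} A class $\lambda$ lies in the image of $pr_1$ iff there exist $\omega,t$ making $(\lambda,\omega,t)$ a braided zesting datum. Unwinding the definition: condition \eqref{eq:BZ2}, after the standard rearrangement, says exactly that $(\omega,t)\in\widetilde C^3_{\text{ab}}(\widehat A,\mathbb{C}^*)$; the associativity constraint $d^3(\omega)=\lambda\wedge_c\lambda$ is the first component of the pair $PW^2(\lambda)=\bigl(\lambda\wedge_c\lambda,\ \langle\chi_1,\lambda(\chi_2,\chi_3)\rangle\bigr)$; and \eqref{eq:BZ3} rearranges to the equation identifying $\widetilde\partial(\omega,t)$ with the second component $\langle\chi_1,\lambda(\chi_2,\chi_3)\rangle$. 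Thus $(\lambda,\omega,t)$ is a braided zesting datum iff $(\omega,t)\in\widetilde C^3_{\text{ab}}(\widehat A,\mathbb{C}^*)$ with $\widetilde\partial(\omega,t)=PW^2(\lambda)$, i.e.\ iff $PW^2(\lambda)=0$ in $\widetilde H^4_{\text{ab}}(\widehat A,\mathbb{C}^*)$ (that $PW^2$ is a well-defined homomorphism into this group is \cite[Eq.~(8.44) and Section~8.6]{davydov2021braided} together with the paragraph preceding the statement). So the image of $pr_1$ is $\ker(PW^2)$, as required. Finally, since $pr_1$ surjects onto $\ker(PW^2)$ with $pr_1^{-1}(0)$ equal to the $H^3_{\text{ab}}$-orbit of the basepoint, $pr_1$ descends to a bijection of pointed sets $\mathbf{Sez}_{\text{br}}(A,q)/H^3_{\text{ab}}(\widehat A,\mathbb{C}^*)\cong\ker(PW^2)$, yielding the displayed short exact sequence.

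The formal backbone above is routine; the real work — and the place I expect the only genuine friction — is in matching cohomological conventions. Concretely: in the exactness at $\mathbf{Sez}_{\text{br}}$ one must check that the residual equivalence among $\lambda$-trivial data, and in particular the term $\psi\wedge_c\psi$ for a homomorphism $\psi$, is exactly $\operatorname{Im}(\partial)$; and in the exactness at $H^2_{\text{ab}}$ one must carry out the explicit rearrangements turning \eqref{eq:BZ2} into membership in $\widetilde C^3_{\text{ab}}$ and \eqref{eq:BZ3} into $\widetilde\partial(\omega,t)=PW^2(\lambda)$, reconciling the variable labelling of $\widetilde\partial$ from \cite{davydov2021braided} with \eqref{eq:BZ3}. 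Everything else follows formally from the free action and Remark~\ref{Remark braided and abelian cocyles}.
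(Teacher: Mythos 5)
Your overall route is the same one the paper takes (it states the proposition as an immediate consequence of the free $H^3_{\text{ab}}$-action and of the observation that $\lambda$ admits a braided zesting iff $\widetilde\partial(\omega,t)=PW^2(\lambda)$, offering no further proof), and your treatment of exactness at $H^2_{\text{ab}}(\widehat A,A_0)$ is acceptable at that level of detail. The genuine gap is exactly at the point you yourself flagged as "friction": the assertion that, for a homomorphism $\psi\in\operatorname{Hom}(\widehat A,A_0)$, the pair $(1,\psi\wedge_c\psi)$ lies in $\operatorname{Im}(\partial)$. The $t$-component of any $\partial(\kappa)$ is the antisymmetrization $\kappa(y,x)-\kappa(x,y)$, which vanishes on the diagonal, whereas $\psi\wedge_c\psi(\chi,\chi)=c(\psi(\chi),\psi(\chi))=q(\psi(\chi))$. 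Under $H^3_{\text{ab}}\cong\operatorname{Quad}$, the class of $(1,\psi\wedge_c\psi)$ is the quadratic form $\chi\mapsto q(\psi(\chi))$, which is nontrivial whenever $q\circ\psi\neq 1$. Concretely, in the super-Tannakian situation of Example~\ref{Ex: fermion zesting} ($A=A_0=\mathbb Z/2$, $q(f)=-1$), the homomorphism $\psi(\chi)=f$ gives the order-two class of $H^3_{\text{ab}}(\mathbb Z/2,\mathbb C^*)\cong\mathbb Z/4$, certainly not a coboundary. Hence your identification of $pr_1^{-1}(0)$ with $Z^3_{\text{ab}}(\widehat A,\mathbb C^*)/\operatorname{Im}(\partial)$ — i.e.\ exactness at $H^3_{\text{ab}}$ and at $\mathbf{Sez}_{\text{br}}$ — does not follow as written. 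In the Tannakian case $c$ is trivial and your argument goes through, but in the fermionic cases, which are the ones of principal interest in this paper, your key step is false.

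Note also that this cannot be dismissed as a convention-matching chore: taken literally, Definition~\ref{def: equivalent braided zestings} with $\kappa=1$ and the above $\psi$ declares $(\mathbf 1,1,1)$ and $(\mathbf 1,1,\psi\wedge_c\psi)$ equivalent, even though by \eqref{eq:formula zested modular data} these two data produce different zested modular data (the twist on the odd components changes sign). So the freeness of the $H^3_{\text{ab}}$-action — which the paper likewise asserts without proof and which is the whole content of the first two exactness statements — is precisely where real work is required: one must show that a witnessing pair $(\psi,\kappa)$ between $\lambda$-trivial data cannot move the abelian cohomology class, e.g.\ by tracking the evaluation factor $\langle\,\cdot\,,\psi(\cdot)\rangle$ that accompanies $\psi\wedge_c\psi$ in the underlying categorical change of zesting datum, or by otherwise constraining the admissible $\psi$ in the equivalence relation. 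Your proof replaces this necessary step with the false statement that $\psi\wedge_c\psi$ is a coboundary, so the argument for injectivity and for exactness at $\mathbf{Sez}_{\text{br}}$ has a genuine hole; everything else is the same formal backbone as the paper's intended argument.
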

\qed

\begin{example}\label{Ex: fermion zesting}
Consider $A = A_0 = \mathbb{Z}/2 = \langle f \rangle$ and $q(f) = -1$, which implies that $(A,q)$ is the pre-metric group associated to the symmetric category of super vector spaces. We have that $H^3_{\text{ab}}(\mathbb{Z}/2, \mathbb{C}^*) \cong \mathbb{Z}/4$, with a generator given by the abelian 3-cocycle $\tau(f, f, f) = -1$ and the 2-cochain $c(f, f) = i$. Additionally, $H^2_{\text{ab}}(\mathbb{Z}/2, \mathbb{Z}/2) = H^2(\mathbb{Z}/2, \mathbb{Z}/2) = \mathbb{Z}/2$.

A braided zesting for the non-trivial 2-cocycle $\lambda(f, f) = f$, has braided zesting datum $\omega(f, f, f) = i$, and $t(f, f) = e^{\frac{\pi i}{4}}$. For this example, we employ the canonical identification between $\widehat{\mathbb{Z}/2}$ and $\mathbb{Z}/2$.
\end{example}

\subsection{Sequential Obstructions for Braided Zestings}\label{subsecton: sequencial obstructions}

Although Proposition \ref{prop: characterization of image} offers a  characterization of the image of the map $\mathbf{Sez}_{\text{br}}(A,q) \stackrel{pr_1}{\to} H^2_{\text{ab}}(\widehat{A}, \mathbb{C}^*)$, it remains important to develop an algorithmic approach for determining the triviality of $PW^2(\lambda)$ in computational settings. The first obstruction, naturally, is that $\lambda \cup_c \lambda = 0$. The second obstruction is expressed using a straightforward identity for the 3-cochain, ensuring $d^3(\omega)=\lambda \wedge_c \lambda$.

\begin{proposition}\label{prop:Obstruction1}
Let $(A, q)$ be a pre-metric group and $(\lambda, \omega)$ an associative zesting datum. The cochain $\omega$ admits a $t \in C^2(\widehat{A}, \mathbb{C}^*)$ satisfying equation \eqref{eq:BZ2} if and only if
\begin{equation}\label{eq:AlternatingProduct}
\prod_{\sigma \in \mathbb{S}_3} \omega\left(\chi_{\sigma(1)}, \chi_{\sigma(2)}, \chi_{\sigma(3)}\right)^{\operatorname{sgn}(\sigma)} = 1,
\end{equation}
holds for all $\chi_1, \chi_2, \chi_3 \in \widehat{A}$. Here, $\operatorname{sgn}(\sigma)$ denotes the sign of the permutation $\sigma\in \mathbb{S}_3$.
\end{proposition}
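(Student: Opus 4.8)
The plan is to treat equation \eqref{eq:BZ2} as a system of equations for the unknown $t \in C^2(\widehat{A}, \mathbb{C}^*)$ and to recognize its right-hand side as the image of $t$ under a natural linear (multiplicative) operator, so that solvability becomes a statement about that operator's image. Concretely, define for $t \in C^2(\widehat{A}, \mathbb{C}^*)$ the ``quasi-coboundary'' operator
\[
(Dt)(\chi_1,\chi_2,\chi_3) = \frac{t(\chi_2,\chi_3)\, t(\chi_1,\chi_2\chi_3)}{t(\chi_1\chi_2,\chi_3)\, t(\chi_1,\chi_2)},
\]
which is just $d^2 t$ up to rearrangement, and rewrite \eqref{eq:BZ2} as asking for $t$ with
\[
\frac{\omega(\chi_1,\chi_2,\chi_3)\,\omega(\chi_2,\chi_3,\chi_1)}{\omega(\chi_2,\chi_1,\chi_3)} \;=\; \frac{t(\chi_1,\chi_2)\,t(\chi_1,\chi_3)}{t(\chi_1,\chi_2\chi_3)}.
\]
First I would observe that the right-hand side, as a function of $\omega$, is invariant under replacing $\omega$ by $\omega \cdot d^2 t'$ for any $t'$ — both sides of \eqref{eq:BZ2} transform compatibly — so the obstruction depends only on the class of $\omega$ modulo $B^3$, which is consistent with the fact that a genuine $3$-cocycle $\omega$ contributes to $H^3_{\mathrm{ab}}$.

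The main computational step is to identify the ``curl'' of the left-hand side map. Let $L(\omega)(\chi_1,\chi_2,\chi_3) := \omega(\chi_1,\chi_2,\chi_3)\,\omega(\chi_2,\chi_3,\chi_1)\,\omega(\chi_2,\chi_1,\chi_3)^{-1}$ be the left side of \eqref{eq:BZ2}, and let $R(t)(\chi_1,\chi_2,\chi_3) := t(\chi_1,\chi_2)\,t(\chi_1,\chi_3)\,t(\chi_1,\chi_2\chi_3)^{-1}$ be the right side. The key point is that $R(t)$, viewed as a $3$-cochain, always satisfies the alternating-sum identity \eqref{eq:AlternatingProduct}: when one forms $\prod_{\sigma \in \mathbb{S}_3} R(t)(\chi_{\sigma(1)},\chi_{\sigma(2)},\chi_{\sigma(3)})^{\operatorname{sgn}(\sigma)}$, every factor $t(\chi_a,\chi_b)$ appears with total exponent zero because, for each unordered pair, the three even permutations and three odd permutations contribute it an equal number of times with opposite signs; the terms $t(\chi_a,\chi_b\chi_c)^{-1}$ similarly cancel. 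Hence \eqref{eq:AlternatingProduct} is a \emph{necessary} condition: if \eqref{eq:BZ2} has a solution $t$, then $L(\omega)$ satisfies \eqref{eq:AlternatingProduct}, and since $d^3\omega = \lambda\wedge_c\lambda$ forces no extra constraint here (the alternating product of $d^3\omega$ over $\mathbb{S}_3$ must also be checked to vanish, which it does by a similar symmetry count), the condition \eqref{eq:AlternatingProduct} on $\omega$ itself is well-posed.

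For the converse — the harder direction — I would show that \eqref{eq:AlternatingProduct} is exactly the cocycle-type condition characterizing the image of $R$. The strategy is to fix $\chi_1$ and regard $\chi_2 \mapsto t(\chi_1,\chi_2)$ as a function on $\widehat{A}$; then $R(t)(\chi_1,-,-)$ is (up to sign conventions) the coboundary $d^1$ of that function in the second and third variables. So for each fixed $\chi_1$, solvability of $R(t)(\chi_1,\chi_2,\chi_3) = L(\omega)(\chi_1,\chi_2,\chi_3)$ in the variable $t(\chi_1,-)$ amounts to $L(\omega)(\chi_1,-,-)$ being a $2$-coboundary on $\widehat{A}$, i.e.\ a symmetric $2$-cocycle that splits; one then checks the $2$-cocycle condition in $\chi_2,\chi_3$ holds because $d^3\omega = \lambda\wedge_c\lambda$ and $\lambda$ is symmetric (this uses \eqref{eq:BZ1} together with the hexagon-type rearrangements), and that $H^2(\widehat{A},\mathbb{C}^*) = 0$ kills the class, giving a solution $t(\chi_1,-)$ for each $\chi_1$ separately. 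The remaining work is to show these per-$\chi_1$ solutions can be chosen \emph{coherently} so the resulting $t$ is a single well-defined $2$-cochain: here is where \eqref{eq:AlternatingProduct} enters essentially, pinning down the ambiguity (a character of $\widehat{A}$ for each $\chi_1$, i.e.\ an element of $A$) so that the compatibility across different $\chi_1$ can be met. I expect the coherent-choice step — translating \eqref{eq:AlternatingProduct} into precisely the statement that the obstruction to gluing the partial solutions vanishes — to be the main obstacle, and I would handle it by writing the discrepancy between two candidate solutions as an explicit $1$-cochain-valued expression in $\omega$ and verifying it is a coboundary iff \eqref{eq:AlternatingProduct} holds.
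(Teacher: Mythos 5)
Your necessity argument is correct: the right-hand side of \eqref{eq:BZ2}, viewed as a $3$-cochain in $(\chi_1,\chi_2,\chi_3)$, always has trivial alternating product over $\mathbb{S}_3$, so solvability of \eqref{eq:BZ2} forces \eqref{eq:AlternatingProduct}. The sufficiency direction, however, contains a genuine error. You invoke $H^2(\widehat{A},\mathbb{C}^*)=0$; this is false for non-cyclic $\widehat{A}$ (for instance $H^2(\mathbb{Z}/2\times\mathbb{Z}/2,\mathbb{C}^*)\cong\mathbb{Z}/2$, and the Klein four group is precisely the case this paper needs for $SO(2n)_k$ with $n$ even). Moreover, if that vanishing were true, then (granting the cocycle property you assert) equation \eqref{eq:BZ2} would be solvable for \emph{every} associative zesting datum, so \eqref{eq:AlternatingProduct} could not be an ``if and only if'' condition: your sufficiency argument proves too much and is inconsistent with your own necessity argument.

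The second problem is that you locate the role of \eqref{eq:AlternatingProduct} in a ``coherent gluing over $\chi_1$'' step that does not exist: for fixed $\chi_1$, equation \eqref{eq:BZ2} involves only the single function $t(\chi_1,-)$, so solutions chosen independently for each $\chi_1$ assemble into a $2$-cochain $t$ with no compatibility to check. The actual mechanism, which is the paper's proof, is that \eqref{eq:AlternatingProduct} is exactly what guarantees the per-$\chi_1$ solvability: by \cite[Lemma 4.12]{zesting} (using $d^3(\omega)=\lambda\wedge_c\lambda$), the left-hand side $O_1(\chi_1\mid\chi_2,\chi_3)$ of \eqref{eq:BZ2} is a $2$-cocycle in $(\chi_2,\chi_3)$; the right-hand side ranges exactly over the coboundaries $d^1\bigl(t(\chi_1,-)\bigr)$; and since $\mathbb{C}^*$ is divisible, a $2$-cocycle on the finite abelian group $\widehat{A}$ is a coboundary if and only if it is symmetric. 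Symmetry of $O_1(\chi_1\mid -,-)$ for all $\chi_1$ is literally condition \eqref{eq:AlternatingProduct}, because the ratio $O_1(\chi_1\mid\chi_2,\chi_3)/O_1(\chi_1\mid\chi_3,\chi_2)$ equals the alternating product in \eqref{eq:AlternatingProduct}. (A minor further point: the proposition does not assume $\lambda$ symmetric, so your appeal to the symmetry of $\lambda$ when checking the cocycle condition is not available; only the symmetry of the bicharacter $c$ and the relation $d^3(\omega)=\lambda\wedge_c\lambda$ are at hand.)
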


\begin{proof}
We begin by noting that the existence of a 2-cochain $t \in C^2(\widehat{A}, \mathbb{C}^*)$ satisfying equation \eqref{eq:BZ2} is characterized by \cite[Lemma 4.12]{zesting}. Specifically, such a $t$ exists if and only if, for each $\chi_1 \in \widehat{A}$, the 2-cocycle $O_1(\lambda)(\chi_1| -, -) \in Z^2(\widehat{A}, \mathbb{C}^*)$ defined by
\[
O_1(\lambda)(\chi_1 | \chi_2, \chi_3) := \frac{\omega(\chi_1, \chi_2, \chi_3) \omega(\chi_2, \chi_3, \chi_1)}{\omega(\chi_2, \chi_1, \chi_3)}
\]
has a trivial cohomology class. Since $\mathbb{C}^*$ is a divisible group, a 2-cocycle over the abelian group $\widehat{A}$ has trivial cohomology if and only if it is symmetric.

We now observe that
\[
\frac{O_1(\lambda)(\chi_1| \chi_2, \chi_3)}{O_1(\lambda)(\chi_1| \chi_3, \chi_2)}=\prod_{\sigma \in \mathbb{S}_3} \omega\left(\chi_{\sigma(1)}, \chi_{\sigma(2)}, \chi_{\sigma(3)}\right)^{\operatorname{sgn}(\sigma)} ,
\]
which implies that $O_1(\lambda)(\chi_1 | \cdot, \cdot)$ is symmetric for all $\chi_1$ if and only if condition \eqref{eq:AlternatingProduct} is satisfied.
\end{proof}

\begin{remark}\label{rmk:Psi}
For an arbitrary pair of groups $A$ and $B$, the map $\Psi: C^3(A,B) \to C^3(A,B)$, given by 
\[
\Psi(\omega )(x_1,x_2,x_3)=\sum_{\sigma \in \mathbb{S}_3} \operatorname{sgn}(\sigma)\omega\left(x_{\sigma(1)}, x_{\sigma(2)}, x_{\sigma(3)}\right),
\]
is a group homomorphism. When restricted to $Z^3(A,B)$, it maps to $\Hom(\wedge^3 A, B)$, the group of alternating trilinear maps. Furthermore, $\Psi(\tau) = 0$ for any $\tau \in B^3(A,B)$, thereby inducing a group homomorphism from $H^3(A,B)$ to $\Hom(\wedge^3 A, B)$. 
\end{remark}

Finally, the following proposition provides the final step for verifying the triviality of $PW^2(\lambda)$.

\begin{proposition}\cite[Corollary 4.16]{zesting}\label{Prop: obstruccion O2}
Let $(\lambda, \omega, l)$ be such that $(\lambda, \omega)$ is an associative zesting datum and $l \in C^2(\widehat{A}, \mathbb{C}^*)$ satisfies equation \eqref{eq:BZ2}.

Define the map $O_2(- | \chi_2, \chi_3) \in C^1(\widehat{A}, \mathbb{C}^*)$ as
\[
O_2 (\chi_1 | \chi_2, \chi_3) = \frac{\omega(\chi_1, \chi_2, \chi_3) \omega(\chi_3, \chi_1, \chi_2)}{\omega(\chi_1, \chi_3, \chi_2)} \frac{l(\chi_1, \chi_3) l(\chi_2, \chi_3)}{l(\chi_1 \chi_2, \chi_3)},
\]
where $\chi_2, \chi_3 \in \widehat{A}$.

This map is a linear character that defines a 2-cocycle $O_2(\omega,l) \in H^2(\widehat{A}, A)$. Furthermore, an element $t \in C^2(\widehat{A}, \mathbb{C}^*)$ exists that satisfies equations \eqref{eq:BZ2} and \eqref{eq:BZ3}, implying that $PW^2(\lambda) = 0$, if and only if
\[
[O_2(\omega,l)] = [\lambda] \in H^2(\widehat{A}, A).
\]
\end{proposition}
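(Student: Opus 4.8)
The statement to prove is Proposition~\ref{Prop: obstruccion O2}, which is cited from \cite[Corollary 4.16]{zesting}. Since it is a cited result, my job is really to reconstruct its proof from the surrounding definitions: the defining equations \eqref{eq:BZ2} and \eqref{eq:BZ3} of a braided zesting datum, together with the characterization of $PW^2(\lambda) = 0$ as the condition that $\lambda$ admits a braided zesting.

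The plan is to argue that, given the auxiliary $l$ satisfying \eqref{eq:BZ2}, any candidate replacement $t = l\cdot\mu$ for $\mu \in C^2(\widehat A,\mathbb C^*)$ still satisfies \eqref{eq:BZ2} precisely when $\mu$ is a $2$-cocycle on $\widehat A$ (this is immediate since the left side of \eqref{eq:BZ2} only depends on $\omega$, and the right side is the coboundary-type expression $d^2$-like functional in $t$; replacing $l$ by $l\mu$ multiplies the right side by the analogous expression in $\mu$, which must therefore be $1$, i.e. $\mu \in Z^2(\widehat A,\mathbb C^*)$). Then I would substitute $t = l\mu$ into \eqref{eq:BZ3}: the left side $\chi_3(\lambda(\chi_1,\chi_2))$ is fixed, the $\omega$-part is fixed, the $l$-part produces exactly $O_2(\chi_1\mid\chi_2,\chi_3)$, and the $\mu$-part produces $\frac{\mu(\chi_1,\chi_3)\mu(\chi_2,\chi_3)}{\mu(\chi_1\chi_2,\chi_3)}$. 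So \eqref{eq:BZ3} becomes: we need $\mu \in Z^2(\widehat A,\mathbb C^*)$ such that for all $\chi_1,\chi_2,\chi_3$,
\[
\frac{\mu(\chi_1,\chi_3)\mu(\chi_2,\chi_3)}{\mu(\chi_1\chi_2,\chi_3)} \;=\; \frac{\chi_3(\lambda(\chi_1,\chi_2))}{O_2(\chi_1\mid\chi_2,\chi_3)}.
\]
I would next verify that the right-hand side, viewed as a function of $(\chi_1,\chi_2)$ with $\chi_3$ fixed, is a $2$-coboundary data of the expected shape — more precisely, I would show that $O_2(\,\cdot\mid\chi_2,\chi_3)$ is a linear character of $\widehat A$ in its first slot (this is asserted in the statement and follows from the cocycle identity for $\omega$ and the $2$-cocycle property of $l$ combined with \eqref{eq:BZ2}), so that $[O_2]$ defines a class in $H^2(\widehat A, A)$ via the standard identification of $\Hom(\widehat A,\widehat{\widehat A}) = \Hom(\widehat A,A)$ with a piece of $H^2$. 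The point is that the equation above is solvable for $\mu$ exactly when the "difference" class $[\lambda] - [O_2(\omega,l)]$ in $H^2(\widehat A,A)$ vanishes: given a $2$-cochain $\psi$ on $\widehat A$ with values in $A$ trivializing that difference, one reads off $\mu$ from $\psi$ by the evaluation pairing, and conversely.

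The key steps, in order: (1) reduce \eqref{eq:BZ2}-preservation under $t \mapsto l\mu$ to $\mu \in Z^2(\widehat A,\mathbb C^*)$; (2) rewrite \eqref{eq:BZ3} for $t = l\mu$ as the displayed equation relating $\mu$ to $\lambda$ and $O_2$; (3) check that $O_2(\,\cdot\mid\chi_2,\chi_3)$ is a character in the first variable and that the assignment $(\chi_2,\chi_3)\mapsto O_2(\chi_1\mid\chi_2,\chi_3)$ assembles into a well-defined $2$-cocycle $O_2(\omega,l) \in Z^2(\widehat A, A)$ whose class is independent of the choice of $l$ within its constraints; (4) translate "the displayed equation has a solution $\mu$" into the cohomological statement $[O_2(\omega,l)] = [\lambda]$ in $H^2(\widehat A,A)$; (5) invoke the earlier characterization that such a $t$ exists iff $PW^2(\lambda) = 0$, closing the equivalence. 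For step (4) I would use that, for an abelian group $\widehat A$ and divisible coefficients $\mathbb C^*$, the relevant obstruction lives in $H^2(\widehat A, A)$ and is computed by pairing: the symmetric-vs-antisymmetric decomposition of $2$-cocycles on $\widehat A$ valued in $\mathbb C^*$ shows that a $\mathbb C^*$-valued $2$-cochain $\mu$ solving the displayed equation exists iff the class of the $A$-valued cocycle obtained by "integrating out" the third slot agrees with $[\lambda]$.

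The main obstacle is step (3)–(4): making precise the identification of the collection of characters $\chi_1 \mapsto O_2(\chi_1\mid\chi_2,\chi_3)$ with an honest $2$-cocycle in $Z^2(\widehat A, A)$, and then showing that solvability of the displayed equation for $\mu$ is \emph{equivalent} (not merely implied by) the equality of classes $[O_2(\omega,l)] = [\lambda]$. The forward direction (solvability $\Rightarrow$ equality of classes) is a direct manipulation, but the converse requires constructing $\mu$ from a trivializing cochain and checking it genuinely lands in $Z^2(\widehat A,\mathbb C^*)$ and satisfies \eqref{eq:BZ3} on the nose; this uses the abelian-group-cohomology bookkeeping of \cite[Lemma 4.12]{zesting} in an essential way, together with divisibility of $\mathbb C^*$ to split off symmetric parts. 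Everything else is formal substitution into \eqref{eq:BZ2} and \eqref{eq:BZ3}.
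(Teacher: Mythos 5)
The paper itself does not prove this proposition (it is quoted from \cite[Corollary 4.16]{zesting} and left without proof), so your sketch has to be judged on its own; and as written it has a genuine gap at its very first step, which the rest of the plan depends on. You claim that $t=l\mu$ satisfies \eqref{eq:BZ2} precisely when $\mu\in Z^2(\widehat A,\mathbb{C}^*)$. That is false: since the left side of \eqref{eq:BZ2} depends only on $\omega$, the condition on $\mu=t/l$ is $\mu(\chi_1,\chi_2)\mu(\chi_1,\chi_3)=\mu(\chi_1,\chi_2\chi_3)$, i.e.\ $\mu(\chi_1,-)$ is a character of $\widehat A$ for each fixed $\chi_1$, so $\mu\in C^1(\widehat A,\widehat{\widehat A})\cong C^1(\widehat A,A)$. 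This is neither implied by nor implies the cocycle condition $d^2\mu=1$ (the two coincide only when $\mu$ is a bicharacter); the paper records the correct fact in the proof of the lemma on $s:H^3_{\text{sym}}(B,\mathbb{C}^\times)\to H^2(B,\widehat B)$, where it observes that two solutions of \eqref{eq:BZ2} differ by an element of $C^1(B,\widehat B)$. This identification is not a technicality: it is the engine of the whole argument. Writing $\mu(\chi_1,\chi_2)=\chi_2(\psi(\chi_1))$ with $\psi\in C^1(\widehat A,A)$, your displayed equation has left side $\chi_3\bigl(d^1\psi(\chi_1,\chi_2)\bigr)$, so \eqref{eq:BZ3} for $t=l\mu$ says exactly that the $A$-valued $2$-cocycles $O_2(\omega,l)$ and $\lambda$ differ by $d^1\psi$, i.e.\ $[O_2(\omega,l)]=[\lambda]$; conversely a trivializing $\psi$ (a $1$-cochain, not a $2$-cochain as in your step (4)) produces $\mu$ by evaluation and hence $t$. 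With your constraint $\mu\in Z^2(\widehat A,\mathbb{C}^*)$ this translation collapses: in the converse direction the $\mu$ built from $\psi$ satisfies $d^2\mu(\chi_1,\chi_2,\chi_3)=\chi_3\bigl(d^1\psi(\chi_1,\chi_2)\bigr)$, which is generically nontrivial, so your own check that $\mu$ ``genuinely lands in $Z^2(\widehat A,\mathbb{C}^*)$'' would fail; and in the forward direction the cocycle property gives you nothing that converts into an equality of classes in $H^2(\widehat A,A)$.

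A second, related gap: in step (3) you invoke ``the $2$-cocycle property of $l$,'' but $l$ is only assumed to be a $2$-cochain satisfying \eqref{eq:BZ2}; and the assertion that $O_2(-\mid\chi_2,\chi_3)$ is a linear character of $\widehat A$, so that it assembles (via $\widehat{\widehat A}\cong A$) into a well-defined $A$-valued $2$-cocycle whose class is independent of the admissible choice of $l$, is precisely the nontrivial content of the cited result — it has to be derived from $d^3\omega=\lambda\wedge_c\lambda$ and \eqref{eq:BZ2}, not taken from the statement being proved. Once you correct step (1) to $\mu\in C^1(\widehat A,A)$ and actually establish the character/cocycle claims for $O_2$, the remaining substitutions in your steps (2) and (5) do go through and reproduce the intended argument of \cite[Corollary 4.16]{zesting}; as it stands, however, the proof does not close.
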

\qed

\begin{remark}\label{rmk: procedimiento para calcular braided zestings}
We can outline an algorithmic procedure for calculating the braided zesting data of a pre-metric group $(A,q)$  where $q(x) = -1$ for some $x\in A_0$. The steps are as follows:

\begin{enumerate}
    \item   Compute symmetric 2-cocycles that represent the classes in $\lambda \in H^2_{\text{ab}}(\widehat{A}, A_0)$ such that $\lambda \cup_c \lambda = 0$.
    \item Given an associative zesting datum $(\lambda, \omega)$, check if $\Psi(\omega) = \Psi(\omega')$ for some $\omega' \in Z^3(\widehat{A}, \mathbb{C}^*)$ (see Remark \ref{rmk:Psi}). In this case, $(\lambda, \omega/\omega')$ is an associative zesting datum satisfying equation \eqref{eq:AlternatingProduct}. If not, this $\lambda$ does not admit a braided zesting.
    
    \item If equation \eqref{eq:AlternatingProduct} holds, compute $l \in C^2(\widehat{A}, \mathbb{C}^*)$ that satisfies equation \eqref{eq:BZ2}.
    \item Following Proposition \eqref{Prop: obstruccion O2}, verify that $[O_2(\omega,l)] = [\lambda]$ in $H^2_{\text{ab}}(\widehat{A}, A_0)$. If $[O_2(\omega)] \neq [\lambda]$, then $\lambda$ does not admit a braided zesting.
\end{enumerate}

By following these steps, we can calculate representatives of the $H^3_{\text{ab}}(\widehat{A}, \mathbb{C}^*)$-orbits of the braided zesting data.

\end{remark}
 
\subsection{Ribbon zesting data}

Considering the results of \cite[Corollary 5.3]{zesting} on the definition of ribbon zesting data for universally graded braided fusion categories, we introduce the corresponding notion for pre-metric groups.

\begin{definition}
Let $(A, q)$ be a pre-metric group. A \emph{ribbon zesting datum} of $(A, q)$ is a quadruple $(\lambda, \omega, t, f)$ where $(\lambda, \omega, t)$ is a braided zesting datum and $f: \widehat{A} \to \mathbb{C}^*$ is a 1-cochain satisfying the following conditions:
\begin{align}
    f(\chi_1 \chi_2) \langle \chi_1 \chi_2, \lambda(\chi_1, \chi_2) \rangle q(\lambda(\chi_1, \chi_2)) &= f(\chi_1) f(\chi_2) \frac{t(\chi_1, \chi_2)}{t(\chi_2, \chi_1)}, \label{ribbon eq 1} \\
    \frac{f(\chi_1)}{f(\chi_1^{-1})} &= q(\lambda(\chi_1, \chi_1^{-1})) \langle \chi_1, \lambda(\chi_1, \chi_1^{-1}) \rangle \label{ribbon eq 2}
\end{align}
for all $\chi_1, \chi_2 \in \widehat{A}$.
\end{definition}

Note that if $(\lambda, \omega, t, f)$ and $(\lambda, \omega, t, f')$ are ribbon zesting data, then $\frac{f}{f'}: \widehat{A} \to \{\pm 1\}$ is a group homomorphism. Hence, the set of all possible ribbon zesting data structures associated with a fixed braided zesting datum forms a torsor over the abelian group $\operatorname{Hom}(\widehat{A}, \{\pm 1\}) \cong A_2$, where $A_2$ is the 2-torsion subgroup of $A$.

\subsection{Zested  modular data}

A fusion category is called \emph{pseudo-unitary} if its Frobenius-Perron dimension coincides with the global dimension, as discussed in \cite{ENO}. It was established in \cite[Proposition 8.23]{ENO} that a \emph{pseudo-unitary} fusion category has a unique spherical structure, such that the quantum dimension of each object is positive (and thus coincide with the Frobenius-Perron dimension). 

We will extend this terminology to modular categories, referring to them as \emph{pseudo-unitary} if the quantum dimensions of their objects are positive. Accordingly, we define a modular datum $M$ to be \emph{pseudo-unitary} if $d_i \in \mathbb{R}^{>0}$ for every $i \in I$.

In this section, we introduce a formula for the new pseudo-unitary modular data derived from universally graded ribbon zestings of a pseudo-unitary modular category. Although these formulas may be less general than those presented in \cite{zesting}, they offer the advantages of simplicity and symmetry. Moreover, modular data corresponding to Verlinde categories are pseudo-unitary, and unitary modular categories hold particular importance in various applications, such as topological quantum computation and the study of topological phases of matter \cite{RW}.

The advantage of considering a pseudo-unitary modular category $\mathcal{B}$ is that for a braided zesting datum $(\lambda, \omega, t)$, there is a unique choice for the ribbon zesting function that produces a positive quantum dimension on $\mathcal{B}^\lambda$, given by $f(\chi) = t(\chi, \chi)$. This is established in the following theorem.

\begin{theorem} \label{th:new_modular_data}
Let $\mathcal{B}$ be a pseudo-unitary modular category. A ribbon zesting datum $(\lambda, \omega, t, f)$ of the pre-metric group $(G(\mathcal{B}),\theta)$ associated with $\mathcal{B}_{\text{pt}}$ produces a pseudo-unitary zesting $\mathcal{B}^\lambda$ if and only if 
\[
f(\chi) = t(\chi, \chi), \quad \forall \chi \in \widehat{G(\mathcal{B})}.
\]
The corresponding modular data is given by
\begin{equation}\label{eq:formula zested modular data}
s^{\lambda}_{i,j} = t(|i|, |j|) t(|j|, |i|) s_{i,j}, \quad T^{\lambda}_{i,i} = \frac{T_{i,i}}{t(|i|, |i|)},
\end{equation}
where $i, j \in \text{Irr}(\mathcal{B})$ and $|i|, |j| \in \widehat{G(\mathcal{B})}$ correspond to the grading elements.
\end{theorem}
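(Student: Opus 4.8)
The plan is to derive the formula directly from the general zesting formulas of \cite{zesting} and then pin down the ribbon function $f$ by the pseudo-unitarity requirement. First I would recall from \cite{zesting} the general expressions for the $S$- and $T$-matrices of a ribbon zesting $\mathcal{B}^\lambda$ of a premodular category: the twist of a simple object $X_i$ in the graded component $|i|\in\widehat{G(\mathcal{B})}$ gets multiplied by $f(|i|)^{-1}$ (up to the conventions relating $f$ and $t$), and the double braiding picks up a factor coming from $t(|i|,|j|)t(|j|,|i|)$ together with the grading pairing. Substituting the hypothesized $f(\chi)=t(\chi,\chi)$ into the ribbon equations \eqref{ribbon eq 1}–\eqref{ribbon eq 2} and using \eqref{eq:BZ2}–\eqref{eq:BZ3} should show these are consistent, so that $(\lambda,\omega,t,f)$ with this choice of $f$ is indeed a ribbon zesting datum.

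Next I would address the ``only if'' and the uniqueness: the set of ribbon zesting data extending a fixed braided datum $(\lambda,\omega,t)$ is a torsor over $\Hom(\widehat{G(\mathcal{B})},\{\pm1\})\cong G(\mathcal{B})_2$, as noted right after the definition of ribbon zesting datum. So any two candidate functions differ by a sign character $\epsilon$, and the resulting twist $T^\lambda_{i,i}$ is multiplied by $\epsilon(|i|)$. Since the original $\mathcal{B}$ is pseudo-unitary and the zesting formula for quantum dimensions involves $d^\lambda_i = t(|i|,|i|)\,d_i$ (or its inverse, depending on conventions) — which is where the choice $f(\chi)=t(\chi,\chi)$ enters — I would show that positivity of all $d^\lambda_i$ forces $\epsilon$ to be trivial, giving both the ``only if'' direction and uniqueness. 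Concretely, the quantum dimension in a ribbon category is $d_i = \theta_i^{-1} \cdot (\text{something involving the pivotal structure})$, and the ribbon zesting changes the pivotal/spherical structure through $f$; requiring the new dimensions to equal the Frobenius–Perron dimensions (which are insensitive to zesting of the associativity and braiding data, as the fusion rules are only permuted) singles out $f(\chi)=t(\chi,\chi)$.

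Finally I would assemble \eqref{eq:formula zested modular data}: the $S$-matrix formula $s^\lambda_{i,j}=t(|i|,|j|)t(|j|,|i|)s_{i,j}$ follows because the quantum trace of the double braiding $c_{X_j,X_i}c_{X_i,X_j}$ in $\mathcal{B}^\lambda$ differs from that in $\mathcal{B}$ exactly by the scalar by which the braiding was zested, namely $t(|i|,|j|)t(|j|,|i|)$ — the grading-pairing contributions $\langle|i|,\lambda(\cdots)\rangle$ cancel against the modified pivotal structure precisely when $f=t(\chi,\chi)$ (this is the point of choosing $f$ this way, and it is what makes the formula clean and symmetric). The $T$-matrix formula $T^\lambda_{i,i}=T_{i,i}/t(|i|,|i|)$ is then just the statement that $\theta^\lambda_i = \theta_i \cdot t(|i|,|i|)$ combined with the convention $T_{i,i}=\theta_i^{-1}$.

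The main obstacle I anticipate is the bookkeeping in the second paragraph: carefully tracking how the ribbon zesting function $f$ interacts with the spherical/pivotal structure so as to show that $f(\chi)=t(\chi,\chi)$ is forced by positivity, rather than merely sufficient. This requires being precise about the normalization conventions in \cite{zesting} relating $f$, $t$, the twist, and the quantum dimension, and verifying that the twist-shift $\theta^\lambda_i/\theta_i$ and the dimension-shift $d^\lambda_i/d_i$ are governed by $f(|i|)$ and $t(|i|,|i|)$ respectively in a way that makes the constraint $d^\lambda_i>0$ equivalent to $f(\chi)=t(\chi,\chi)$. Everything else — checking the ribbon equations and computing the traces — should be a routine substitution once these conventions are fixed.
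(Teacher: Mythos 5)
Your proposal is correct and follows essentially the same route as the paper: the heart of the argument in both is that Frobenius--Perron dimensions are unchanged by zesting, so pseudo-unitarity together with the dimension-change formula of \cite[Proposition 5.5]{zesting} forces $f(\chi)=t(\chi,\chi)$, and the zested $S$- and $T$-formulas then come from \cite[Lemma 5.6]{zesting} and the convention $T_{i,i}=\theta_i^{-1}$. Your torsor-over-sign-characters detour and the verification of the ribbon equations are unnecessary for the statement as given, and the quoted dimension shift should be $\dim_{\mathcal{B}^\lambda}(X_\chi)=\frac{t(\chi,\chi)}{f(\chi)\dim_{\mathcal{B}}(\lambda(\chi,\chi^{-1}))}\dim_{\mathcal{B}}(X_\chi)$ rather than $t(|i|,|i|)\,d_i$, but this is exactly the bookkeeping you flagged and does not change the argument.
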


\begin{proof}
Recall that the fusion rules for the zested categories are given by $X_{\chi_1} \otimes^{\lambda} Y_{\chi_2} = \lambda(\chi_1, \chi_2) \otimes X_{\chi_1}\otimes Y_{\chi_2}$. In particular, the Frobenius-Perron dimension function $\text{FP-dim}: K_0(\mathcal{B}^\lambda) \to \mathbb{R}$ defines a ring homomorphism $\text{FP-dim}: K_0(\mathcal{B}^\lambda) \to \mathbb{R}$. Indeed, we have
\[
\text{FP-dim}(\lambda(\chi_1, \chi_2)\otimes X_{\chi_1}\otimes Y_{\chi_2}) = \text{FP-dim}(X_{\chi_1}) \text{FP-dim}(Y_{\chi_2}),
\]
which implies that the Frobenius-Perron dimension remains invariant under zesting.

Consequently, if both $\mathcal{B}$ and $\mathcal{B}^\lambda$ are pseudo-unitary, then $\dim_{\mathcal{B}}(X_\chi) = \text{FP-dim}(X_\chi) = \dim_{\mathcal{B}^\lambda}(X_\chi)$. However, by \cite[Proposition 5.5]{zesting}, we have
\[
\dim_{\mathcal{B}^\lambda}(X_\chi) = \frac{t(\chi, \chi)}{f(\chi) \dim_{\mathcal{B}}(\lambda(\chi, \chi^{-1}))} \dim_{\mathcal{B}}(X_\chi),
\]
and using the fact that $\dim_{\mathcal{B}^\lambda}(\lambda(\chi, \chi^{-1})) = 1$, it follows that $\dim_{\mathcal{B}^\lambda}(X_\chi) = \text{FP-dim}(X_\chi)$ and hence pseudo-unitary if and only if $f(\chi) = t(\chi, \chi)$.

Finally, it follows from \cite[Lemma 5.6]{zesting} and the fact that $T_{i,i} = \theta_i^{-1}$ that equation \ref{eq:formula zested modular data} holds.
\end{proof}





\section{Computing some families of braided zestings}\label{section:families}

We adopt the same notation as in Definition \ref{def: premetirc group A0 y c}. Specifically, given a pre-metric group $(A, q)$, we denote by $A_0$ the kernel of the associated bicharacter $b(x, y) := \frac{q(x) q(y)}{q(xy)}$ and by $c: A_0 \times A_0 \to \{\pm 1\}$ the symmetric bicharacter given by formula \eqref{eq: bicharacter symmetric}.
If $q(x) = 1$ for all $x \in A_0$, we refer to this as a \emph{Tannakian case}; otherwise, it is a \emph{super-Tannakian case}. The objective of this section is to elucidate how to compute braided zestings in both the Tannakian and super-Tannakian cases, with particular emphasis on cases that are especially relevant for Verlinde modular categories.

\subsection{Tannakian Zesting}
As we explain in Remark \ref{rmk:Psi}, for a finite abelian group $B$ the map
\begin{align*}
\psi_B(\omega)(l_1, l_2, l_3) = \prod_{\sigma \in \mathbb{S}_3}  \omega(l_{\sigma(1)}, l_{\sigma(2)}, l_{\sigma(3)})^{\operatorname{sgn}(\sigma)}, \quad \omega \in C^3(B,\mathbb C^*)
\end{align*}
induces a group homomorphism 
\[
\psi_B: H^3(B, \mathbb{C}^\times) \to \operatorname{Hom}(\wedge^3 B, \mathbb{C}^\times).
\]
We will denote the kernel of the map $\psi_B$ by $H_{\text{sym}}^3(B, \mathbb{C}^\times)$.

\begin{lemma}
We have a group homomorphism $s: H_{\text{sym}}^3(B, \mathbb{C}^\times) \to H^2(B, \widehat{B})$.
\end{lemma}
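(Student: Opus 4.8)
The statement asserts a group homomorphism $s\colon H^3_{\mathrm{sym}}(B,\mathbb C^\times)\to H^2(B,\widehat B)$. The natural source of such a map is the obstruction $O_2$ appearing in Proposition~\ref{Prop: obstruccion O2}, now in the Tannakian situation where $A_0$ plays no role (so $c$ is trivial, $\lambda$ is trivial, and an ``associative zesting datum'' is just a 3-cocycle $\omega\in Z^3(B,\mathbb C^\times)$). The plan is as follows. First I would observe that if $\omega\in Z^3(B,\mathbb C^\times)$ lies in $H^3_{\mathrm{sym}}(B,\mathbb C^\times)$, i.e.\ $\Psi(\omega)$ (in the notation of Remark~\ref{rmk:Psi}) is trivial, then by Proposition~\ref{prop:Obstruction1} the cochain $\omega$ admits a $t\in C^2(B,\mathbb C^\times)$ satisfying equation~\eqref{eq:BZ2} with $\lambda$ trivial. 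Fixing such a $t$, form the map
\[
O_2(\omega,t)(\chi_1\mid\chi_2,\chi_3)=\frac{\omega(\chi_1,\chi_2,\chi_3)\,\omega(\chi_3,\chi_1,\chi_2)}{\omega(\chi_1,\chi_3,\chi_2)}\cdot\frac{t(\chi_1,\chi_3)\,t(\chi_2,\chi_3)}{t(\chi_1\chi_2,\chi_3)},
\]
which by Proposition~\ref{Prop: obstruccion O2} is, for each fixed $(\chi_2,\chi_3)$, a linear character of $B$, hence defines an element $O_2(\omega,t)\in Z^2(B,\widehat B)$. I would then define $s(\omega):=[O_2(\omega,t)]\in H^2(B,\widehat B)$.

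Next I would check that $s$ is well defined, in two respects. (i) Independence of the auxiliary choice $t$: two solutions $t,t'$ of~\eqref{eq:BZ2} differ by $t'/t=$ a symmetric 2-cocycle with values in $\mathbb C^\times$; tracing how the factor $\tfrac{t(\chi_1,\chi_3)t(\chi_2,\chi_3)}{t(\chi_1\chi_2,\chi_3)}$ changes shows $O_2(\omega,t')/O_2(\omega,t)$ is a coboundary in $Z^2(B,\widehat B)$ (here one uses that $\mathbb C^\times$ is divisible, so a symmetric 2-cocycle is a coboundary, and that the discrepancy is visibly a $d^1$ of a 1-cochain valued in $\widehat B$). (ii) Independence of the representative $\omega$ within its cohomology class: replacing $\omega$ by $\omega\cdot d^2(\mu)$ for $\mu\in C^2(B,\mathbb C^\times)$ changes $O_2$ by an explicit coboundary, which can be absorbed by a corresponding change in $t$. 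Then I would verify multiplicativity: given $\omega,\omega'\in H^3_{\mathrm{sym}}$ with chosen solutions $t,t'$, the product $tt'$ solves~\eqref{eq:BZ2} for $\omega\omega'$, and inspection of the defining formula shows $O_2(\omega\omega',tt')=O_2(\omega,t)\cdot O_2(\omega',t')$ on the nose, so $s(\omega\omega')=s(\omega)+s(\omega')$.

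The main obstacle I anticipate is the well-definedness under the choice of $t$ (point (i) above): one must pin down precisely that the ambiguity in $t$ — a \emph{symmetric} $\mathbb C^\times$-valued 2-cocycle $\beta=t'/t$ — feeds into the formula for $O_2$ exactly as $d^1$ of the 1-cochain $\chi_3\mapsto[\,\chi_1\mapsto \beta(\chi_1,\chi_3)\beta(\chi_2,\chi_3)/\beta(\chi_1\chi_2,\chi_3)\,]$, interpreted in $\widehat B$, and hence is a $2$-coboundary in $Z^2(B,\widehat B)$. This is a finite but somewhat delicate cochain-level computation; it is essentially the abelian-group-cohomology bookkeeping underlying \cite[Lemma 4.12, Corollary 4.16]{zesting}, restricted to the case of trivial $\lambda$. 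Everything else — the linearity of $O_2(\omega,t)(\,\cdot\mid\chi_2,\chi_3)$, the 2-cocycle condition in the $(\chi_2,\chi_3)$ variables, and the multiplicativity — follows by direct substitution using the results already quoted in the excerpt.
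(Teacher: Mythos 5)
Your route is the same as the paper's: both define $s(\omega)$ as the class of the obstruction $O_2(\omega,t)$ for an auxiliary $t$ solving \eqref{eq:BZ2} (which exists by Proposition~\ref{prop:Obstruction1}), and then check that the class is independent of the choices; you additionally spell out independence of the representative $\omega$ and additivity of $s$, which the paper leaves implicit. The gap is in the step you yourself flag as the crux, namely independence of the choice of $t$. If $t$ and $t'$ both satisfy \eqref{eq:BZ2} for the same $\omega$, dividing the two equations gives $\beta(\chi_1,\chi_2)\,\beta(\chi_1,\chi_3)=\beta(\chi_1,\chi_2\chi_3)$ for $\beta=t'/t$, i.e.\ $\beta(\chi_1,-)\in\widehat B$ for every $\chi_1$, so $\beta$ is an element of $C^1(B,\widehat B)$. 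It is \emph{not} in general a symmetric $\mathbb{C}^\times$-valued $2$-cocycle: multiplicativity in one argument does not give the cocycle identity (that would force $\beta$ to be a bicharacter), let alone symmetry, so the divisibility of $\mathbb{C}^\times$ is not the relevant input at this point. Moreover, the object you propose as the trivializing $1$-cochain, $\chi_3\mapsto\bigl[\chi_1\mapsto \beta(\chi_1,\chi_3)\beta(\chi_2,\chi_3)/\beta(\chi_1\chi_2,\chi_3)\bigr]$, is not a $1$-cochain at all—it still depends on $\chi_2$ and is just the full discrepancy rewritten—so the argument as stated is circular.

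The correct mechanism, which is exactly the paper's one-line verification $O_2(\omega,l')=O_2(\omega,l)\,d^1(l'/l)$, is simpler than what you anticipate: with $\beta=t'/t\in C^1(B,\widehat B)$ as above, one computes $\frac{O_2(\omega,t')(\chi_1\mid\chi_2,\chi_3)}{O_2(\omega,t)(\chi_1\mid\chi_2,\chi_3)}=\frac{\beta(\chi_1,\chi_3)\,\beta(\chi_2,\chi_3)}{\beta(\chi_1\chi_2,\chi_3)}$, which for fixed $(\chi_1,\chi_2)$ is a character in $\chi_3$ (a quotient of values of the characters $\beta(\chi,-)$) and, viewed as a $\widehat B$-valued function of $(\chi_1,\chi_2)$ (the reading consistent with \eqref{eq:BZ3}, where $\chi_3$ is evaluated on $\lambda(\chi_1,\chi_2)$), is precisely $d^1(\beta)$ for the $1$-cochain $\chi\mapsto\beta(\chi,-)$. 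Hence $[O_2(\omega,t')]=[O_2(\omega,t)]$ in $H^2(B,\widehat B)$ with no divisibility needed. Once this identification is repaired, your points (ii) and (iii) are sound in outline—$tt'$ solves \eqref{eq:BZ2} for $\omega\omega'$ and $O_2$ is multiplicative in $(\omega,t)$, and a coboundary modification of $\omega$ can be absorbed into a modification of $t$—and they usefully make explicit checks that the paper's proof omits.
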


\begin{proof}
As shown in the proof of Proposition \ref{prop:Obstruction1}, a 3-cocycle $\omega$ belongs to $Z^3_{\text{sym}}(B, \mathbb{C}^\times)$ if and only if there exists a 2-cochain $l \in C^2(B, \mathbb{C}^*)$ such that equation \eqref{eq:BZ2} holds. Observe that if $l' \in C^2(B, \mathbb{C}^*)$ is another 2-cochain satisfying \eqref{eq:BZ2}, then $l/l' \in C^1(B, \widehat{B})$.

Now, by Proposition \ref{Prop: obstruccion O2}, we can define $s(\omega)$ as the cohomology class of
\[
O_2(\omega, l)(b_1 | b_2, b_3) = \frac{\omega(b_1, b_2, b_3) \omega(b_3, b_1, b_2)}{\omega(b_1, b_3, b_2)} \frac{l(b_1, b_3) l(b_2, b_3)}{l(b_1 b_2, b_3)},
\]
where $b_2, b_3 \in B$. This cohomology class is independent of the choice of $l$, as for another $l'$, we have $O_2(\omega, l') = O_2(\omega, l) d^1(l'/l)$.
\end{proof}

\begin{proposition}
Let $(A,q)$ be a pre-metric group. Then:
\begin{enumerate}
    \item The exact sequence \eqref{eq: sequence associ zesting} splits canonically, and $\mathbf{Sez}(A,q) = H^3(\widehat{A}, \mathbb{C}^*) \times H^2(\widehat{A}, A_0)$.
    \item $\ker(PW^2) = \operatorname{Im}(s) \cap H^2_{\text{sym}}(\widehat{A}, A_0)$.
\end{enumerate}
\end{proposition}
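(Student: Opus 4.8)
The plan is to handle the two statements in sequence, using the machinery built up in the preceding subsections.

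For part (1), the point is that we are in the Tannakian case, so $q$ restricted to $A_0$ is trivial, which means the bicharacter $c$ on $A_0$ defined by \eqref{eq: bicharacter symmetric} is trivial. Consequently the wedge product $\lambda \wedge_c \lambda$ is trivial for every $\lambda \in Z^2(\widehat{A}, A_0)$, so every associative zesting datum is of the form $(\lambda, \omega)$ with $\omega \in Z^3(\widehat{A}, \mathbb{C}^*)$, and the obstruction map $\cup_c$ in the exact sequence \eqref{eq: sequence associ zesting} is identically zero. I would then produce the splitting explicitly: the map $H^2(\widehat{A}, A_0) \to \mathbf{Sez}(A,q)$, $[\lambda] \mapsto [(\lambda, 1)]$, is well-defined (choosing a cocycle representative $\lambda$ and the trivial $\omega$; changing $\lambda$ by $d^1(\psi)$ changes the datum to $(\lambda d^1\psi, \lambda\wedge_c\psi \times \psi\wedge_c\lambda') = (\lambda d^1\psi, 1)$ since $c$ is trivial, which is equivalent to $(\lambda,1)$ via $(\psi, 1)$). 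This is a section of $pr_1$, and together with the inclusion of $H^3(\widehat{A},\mathbb{C}^*)$ it gives the claimed direct product decomposition; I should check the section is a group homomorphism with respect to the natural group structures, which again reduces to triviality of $c$.

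For part (2), I would start from Proposition \ref{prop: characterization of image}, which gives $\ker(PW^2) = pr_1(\mathbf{Sez}_{\text{br}}(A,q))$: a class $[\lambda] \in H^2_{\text{ab}}(\widehat{A}, A_0)$ lies in $\ker(PW^2)$ iff $\lambda$ is a symmetric $2$-cocycle admitting a braided zesting $(\lambda, \omega, t)$. First, such a $\lambda$ must be symmetric and satisfy $\lambda \cup_c \lambda = 0$; in the Tannakian case $c$ is trivial so $\lambda \cup_c \lambda = 0$ automatically, and thus membership in $H^2_{\text{sym}}(\widehat{A}, A_0)$ is the only constraint from that side. Next, by part (1) the associated $\omega$ can be taken to be a genuine $3$-cocycle in $Z^3(\widehat{A}, \mathbb{C}^*)$. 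Condition \eqref{eq:BZ2} for this $\omega$ with some $t = l$ is, by Proposition \ref{prop:Obstruction1}, precisely the statement that $\psi_{\widehat{A}}(\omega) = 1$, i.e. $[\omega] \in H^3_{\text{sym}}(\widehat{A}, \mathbb{C}^*)$; and given such an $\omega$ and $l$, condition \eqref{eq:BZ3} is solvable for a suitable $t$ exactly when $[O_2(\omega, l)] = [\lambda]$ in $H^2(\widehat{A}, A_0)$ by Proposition \ref{Prop: obstruccion O2}, i.e. $[\lambda] = s([\omega]) \in \operatorname{Im}(s)$. Chasing these equivalences: $[\lambda] \in \ker(PW^2)$ iff $[\lambda]$ is symmetric and there is $[\omega] \in H^3_{\text{sym}}(\widehat{A}, \mathbb{C}^*)$ with $s([\omega]) = [\lambda]$, which is exactly $[\lambda] \in \operatorname{Im}(s) \cap H^2_{\text{sym}}(\widehat{A}, A_0)$. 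One direction (membership forces $[\lambda] \in \operatorname{Im}(s)$ and symmetric) and the converse (given $[\omega]$ with $s([\omega]) = [\lambda]$ symmetric, assemble $(\lambda, \omega, t)$ into a braided zesting datum using the solvability results) both go through this dictionary.

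The main obstacle I anticipate is bookkeeping rather than conceptual: carefully matching the cocycle-level identities (equations \eqref{eq:BZ2} and \eqref{eq:BZ3}) against the cohomology-level statements of Propositions \ref{prop:Obstruction1} and \ref{Prop: obstruccion O2}, making sure the independence-of-representative claims (change of $\lambda$ by $d^1(\psi)$, change of $l$ by a class in $C^1(\widehat{A}, \widehat{A})$) are compatible with the equivalence relation defining $\mathbf{Sez}_{\text{br}}(A,q)$, and verifying that $\operatorname{Im}(s)$ does land inside $H^2_{\text{sym}}(\widehat{A}, A_0)$ so the intersection is not redundantly stated. Once the triviality of $c$ in the Tannakian case is used to kill $\lambda \wedge_c \lambda$ and all the wedge-correction terms in the equivalence relation, everything else is a direct translation.
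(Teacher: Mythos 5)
Your proposal is correct and follows essentially the same route as the paper: part (1) uses the triviality of the bicharacter $c$ in the Tannakian case to kill $\lambda\wedge_c\lambda$ and all wedge-correction terms, so that associative zesting data are just pairs in $Z^2(\widehat{A},A_0)\times Z^3(\widehat{A},\mathbb{C}^*)$, and part (2) translates the existence of a braided zesting for $\lambda$ into the conditions of Propositions \ref{prop:Obstruction1} and \ref{Prop: obstruccion O2}, i.e.\ $[\omega]\in H^3_{\text{sym}}$ and $[\lambda]=s([\omega])$, yielding $\ker(PW^2)=\operatorname{Im}(s)\cap H^2_{\text{sym}}(\widehat{A},A_0)$. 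The extra detail you supply (the explicit section $[\lambda]\mapsto[(\lambda,1)]$ and the representative-independence checks) is consistent with, and slightly more explicit than, the paper's argument.
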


\begin{proof}
Given that the twist is trivial on $A_0$, we can consider $c$ as trivial. Consequently, an associative zesting corresponds to a pair $(\lambda, \omega) \in Z^2(\widehat{A}, A_0) \times Z^3(\widehat{A}, \mathbb{C}^*)$. Hence $\mathbf{Sez}(A,q) = H^3(\widehat{A}, \mathbb{C}^*) \times H^2(\widehat{A}, A_0)$.

Now, consider $(\lambda, \omega)$ as an associative zesting. It follows from Proposition \ref{prop:Obstruction1} that $\omega \in Z^3_{\text{sym}}(\widehat{A}, \mathbb{C}^*)$. Furthermore, Proposition \ref{Prop: obstruccion O2} implies that $\overline{\lambda} = s(\omega)$, leading to the conclusion that $\lambda = s(\omega) \in \operatorname{Im}(s) \cap H^2_{\text{sym}}(\widehat{A}, A_0)$. Conversely, if $\lambda \in \operatorname{Im}(s) \cap H^2_{\text{sym}}(\widehat{A}, A_0)$, then, by the same propositions, there exists an $\omega$ such that $(\lambda, \omega)$ defines an associative zesting.
\end{proof}

\begin{remark}\label{remark: algritmo para caso tannakiano}
We have the following algorithmic procedure for computing the braided zesting in the Tannakian case:
\begin{enumerate}
    \item Compute $H^3_{\text{sym}}(\widehat{A}, \mathbb{C}^*)$.
    \item For $\omega\in Z^3_{ab}(\widehat{A},\mathbb C^*)$ compute $l \in C^2(\widehat{A}, \mathbb{C}^*)$ such that equation \eqref{eq:BZ2} holds.
    \item Compute $O_2(\omega, l)$ and check if $O_2(\omega, l)$ is symmetric.
\end{enumerate}
If $O_2(\omega, l)$ is not symmetric, then there is no braided zesting datum of the form $(\lambda, \omega, l)$ for any 3-cocycle cohomologous to $\omega$. If $O_2(\omega, l)$ is symmetric, then $(O_2(\omega, l), \omega, l)$ is a braided zesting datum, and every braided zesting datum is constructed in this way. 
\end{remark}

\subsubsection{Tannakian Zesting by Klein Group}
As an example of Tannakian zesting, we will consider $A = \mathbb{Z}/2 \times \mathbb{Z}/2$. Our interest in this case lies in the fact that we will later see it corresponds to a family of Verlinde modular data.

\begin{proposition}\label{Prop:Tannakian zesting Klein}
For the pre-metric group $(\mathbb{Z}/2 \times \mathbb{Z}/2, 1)$ with the trivial quadratic form, we have that $\mathbf{Sez}_{\text{br}}(\mathbb{Z}/2 \times \mathbb{Z}/2, 1) = H^3_{\text{ab}}(\widehat{A}, \mathbb{C}^*) = \mathbb{Z}/4 \times \mathbb{Z}/4 \times \mathbb{Z}/2$.
\end{proposition}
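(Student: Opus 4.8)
The plan is to show that every symmetric 2-cocycle $\lambda \in H^2_{\text{ab}}(\widehat{A}, A_0)$ admits a braided zesting in this case, which by the short exact sequence of Proposition~\ref{prop: characterization of image} forces $\mathbf{Sez}_{\text{br}}(\mathbb{Z}/2\times\mathbb{Z}/2,1)$ to be an extension of $\ker(PW^2) = H^2_{\text{ab}}(\widehat{A},A_0)$ by $H^3_{\text{ab}}(\widehat{A},\mathbb{C}^*)$. But in the Tannakian case the bicharacter $c$ is trivial, so $PW^2(\lambda) = \lambda\wedge_c\lambda$ vanishes automatically as a $4$-cochain; hence $\ker(PW^2)$ is all of $H^2_{\text{ab}}(\widehat{A},A_0)$. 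The subtlety is that the relevant target in Proposition~\ref{prop: characterization of image} is $\widetilde{H}^4_{\text{ab}}$, and I need the full $PW^2$ map (not just its $\lambda\wedge_c\lambda$ component, but also the component $\chi_1\mapsto\langle\chi_1,\lambda(\chi_2,\chi_3)\rangle$) to vanish in cohomology. So first I would observe $A_0 = A = \mathbb{Z}/2\times\mathbb{Z}/2$ here (since $q$ is trivial, the bicharacter $b$ is trivial and its kernel is everything), and $c$ is trivial.

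Next I would use the algorithmic route of Remark~\ref{remark: algritmo para caso tannakiano}. By Remark~\ref{Remark braided and abelian cocyles}, a braided zesting datum with trivial $\lambda$ is exactly an abelian $3$-cocycle of $A$; Proposition~\ref{prop:abelian3CocycleOfKleinGroup} gives us $H^3_{\text{ab}}(\mathbb{Z}/2\times\mathbb{Z}/2,\mathbb{C}^*) = \mathbb{Z}/4\oplus\mathbb{Z}/4\oplus\mathbb{Z}/2$ together with explicit representatives $(\omega_{a,b,c}, t_{a,b,c})$. So the claim will follow once I show: (i) $H^3_{\text{sym}}(\widehat{A},\mathbb{C}^*)$ surjects onto $H^2_{\text{ab}}(\widehat{A},A_0)$ via the map $s$ of the preceding lemma — equivalently, every class in $H^2_{\text{ab}}(\widehat{A},A_0) = \operatorname{Ext}(\widehat{A},A_0)$ arises as $O_2(\omega,l)$ for a suitable symmetric $3$-cocycle $\omega$ — and (ii) $H^2_{\text{ab}}(\widehat{A},A_0)$ injects into $\mathbf{Sez}_{\text{br}}$, which is automatic from the exact sequence. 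For (i), since $A_0 = \mathbb{Z}/2\times\mathbb{Z}/2$ and $\widehat{A}\cong\mathbb{Z}/2\times\mathbb{Z}/2$, we have $\operatorname{Ext}(\widehat{A},A_0)\cong(\mathbb{Z}/2)^4$; I would exhibit, for each generator, an explicit abelian $3$-cocycle $(\omega,t)$ whose associated symmetric $2$-cocycle $\lambda = O_2(\omega,l)$ realizes it — the cocycles $t_{a,b,c}$ with $a$ or $b$ equal to $2$ (order-two classes) are the natural candidates since these have $O_2$ landing in the $2$-torsion.

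Alternatively — and this is probably cleaner — I would argue directly that $PW^2 = 0$ identically on $H^2_{\text{ab}}(\widehat{A},A_0)$. Since $c$ is trivial, $\lambda\wedge_c\lambda = 1$ as a cochain, so the first component of $PW^2(\lambda)$ is trivial on the nose. The second component $\gamma(\chi_1\mid\chi_2,\chi_3) = \langle\chi_1,\lambda(\chi_2,\chi_3)\rangle$ is a genuine cochain, but I claim its class in $\widetilde{H}^4_{\text{ab}}(\widehat{A},\mathbb{C}^*)$ is trivial: because $\lambda$ is a \emph{symmetric} $2$-cocycle valued in $A_0 = A$, the pairing $\langle\chi_1,\lambda(\chi_2,\chi_3)\rangle$ is cohomologically trivial precisely when $\lambda\cup_c\lambda = 0$ and the associated quadratic refinement exists — and for the Klein four group with trivial $c$ one checks that the obstruction group $\widetilde{H}^4_{\text{ab}}(\widehat{A},\mathbb{C}^*)$ receives no nontrivial contribution from $\operatorname{Ext}(\widehat{A},A)$ with trivial pairing. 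Concretely, every symmetric extension $0\to A_0\to E\to\widehat{A}\to 0$ splits after a suitable choice (as abelian groups both sides are elementary abelian $2$-groups and the extension class lies in $\operatorname{Ext}(\mathbb{Z}/2\times\mathbb{Z}/2,\mathbb{Z}/2\times\mathbb{Z}/2)$), and one verifies $PW^2$ kills each generator by the explicit formula $\omega_\nu,t_\nu$ of \eqref{eq: abelian 3 cohomology of cyclic groups} together with the decomposition isomorphism $T$ from \cite[Proposition 4.3]{Galindo-Jaramillo}.

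The main obstacle is step (i)/the $PW^2 = 0$ verification: one must be careful that triviality of the bicharacter $c$ does \emph{not} immediately give triviality of $PW^2$, because the $\gamma$-component $\langle\chi_1,\lambda(\chi_2,\chi_3)\rangle$ survives, and one genuinely needs that for this particular pair $(\widehat{A},A_0) = (\mathbb{Z}/2\times\mathbb{Z}/2,\mathbb{Z}/2\times\mathbb{Z}/2)$ the relevant obstruction in $\widetilde{H}^4_{\text{ab}}$ vanishes. I expect this to reduce, via the cyclic-group formulas \eqref{eq: abelian 3 cohomology of cyclic groups} and the bilinear-plus-quadratic decomposition, to a short finite check over the four generating symmetric $2$-cocycles, after which the short exact sequence $0\to H^3_{\text{ab}}(\widehat{A},\mathbb{C}^*)\to\mathbf{Sez}_{\text{br}}\to\ker(PW^2)\to 0$ combined with $\ker(PW^2) = H^2_{\text{ab}}(\widehat{A},A_0)$ — and the canonical splitting coming from part (1) of the preceding Proposition in the Tannakian case — yields $\mathbf{Sez}_{\text{br}}(\mathbb{Z}/2\times\mathbb{Z}/2,1) = H^3_{\text{ab}}(\widehat{A},\mathbb{C}^*) = \mathbb{Z}/4\times\mathbb{Z}/4\times\mathbb{Z}/2$. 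Wait — I should double-check the final identification: the statement equates $\mathbf{Sez}_{\text{br}}$ with $H^3_{\text{ab}}$ \emph{alone}, so in fact the claim is the stronger assertion that $H^2_{\text{ab}}(\widehat{A},A_0)$ contributes nothing, i.e. $\ker(PW^2) = 0$; this means I actually need $PW^2$ to be \emph{injective} on the nontrivial part, so the correct line of argument is to show $\operatorname{Im}(s)\cap H^2_{\text{sym}}(\widehat{A},A_0) = 0$ for this group, equivalently that no nontrivial symmetric $2$-cocycle $\lambda$ admits a braided zesting — which I would verify by running the obstruction of Proposition~\ref{Prop: obstruccion O2} on each of the four generators and finding $[O_2(\omega,l)]\neq[\lambda]$ in every case, this being the genuine computational heart of the proof.
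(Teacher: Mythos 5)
There is a genuine gap, and in fact the argument you develop in most detail points toward the wrong conclusion. Your ``cleaner'' route asserts that $PW^2$ vanishes identically on $H^2_{\text{ab}}(\widehat{A},A_0)$ because the bicharacter $c$ is trivial and because ``the obstruction group $\widetilde{H}^4_{\text{ab}}(\widehat{A},\mathbb{C}^*)$ receives no nontrivial contribution'' from the $\gamma$-component $\langle\chi_1,\lambda(\chi_2,\chi_3)\rangle$. That claim is false, and it would make $\mathbf{Sez}_{\text{br}}$ an extension of $\operatorname{Ext}(\widehat{A},A_0)\cong(\mathbb{Z}/2)^4$ by $H^3_{\text{ab}}$, contradicting the very statement to be proved (as you eventually notice). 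What actually happens — and this is the content of the paper's proof — is that the $\gamma$-component kills \emph{every} nontrivial symmetric class: one lists the eight representative $3$-cocycles $\omega_{a,b,c}$ of $H^3(\mathbb{Z}/2\times\mathbb{Z}/2,\mathbb{C}^*)$ from \eqref{eq:represntative 3-cocycle of Klein group group}, exhibits a $2$-cochain $l_{a,b,c}$ satisfying \eqref{eq:BZ2}, and computes $O_2(\vec{x}\mid\vec{y},\vec{z})=(-1)^{c(x_1y_1z_2+x_2y_2z_1)}$, i.e.\ the $2$-cocycle $\lambda_c(\vec{y},\vec{z})=(cy_1z_2,cy_2z_1)$. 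For $c\neq 0$ this class has nontrivial commutator (antisymmetrization) pairing, hence contains no symmetric representative, and for $c=0$ it is trivial; so by Proposition \ref{Prop: obstruccion O2} the only symmetric $\lambda$ admitting a braided zesting is the trivial one, and braided zesting data reduce to abelian $3$-cocycles, giving $\mathbf{Sez}_{\text{br}}=H^3_{\text{ab}}(\widehat{A},\mathbb{C}^*)=\mathbb{Z}/4\times\mathbb{Z}/4\times\mathbb{Z}/2$.

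Your last sentence does pivot to the correct assertion ($\ker(PW^2)=0$, equivalently $\operatorname{Im}(s)\cap H^2_{\text{sym}}(\widehat{A},A_0)=0$), which is the paper's strategy via Remark \ref{remark: algritmo para caso tannakiano}, but it is left entirely unexecuted, and the proposed check is mis-indexed: $O_2(\omega,l)$ is computed from $\omega$, not from $\lambda$, so ruling out a fixed nontrivial symmetric $\lambda$ requires quantifying over all candidate partner $3$-cocycles $\omega$ — in practice one computes the map $s$ on the finitely many classes of $H^3_{\text{sym}}(\widehat{A},\mathbb{C}^*)$ and checks its image meets the symmetric classes only in zero — rather than ``running the obstruction on each of the four generators'' of $\operatorname{Ext}(\widehat{A},A_0)$. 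Since the explicit cocycle-level computation is the entire mathematical content here and the portion of your argument that is worked out in detail is incorrect, the proposal does not establish the proposition.
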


\begin{proof}
We will apply the procedure outlined in Remark \ref{remark: algritmo para caso tannakiano}.

Step 1: Obtain 3-cocycles. \\
By \cite[Proposition 3.9]{huang2014braided}, $H^3(\Z/2 \times \Z/2,\mathbb{C}^*)=(\Z/2)^{\times 3}$, and a set of representative normalized 3-cocycles is given by
\begin{equation}\label{eq:represntative 3-cocycle of Klein group group}
\omega_{a,b,c}(\vec{x}, \vec{y}, \vec{z}) = (-1)^{ax_1y_1z_1 + cx_2y_1z_1+bx_2y_2z_2},
\end{equation}
where $a,b,c \in \Z/2$ and $(\vec{x},\vec{y},\vec{z}) \in (\Z/2\times\Z/2)^{\times 3}$.

Step 2: Obtain 2-cochains.
We also define a 2-cochain $l_{a,b,c}$ satisfying \eqref{eq:AlternatingProduct} as
\[
l_{a,b,c,\zeta}( \vec{x}, \vec{y} ) = \zeta^{ax_1y_1+cx_2y_1+bx_2y_2},
\]
where $\zeta^2 = -1$.

Step 3: Compute $O_2(-\ | \vec{y}, \vec{z})$. \\
The character $O_2(-\ | \vec{y}, \vec{z})$ is expressed as
\[
O_2 (\vec{x} | \vec{y}, \vec{z}) = (-1)^{c ( x_1y_1z_2 + x_2y_2z_1)},
\]
which defines a 2-cocycle $O_2(w_{a,b,c}) \in H^2(\Z/2 \times \Z/2, \Z/2 \times \Z/2 )$, given by
\[
\lambda_{c}(\vec{y},\vec{z})=(cy_1z_2,cy_2z_1).
\]

This 2-cocycle is symmetric if and only if $c=0$. Since $c=0$, $\lambda$ must be trivial, and thus as we mentioned in Remark \ref{Remark braided and abelian cocyles}, a braided zesting datum is uniquely determined by an abelian 3-cocycle.
\end{proof}

\subsection{Zesting by Klein four group}
As a final example, we will delve into the details of a case that is neither cyclic nor Tannakian. We have again chosen $\mathbb{Z}/2 \times \mathbb{Z}/2$, but this time with $q(\vec{x})=(-1)^{x_1^2+x_2^2}$ and associated bicharacter 
\begin{equation}\label{braiding Dn even}
    c(\vec{x}, \vec{y}) = (-1)^{x_1y_1 + x_2y_2}.
\end{equation}
The primary reason for developing this example in detail is that among the modular categories of the form $\mathcal{C}(\mathfrak{g}, k)$, the only one with a non-cyclic and non-Tannakian pointed subcategory is $\mathcal{C}(\mathfrak{so}(4n), 2s)$. The pointed part is precisely $\mathbb{Z}/2 \times \mathbb{Z}/2$ with a twist given by $\theta(\vec{x}) = (-1)^{x_1^2 + x_2^2}$ for all $\vec{x} = (x_1, x_2) \in \mathbb{Z}/2 \times \mathbb{Z}/2$, which coincides with $\theta_{\vec{x}} = c(\vec{x}, \vec{x})$.

\subsubsection{Symmetric 2-Cocycles of the Klein Four Group}
Let $A_1$, $A_2$, and $B$ be abelian groups. According to \cite{karpilovsky1985projective}, we have $H^2_{\text{Sym}}(A_1 \times A_2, B) \cong H^2_{\text{Sym}}(A_1, B) \oplus H^2_{\text{Sym}}(A_2, B)$. In our case of interest, we have that $H^2_{\text{Sym}}(\mathbb{Z}/2 \times \mathbb{Z}/2, \mathbb{Z}/2 \times \mathbb{Z}/2) = (\mathbb{Z}/2 \times \mathbb{Z}/2)^{\times 2}$, and a representative 2-cocycle for each pair $(\vec{m}, \vec{n}) \in (\mathbb{Z}/2 \times \mathbb{Z}/2)^{\times 2}$ is given by:
\begin{equation}\label{eq: 2-cocycle lambda}
    \lambda_{\vec{m}, \vec{n}}(\vec{x}, \vec{y}) = (m_1 x_1 y_1 + n_1 x_2 y_2, m_2 x_1 y_1 + n_2 x_2 y_2),
\end{equation}
where $\vec{x} = (x_1, x_2)$ and $\vec{y} = (y_1, y_2)$ belong to $\mathbb{Z}/2 \times \mathbb{Z}/2$.

\subsubsection{Computing the Partial Obstructions}
Recall that we are using the bicharacter defined in equation \eqref{braiding Dn even} and the partial obstruction developed in Section \ref{subsecton: sequencial obstructions}. We aim to identify which of the symmetric 2-cocycles $\lambda_{\vec{m}, \vec{n}} \in Z^2 ( \Z/2 \times \Z/2, \Z/2 \times \Z/2)$ and 3-cochains $\omega\in C^3( \Z/2 \times \Z/2, \C^*)$, denoted by $(\lambda_{\vec{m}, \vec{n}}, \omega)$, admit a braided zesting. To do this, we first establish the following proposition:
\begin{proposition} \label{prop:AssocZestingKleinNonTanakian}
The map $\cup_c: H^2_{\text{sym}}(\mathbb{Z}/2 \times \mathbb{Z}/2,\mathbb{Z}/2 \times \mathbb{Z}/2) \to H^4(\mathbb{Z}/2 \times \mathbb{Z}/2, \mathbb{C}^*)$ is trivial. A representative associative zesting datum for each $(\vec{m},\vec{n}) \in (\mathbb{Z}/2 \times \mathbb{Z}/2)^{\times 2}$ is given by
\begin{align}
\lambda_{\vec{m},\vec{n}}(\vec{x}, \vec{y}) &= (m_1 x_1 y_1 + n_1 x_2 y_2, m_2 x_1 y_1 + n_2 x_2 y_2), \label{eq:AssocZestingKlein2Cocycle} \\
\omega_{\vec{m},\vec{n}}(\vec{x}, \vec{y}, \vec{z}) &= \zeta_{4}^{(m_1+m_2)x_1y_1z_1 +(n_1+n_2)x_2y_2z_2 +(\vec{m}\cdot\vec{n})x_1y_1z_2+(\vec{m}\cdot\vec{n})x_2y_2z_1},\label{eq:AssocZestingKlein3Cochain}
\end{align}
where $\zeta_4$ is a fourth root of unity.
\end{proposition}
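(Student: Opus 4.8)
The plan is to verify Proposition~\ref{prop:AssocZestingKleinNonTanakian} by a direct construction: exhibit the explicit pair $(\lambda_{\vec m,\vec n},\omega_{\vec m,\vec n})$ and check that it satisfies the associative zesting condition $d^3(\omega_{\vec m,\vec n}) = \lambda_{\vec m,\vec n}\wedge_c\lambda_{\vec m,\vec n}$ with the bicharacter $c$ from \eqref{braiding Dn even}. Since $H^2_{\text{sym}}(\mathbb Z/2\times\mathbb Z/2,\mathbb Z/2\times\mathbb Z/2)=(\mathbb Z/2\times\mathbb Z/2)^{\times 2}$ is already identified (with representatives \eqref{eq: 2-cocycle lambda}), triviality of $\cup_c$ is equivalent to the assertion that for \emph{each} of the sixteen classes $(\vec m,\vec n)$ the 4-cocycle $\lambda_{\vec m,\vec n}\wedge_c\lambda_{\vec m,\vec n}$ is a coboundary, and producing an explicit $\omega_{\vec m,\vec n}$ killing it simultaneously proves triviality and furnishes the desired associative zesting datum. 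So the whole proposition reduces to one cocycle identity, checked on $(\mathbb Z/2\times\mathbb Z/2)^{\times 3}$ (really on generators, by multilinearity-type arguments, though one must be careful since $\omega$ is only a cochain).

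First I would compute the right-hand side: by definition $\lambda_{\vec m,\vec n}\wedge_c\lambda_{\vec m,\vec n}(\vec x_1,\vec x_2,\vec x_3,\vec x_4) = c(\lambda_{\vec m,\vec n}(\vec x_1,\vec x_2),\lambda_{\vec m,\vec n}(\vec x_3,\vec x_4))$, and with $c(\vec u,\vec v)=(-1)^{u_1v_1+u_2v_2}$ and $\lambda_{\vec m,\vec n}(\vec x,\vec y)=(m_1x_1y_1+n_1x_2y_2,\; m_2x_1y_1+n_2x_2y_2)$ this is a completely explicit sign depending polynomially on the eight coordinates. Next I would compute $d^3(\omega_{\vec m,\vec n})$ from formula \eqref{cochains G}: writing $\omega_{\vec m,\vec n}(\vec x,\vec y,\vec z)=\zeta_4^{P(\vec x,\vec y,\vec z)}$ with $P$ the displayed cubic-ish exponent, $d^3$ produces $\zeta_4$ to the alternating sum $P(\vec y,\vec z,\vec w)-P(\vec x\vec y,\vec z,\vec w)+P(\vec x,\vec y\vec z,\vec w)-P(\vec x,\vec y,\vec z\vec w)+P(\vec x,\vec y,\vec z)$, where addition in $\mathbb Z/2\times\mathbb Z/2$ means the exponents must be handled mod the relation $\zeta_4^4=1$; the "carry" terms coming from $(a+a')$-type expansions modulo $2$ are exactly what forces the appearance of $\zeta_4$ (a primitive fourth root) rather than a sign, matching the phenomenon already visible in \eqref{eq: abelian 3 cohomology of cyclic groups} and in Example~\ref{Ex: fermion zesting}. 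I would then simplify both sides to a single expression of the form $\zeta_4^{(\cdots)}$ and verify equality of exponents modulo $4$; the coefficients $(m_1+m_2)$, $(n_1+n_2)$, $\vec m\cdot\vec n$ in \eqref{eq:AssocZestingKlein3Cochain} are precisely those that make this work, and I expect the check to factor as a sum of the $A_1=\langle\vec e_1\rangle$ and $A_2=\langle\vec e_2\rangle$ contributions (each a cyclic $\mathbb Z/2$ computation) plus the cross term $x_1y_1z_2+x_2y_2z_1$ governed by $\vec m\cdot\vec n$, using the decomposition $H^2_{\text{Sym}}(A_1\times A_2)=H^2_{\text{Sym}}(A_1)\oplus H^2_{\text{Sym}}(A_2)$ invoked earlier.

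The main obstacle will be the bookkeeping of the mod-$2$ reductions inside a mod-$4$ exponent: because $\omega$ takes values in $\mathbb C^*$ but its arguments live in $\mathbb Z/2\times\mathbb Z/2$, the expansions $\omega(\vec x\vec y,\vec z,\vec w)$ require rewriting products $x_iy_i \bmod 2$, and the discrepancy $x_i+y_i-[x_i+y_i\bmod 2]=2x_iy_i$ contributes nontrivially when multiplied by a factor that is itself only defined mod $2$ in the $\zeta_4$-exponent — these are the terms that generate the fourth root. Getting every such carry term accounted for, and confirming they assemble into exactly the cross-terms $(\vec m\cdot\vec n)(x_1y_1z_2 + x_2y_2z_1)$ and the diagonal terms on the right-hand side, is the delicate part; I would organize it by first doing the two "pure" cases $(\vec m,\vec 0)$ and $(\vec 0,\vec n)$ (which reduce to the cyclic $\mathbb Z/2$ formula of \eqref{eq: abelian 3 cohomology of cyclic groups}) and then handling the genuinely new mixed contribution, after which $\zeta_4^2=-1$ lets one read off that $\cup_c$ lands in the trivial class. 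Finally, I would remark that since this exhibits an associative zesting datum for every $(\vec m,\vec n)$, the map $\cup_c$ is identically trivial, completing the proof.
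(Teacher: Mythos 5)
Your proposal is correct and takes essentially the same route as the paper: you verify directly that $d^3(\omega_{\vec m,\vec n})=\lambda_{\vec m,\vec n}\wedge_c\lambda_{\vec m,\vec n}$ for the displayed representatives, which simultaneously shows each class in $H^2_{\text{sym}}$ has trivial cup square, and your organization into the two ``pure'' diagonal contributions plus the $(\vec m\cdot\vec n)$ cross term is exactly the paper's decomposition into the elementary cochains $\omega_{(1,1,1)},\omega_{(2,2,2)},\omega_{(1,1,2)}\omega_{(2,2,1)}$ and $\alpha_{i,j,k,l}$. Your explicit tracking of the mod-$2$ carries inside the $\zeta_4$-exponent is precisely the case-by-case check the paper performs (and indeed yields exponent $2x_iy_jz_kw_l$ mod $4$ in each elementary case), so no gap remains.
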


\begin{proof}

We need to show that the pairs $(\lambda_{\vec{m},\vec{n}}, \omega_{\vec{m},\vec{n}})$ given by equations \eqref{eq:AssocZestingKlein2Cocycle} and \eqref{eq:AssocZestingKlein3Cochain} define an associative zesting datum, that is, $d^3(\omega_{\vec{m},\vec{n}}) = \lambda_{\vec{m},\vec{n}} \wedge_c \lambda_{\vec{m},\vec{n}}$.

Taking the bicharacter defined in equation \eqref{braiding Dn even}, we have
\begin{equation}\label{4 cocycle Dn even n}
  \lambda_{\vec{m}, \vec{n}}\wedge_c \lambda_{\vec{m}, \vec{n}} = (-1)^{(m_1+m_2)x_1y_1z_1w_1 + (n_1+n_2) x_2y_2z_2w_2 + (\vec{m}\cdot \vec{n})x_1y_1z_2w_2+(\vec{m}\cdot \vec{n})x_2y_2z_1w_1}.
\end{equation}
To simplify computations, we define $\alpha_{i,j,k,l} \in C^4(\mathbb{Z}/2 \times \mathbb{Z}/2, \mathbb{C}^*)$ for $i,j,k,l \in \{1,2\}$ as
\begin{equation}\label{4 cofronteras base}
  \alpha_{i,j,k,l}(\vec{x}, \vec{y}, \vec{z}, \vec{w}) = (-1)^{x_iy_jz_kw_l}.
\end{equation}
We also define 3-cochains $\omega_{(i,j,k),\xi} \in C^3(\Z/2 \times \Z/2, \mathbb{C}^*)$ as
\begin{equation}
  \omega_{(i,j,k)}(\vec{x}, \vec{y}, \vec{z}) = \zeta_4^{x_iy_jz_k}.
\end{equation}
Using these definitions, we can rewrite $\lambda_{\vec{m}, \vec{n}}\wedge_c \lambda_{\vec{m}, \vec{n}}$ and $\omega_{(i,j,k)}$ as
\begin{align*}
  \lambda_{\vec{m}, \vec{n}}\wedge_c \lambda_{\vec{m}, \vec{n}} &= \left( \alpha_{1,1,1,1}\right)^{m_1+m_2} \left(\alpha_{2,2,2,2} \right)^{n_1+n_2} \left(\alpha_{1,1,2,2}\alpha_{2,2,1,1}\right)^{\vec{m}\cdot\vec{n}}, \\
  \omega_{\vec{m},\vec{n}} &= \left(\omega_{(1,1,1)
  }\right)^{m_1+m_2} \left(\omega_{(2,2,2)}\right)^{n_1+n_2} \left(\omega_{(1,1,2)}\omega_{(2,2,1)}\right)^{\vec{m}\cdot\vec{n}}.
\end{align*}
To verify that $d^3(\omega_{(\vec{m},\vec{n})}) = \lambda_{\vec{m}, \vec{n}}\wedge_c \lambda_{\vec{m}, \vec{n}}$, it suffices to show that $d^3(\omega_{1,1,1}) = \alpha_{1,1,1,1}$, $d^3(\omega_{2,2,2}) = \alpha_{2,2,2,2}$, etc. For instance, we have
\begin{align*}
  d^3(\omega_{(1,1,2)})(\vec{x}, \vec{y}, \vec{z}, \vec{w}) &= \frac{\zeta_4^{y_1z_1w_2}\zeta_4^{x_1(y_1+z_1)w_2}\zeta_4^{x_1y_1z_2}}{\zeta_4^{(x_1+y_1)z_1w_2}\zeta_4^{x_1y_1(z_2+w_2)}} \\
  &= \zeta_4^{(y_1 - (x_1+y_1))z_1w_2 + (z_2 - (z_2+w_2))x_1y_1 + (y_1+z_1)x_1w_2} \\
  &= \zeta_4^{y_1z_1w_2 - x_1z_1w_2 + x_1y_1z_2 - x_1y_1w_2 + x_1y_1w_2} \\
  &= \zeta_4^{y_1z_1w_2 - x_1z_1w_2 + x_1y_1z_2} \\
  &= \alpha_{1,1,2,2}(\vec{x}, \vec{y}, \vec{z}, \vec{w})
\end{align*}

Note that if $x_1 = y_1 = z_2 = w_2 = 1$, then
\[
  d^3(\omega_{(1,1,2)})(\vec{x}, \vec{y}, \vec{z}, \vec{w}) = \alpha_{1,1,2,2}(\vec{x}, \vec{y}, \vec{z}, \vec{w}).
\]
The other cases follow similarly by case-by-case computation.
\end{proof}

Having established Proposition \ref{prop:AssocZestingKleinNonTanakian}, we can now proceed to describe the equivalence classes of braided zestings and representatives of modular data with $A = (\mathbb{Z}/2)^{\times 2}$ and bicharacter \eqref{braiding Dn even}.

\begin{theorem}\label{Braided zesting non tanakian klein}
Let $(\mathbb{Z}/2 \times \mathbb{Z}/2,q)$ be the pre-metric group with $q(\vec{x})=(-1)^{x_1^2+x_2^2}$.
Then $\operatorname{ker}(PW^2) = \mathbb{Z}/2 \times \mathbb{Z}/2$, that is, the $H^2_{ab}(\widehat{A}, \mathbb{C}^*)$-orbits of equivalence classes of braided zestings of $(\mathbb{Z}/2 \times \mathbb{Z}/2,q)$ are parametrized by $\mathbb{Z}/2 \times \mathbb{Z}/2$. Equivalence classes of braided zesting data can be parametrized by $\mathbb Z/8\times \mathbb Z/8\times \mathbb Z/2$, and representatives for a triple $(m, n,c) \in \mathbb{Z}/4 \times \mathbb{Z}/4\times \mathbb Z/2$ is given by
\begin{align}
    \lambda_{m, n}(\vec{x}, \vec{y}) &= \lambda_{(m, 0), (0, n)}(\vec{x}, \vec{y}) = (m x_1 y_1 \mod 2, n x_2 y_2\mod 2), \label{zesting lambda}\\
    \omega_{(m, n)}(\vec{x}, \vec{y}, \vec{z}) &= \zeta_8^{2(m x_1 y_1 z_1 + n x_2 y_2 z_2)}, \label{zesting omega}\\
    t_{(m, n,c)}(\vec{x}, \vec{y}) &= \zeta_8^{m x_1 y_1 + n x_2  y_2}(-1)^{cx_1y_2}, \label{zesting t}
\end{align}
where $\zeta_8$ is a primitive eighth root of unity.
\end{theorem}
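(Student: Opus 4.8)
The plan is to run the algorithmic procedure of Remark~\ref{rmk: procedimiento para calcular braided zestings} to the end, using Proposition~\ref{prop:AssocZestingKleinNonTanakian} as the starting point, and then assemble the parametrization. First I would invoke Proposition~\ref{prop:AssocZestingKleinNonTanakian}, which already supplies, for each $(\vec m,\vec n)\in(\mathbb Z/2\times\mathbb Z/2)^{\times 2}$, an associative zesting datum $(\lambda_{\vec m,\vec n},\omega_{\vec m,\vec n})$ with $\lambda\cup_c\lambda=0$; thus step (1) of the algorithm is done and $\ker(\cup_c)=H^2_{\mathrm{sym}}(\widehat A,A_0)=(\mathbb Z/2)^{\times 4}$. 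Next, for step (2), I would compute $\Psi(\omega_{\vec m,\vec n})$, the alternating product over $\mathbb S_3$ from Remark~\ref{rmk:Psi}, and determine when it lies in the image of $\Psi$ on $Z^3(\widehat A,\mathbb C^*)$; since $H^3(\mathbb Z/2\times\mathbb Z/2,\mathbb C^*)=(\mathbb Z/2)^{\times 3}$ and the only alternating trilinear map $\wedge^3(\mathbb Z/2)^2\to\mathbb C^*$ is zero, this reduces to checking $\Psi(\omega_{\vec m,\vec n})=1$ identically. A direct computation using the factorization $\omega_{\vec m,\vec n}=(\omega_{(1,1,1)})^{m_1+m_2}(\omega_{(2,2,2)})^{n_1+n_2}(\omega_{(1,1,2)}\omega_{(2,2,1)})^{\vec m\cdot\vec n}$ shows the ``diagonal'' factors $\omega_{(1,1,1)},\omega_{(2,2,2)}$ are already symmetric (hence killed by $\Psi$) while the mixed factors $\omega_{(1,1,2)}\omega_{(2,2,1)}$ contribute a potentially nontrivial alternating term governed by $\vec m\cdot\vec n$; one then adjusts $\omega$ by a 3-cocycle of the form \eqref{eq:represntative 3-cocycle of Klein group group} to kill it precisely when $\vec m\cdot\vec n=0$, i.e.\ when $\lambda_{\vec m,\vec n}$ is of the diagonal form $\lambda_{(m,0),(0,n)}$.

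Having reduced to $\lambda=\lambda_{m,n}:=\lambda_{(m,0),(0,n)}$ with $\omega_{(m,n)}=\zeta_8^{2(mx_1y_1z_1+nx_2y_2z_2)}$ as in \eqref{zesting omega}, I would carry out steps (3) and (4): solve \eqref{eq:BZ2} for a 2-cochain $l\in C^2(\widehat A,\mathbb C^*)$ — here $l=t_{(m,n,c)}$ with the $(-1)^{cx_1y_2}$ factor free to choose — and then compute the obstruction cocycle $O_2(\omega_{(m,n)},l)\in H^2(\widehat A,A)$ from Proposition~\ref{Prop: obstruccion O2}. The point is that the diagonal cochains $\omega_{(i,i,i)}$ are ``abelian'' in the sense of Remark~\ref{Remark braided and abelian cocyles}: combined with suitable $l$ they form genuine abelian 3-cocycles (cf.\ Example~\ref{Ex: fermion zesting} applied to each $\mathbb Z/2$ factor), so their $O_2$ is the trivial 2-cocycle, whence $[O_2(\omega_{(m,n)},l)]=[\lambda_{m,n}]$ holds for all $m,n$. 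This establishes $\ker(PW^2)\supseteq\{\lambda_{m,n}\}\cong\mathbb Z/2\times\mathbb Z/2$; the reverse inclusion follows from step (2), since any $\lambda_{\vec m,\vec n}$ with $\vec m\cdot\vec n\ne 0$ already fails \eqref{eq:AlternatingProduct}. Hence $\ker(PW^2)=\mathbb Z/2\times\mathbb Z/2$.

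Finally I would count equivalence classes via the short exact sequence of Proposition~\ref{prop: characterization of image}, $0\to H^3_{\mathrm{ab}}(\widehat A,\mathbb C^*)\to\mathbf{Sez}_{\mathrm{br}}(A,q)\to\ker(PW^2)\to 0$, together with Proposition~\ref{prop:abelian3CocycleOfKleinGroup} giving $H^3_{\mathrm{ab}}(\widehat A,\mathbb C^*)=\mathbb Z/4\oplus\mathbb Z/4\oplus\mathbb Z/2$. To pin down the group structure and exhibit representatives I would write the general braided zesting datum as the product of the $\lambda_{m,n}$-piece above with a representative abelian $3$-cocycle $(\omega_{a,b,c},t_{a,b,c})$ from \eqref{eq: abelian 3-cocycles of klein}; the $\zeta_8$ appearing in \eqref{zesting omega}--\eqref{zesting t} is exactly what merges the order-$2$ datum $m$ with the order-$4$ datum $a$ into a single $\mathbb Z/8$, and likewise $n$ with $b$, while the $(-1)^{cx_1y_2}$ factor remains an independent $\mathbb Z/2$. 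Verifying \eqref{eq:BZ2} and \eqref{eq:BZ3} for the displayed $(\lambda_{m,n},\omega_{(m,n)},t_{(m,n,c)})$ is a direct substitution using the computations of Proposition~\ref{prop:AssocZestingKleinNonTanakian}, and checking that distinct $(m,n,c)\bmod(8,8,2)$ give inequivalent data amounts to tracking the ribbon/coboundary freedom in Definition~\ref{def: equivalent braided zestings}. I expect the main obstacle to be bookkeeping: showing the extension is $\mathbb Z/8\times\mathbb Z/8\times\mathbb Z/2$ rather than $\mathbb Z/4\times\mathbb Z/4\times(\mathbb Z/2)^{\times 3}$ requires carefully identifying how the $\mathbb Z/2$-valued $\lambda_{m,n}$ interacts with the $\mathbb Z/4$-part of $H^3_{\mathrm{ab}}$ under the action, i.e.\ computing a nontrivial extension class, and verifying no hidden coboundary collapses the $c$-direction.
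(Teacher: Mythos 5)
Your overall strategy (run the algorithm of Remark \ref{rmk: procedimiento para calcular braided zestings} starting from Proposition \ref{prop:AssocZestingKleinNonTanakian}, then count via Proposition \ref{prop: characterization of image}) is the same as the paper's, but the step that is supposed to cut $H^2_{\text{sym}}$ down to the four diagonal classes is wrong. You claim the elimination happens at the first obstruction, asserting that $\lambda_{\vec m,\vec n}$ with $\vec m\cdot\vec n\neq 0$ fails \eqref{eq:AlternatingProduct}. This is false: the mixed factors $\omega_{(1,1,2)}$ and $\omega_{(2,2,1)}$ have exponents $x_1y_1z_2$ and $x_2y_2z_1$, and their alternating sums over $\mathbb{S}_3$ vanish identically (the terms cancel in pairs), so $\Psi(\omega_{\vec m,\vec n})=1$ for \emph{every} $(\vec m,\vec n)$ and step (2) eliminates nothing. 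In the paper the selection occurs at the second obstruction: one computes $O_2(\omega_{\vec m,\vec n},l)(\vec y,\vec z)=\big((m_1+m_2)y_1z_1,(n_1+n_2)y_2z_2\big)+\big((\vec m\cdot\vec n)\,y_1z_2,(\vec m\cdot\vec n)\,y_2z_1\big)$ and imposes $[O_2]=[\lambda_{\vec m,\vec n}]$, which forces $m_2=0$ and $n_1=0$. Moreover, even granting your claim, ``$\vec m\cdot\vec n=0$'' is not equivalent to the diagonal form $\lambda_{(m,0),(0,n)}$: for instance $(\vec m,\vec n)=((0,1),(0,0))$ or $((1,1),(1,1))$ satisfy $\vec m\cdot\vec n=0$ yet are non-diagonal, so your argument leaves six classes unexcluded and does not yield $\operatorname{ker}(PW^2)=\mathbb{Z}/2\times\mathbb{Z}/2$.

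There is a second, related error in your treatment of the diagonal classes: you argue that their $O_2$ is the \emph{trivial} 2-cocycle ``whence $[O_2(\omega_{(m,n)},l)]=[\lambda_{m,n}]$.'' Proposition \ref{Prop: obstruccion O2} requires $[O_2]$ to equal the class of $\lambda_{m,n}$, which is nontrivial for $(m,n)\neq(0,0)$; if $O_2$ were trivial the condition would fail precisely in the cases you need. The correct computation (as in the paper) gives $O_2=\lambda_{m,n}$ on the nose for the diagonal data. Also note that a triple with nontrivial $\lambda$ is not an abelian 3-cocycle --- Remark \ref{Remark braided and abelian cocyles} applies only when $\lambda$ is trivial --- so the appeal to Example \ref{Ex: fermion zesting} does not show what you want. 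The final assembly of the parametrization via $H^3_{\text{ab}}(\widehat{A},\mathbb{C}^*)=\mathbb{Z}/4\oplus\mathbb{Z}/4\oplus\mathbb{Z}/2$ and the explicit representatives \eqref{zesting lambda}--\eqref{zesting t} is in the spirit of the paper, but it only becomes available once the kernel computation above is done correctly.
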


\begin{proof}
Our goal is to determine the conditions under which an associative zesting also admits a braided zesting structure. To achieve this, we follow the procedure outlined in Remark \ref{rmk: procedimiento para calcular braided zestings}.

Consider the pair $(\lambda_{\vec{m}, \vec{n}}, \omega_{(\vec{m},\vec{n})})$, defined in Proposition \ref{prop:AssocZestingKleinNonTanakian}. A straightforward computation shows that $\omega_{(\vec{m},\vec{n})}$ satisfies equation \eqref{eq:AlternatingProduct}.

We then define the 2-cochain $l_{(\vec{m},\vec{n},\vec{\xi}, \vec{\zeta})} \in C^2(\mathbb{Z}/2 \times \mathbb{Z}/2, \mathbb{C}^*)$ as follows:
\begin{equation*} \label{eq:lSinLimitar}
l_{(\vec{m},\vec{n},\vec{\xi}, \vec{\zeta})}(\vec{x}, \vec{y}) = \zeta_8^{(m_1+m_2)x_1 y_1+(n_1+n_2)x_2 y_2+(\vec{m}\cdot \vec{n})x_2 y_1+(\vec{m}\cdot \vec{n})x_1 y_2},
\end{equation*}
where $\zeta_8$ is an eighth root of unity.
 This 2-cochain satisfies equation \eqref{eq:BZ2}.

Next, we apply Proposition \ref{Prop: obstruccion O2} to identify which of the previously defined triples $(\lambda_{\vec{m},\vec{n}}, \omega_{(\vec{m},\vec{n},\vec{\xi})}, l_{(\vec{m},\vec{n},\vec{\xi}, \vec{\zeta})})$ satisfy equation \eqref{eq:BZ3}. To do this, we compute the bicharacter $O_2(- \, | \vec{y},\vec{z})$ as shown below:
\begin{align*}
O_2(\vec{x} | \vec{y}, \vec{z}) &= (\xi_{8}^2)^{(m_1+m_2)2x_1y_1z_1+(n_1+n_2)2x_2y_2z_2+(\vec{m}\cdot\vec{n})x_1y_1 z_2 + x_2y_2z_1+(\vec{m}\cdot\vec{n})x_2y_2 z_1 + x_1y_1z_2} \\
&= (-1)^{(m_1+m_2)x_1y_1z_1 + (n_1+n_2) x_2y_2z_2} (\xi_8^2)^{(\vec{m}\cdot\vec{n})x_1y_1 z_2 + x_2y_2z_1+(\vec{m}\cdot\vec{n})x_2y_2 z_1 + x_1y_1z_2}
\end{align*}

The corresponding 2-cocycle $O_2(\omega) \in Z^2 (\Z/2 \times Z/2 , \Z/2\times \Z/2)$ is then given by:
\begin{equation}
O_2 (\omega_{(\vec{m},\vec{n},\vec{\xi})})(\vec{y}, \vec{z}) = \big( (m_1+m_2)y_1z_1, (n_1+n_2)y_2z_2 \big) +  \big((\vec{m}\cdot \vec{n}) y_1z_2,  (\vec{m}\cdot \vec{n}) y_2z_1\big)
\end{equation}

We find that the cohomology classes match if and only if $m_1+m_2=m_1$ (i.e., $m_2=0$) and $n_1+n_2=n_1$ (i.e., $n_1=0$).
As a result, 2-cocycles that admit a braided zesting structure are of the form:
\begin{equation}
\lambda_{m,n}(\vec{x},\vec{y}) = \lambda_{(m,0),(0,n)}(\vec{x},\vec{y}) = (mx_1y_1, nx_2y_2),
\end{equation}
and the corresponding braided zesting data is given by 
equations \eqref{zesting omega} and \eqref{zesting t}.
\end{proof}

\section{Verlinde Modular Data}\label{section:Verlinde}

Among all modular categories, those derived from the representations of quantum groups at roots of unity are of particular interest. Following \cite{EGNO}, we refer to these as \emph{Verlinde modular categories}. These categories have wide-ranging applications, most notably in low-dimensional topology. In this section, we offer a concise overview of how the modular data for these categories are constructed. The central aim of this section, and a primary focus of this paper, is to provide a comprehensive classification of all braided zesting data in Verlinde modular categories. Specifically, we employ Verlinde modular data to classify braided zesting data up to equivalence and to present the associated zested modular data.

\subsection{Modular Categories from Quantum Groups and their group of invertible objects}

For the normalization of inner products in root systems, we adopt the convention of \cite{BK}. This convention distinguishes two types of inner products: $\langle\langle \alpha, \alpha \rangle\rangle = 2$ for short roots, and $\langle \alpha, \alpha \rangle = 2$ for long roots. These two products are related by $\langle\langle \ , \  \rangle\rangle = m\langle \ , \  \rangle$, where $m = 3$ for $G_2$, $m = 2$ for $B_n, C_n,$ and $F_4$, and $m = 1$ in all other cases. For additional details on this normalization, please refer to the appendix.

Next, we introduce some notation for $\mathcal{C} (\mathfrak{g}, k)$, the modular categories associated with $\mathfrak{g}$ and a positive integer $k > 0$, known as the \emph{level}. For details, see \cite{BK}, \cite{sawin2006closed}, and \cite{sawin2002jones}. The categories $\mathcal{C}(\mathfrak{g}, k)$ are constructed as the semisimplification of the \emph{tilting} representations of $U_q(\mathfrak{g})$, where $q = e^{\frac{\pi i}{m(k + h^\vee)}}$. Here, $h^\vee$, the Coxeter dual number of $\mathfrak{g}$, is defined as $h^\vee = \langle \rho, \Theta \rangle + 1$, with $\Theta$ being the highest root of the root system of $\mathfrak{g}$.

For future reference and reader convenience, Table \ref{convention} contains the notation used for the associated root systems, Lie algebras, and other typical notations found in the physics literature for the associated modular categories (or anyon theories, in physical terminology).

\begin{table}[h!]
\centering
\small
\setlength{\tabcolsep}{12pt} 
\renewcommand{\arraystretch}{1.5} 
\begin{tabular}{>{\centering\arraybackslash}m{1.5cm} >{\centering\arraybackslash}m{2.5cm} >{\centering\arraybackslash}m{3cm} >{\centering\arraybackslash}m{2.5cm}}
\toprule
Root System & $\mathfrak{g}$ & $q=e^{\frac{\pi i}{m(h^{\vee}+k)}}$ & $\mathcal{C}(\mathfrak{g},k)$ \\
\midrule
$A_n$ & $\mathfrak{sl}_{n+1}$ & $e^{\frac{\pi i}{n+1 +k}}$ & $SU(n+1)_k$ \\
\midrule
$E_6$ & $\mathfrak{e}_6$ & $e^{\frac{\pi i}{12+k}}$ & $E_6(k)$ \\
\midrule
$B_n$ & $\mathfrak{so}_{2n+1}$ & $e^{\frac{\pi i}{2(2n-1+k)}}$ & $SO(2n+1)_k$ \\
\midrule
$C_n$ & $\mathfrak{sp}_{n}$ & $e^{\frac{\pi i}{2(n+1+k)}}$ & $Sp(2n)_k$ \\
\midrule
$E_7$ & $\mathfrak{e}_7$ & $e^{\frac{\pi i}{18+k}}$ & $E_7(k)$ \\
\midrule
$D_n$ & $\mathfrak{so}_{2n}$ & $e^{\frac{\pi i}{2n-2+k}}$ & $SO(2n)_k$ \\
\midrule
$E_8$ & $\mathfrak{e}_8$ & $e^{\frac{\pi i}{30+k}}$ & $E_8(k)$ \\
\midrule
$G_2$ & $\mathfrak{g}_2$ & $e^{\frac{\pi i}{3(4+k)}}$ & $G_2(k)$ \\
\midrule
$F_4$ & $\mathfrak{f}_4$ & $e^{\frac{\pi i}{2(9+k)}}$ & $F_4(k)$ \\
\bottomrule
\end{tabular}
\caption{Root of unity for \emph{positive integer levels} and alternative names for $\mathcal{C}(\mathfrak{g}, k)$.}
\label{convention}
\end{table}

\subsection{Modular Data}

We will adopt the notation for root systems as outlined in the Appendix. For a given root system, we will use $P$ to denote the weight lattice, $P_+$ for the dominant Weyl alcove, $\alpha_i$ for a fixed set of simple roots, and $\lambda_i$ for the fundamental weights. The set of isomorphism classes of simple objects in $\mathcal{C}(\mathfrak{g}, k)$ has a well-established parametrization, which can be found in \cite{sawin2002jones}.
\begin{equation}\label{eq:simples}
\Irr(\cC (\mathfrak{g}, k)):= \left\lbrace \lambda \in P_{+} :  \langle \lambda, \Theta \rangle  \leq k\right \rbrace,   
\end{equation}
where $\Theta$  is the highest root of the root system of $\mfg$. 

By applying the condition specified in equation \eqref{eq:simples}, and using the formulas for the fundamental weights, highest roots, and the sum of positive roots found in the appendix, we can define the set of labels for the modular data of Verlinde categories as sets of tuples of non-negative integers that satisfy explicit inequalities with respect to a linear combination of the coefficients. These conditions are presented in Table \ref{irreducibles}.

\begin{table}[h!]
\centering
\begin{tabular}{ll}
\toprule
$\mathcal{C}(\mathfrak{g},k)$ & $\text{Irr}(\mathcal{C}(\mathfrak{g},k))$ \\
\midrule
$SU(n+1)_k$ & $\{(a_1, \ldots, a_n) : \sum_{j=1}^n a_j \leq k\}$ \\
$E_6(k)$ & $\{(a_1, \ldots, a_6) : a_1 + 2a_2 + 2a_3 + 3a_4 + 2a_5 + a_6 \leq k\}$ \\
$SO(2n+1)_k$ & $\{(a_1, \ldots, a_n) : a_1 + 2\sum_{j=2}^{n-1} a_j + a_n \leq k\}$ \\
$Sp(2n)_k$ & $\{(a_1, \ldots, a_n) : \sum_{j=1}^n a_j \leq k\}$ \\
$E_7(k)$ & $\{(a_1, \ldots, a_7) : 2a_1 + 2a_2 + 3a_3 + 4a_4 + 3a_5 + 2a_6 + a_7 \leq k\}$ \\
$SO(2n)_k$ & $\{(a_1, \ldots, a_n) : a_1 + 2\sum_{j=1}^{l-2} a_j + a_{n-1} + a_n \leq k\}$ \\
$E_8(k)$ & $\{(a_1, \ldots, a_8) : 2a_1 + 3a_2 + 4a_3 + 6a_4 + 5a_5 + 4a_6 + 3a_7 + 2a_8 \leq k\}$ \\
$G_2(k)$ & $\{(a_1, a_2) : a_1 + 2a_2 \leq k\}$ \\
$F_4(k)$ & $\{(a_1, \ldots, a_4) : 2a_1 + 3a_2 + 2a_3 + a_4 \leq k\}$ \\
\bottomrule
\end{tabular}
\caption{Labels of Verlinde modular data. Conditions on the tuples of non-negative integers}
\label{irreducibles}
\end{table}

We also have explicit formulas for the $S$ and $T$ matrices, which can be found in \cite[Theorem 3.3.20]{BK}:
\begin{enumerate}
    \item The $S$-matrix is given by:
    \begin{equation}
        s_{\lambda,\mu} = \frac{\sum_{w \in W} \operatorname{sgn}(w) q^{2 \left\langle\langle w(\lambda + \rho), \mu + \rho \right\rangle\rangle}}{\sum_{w \in W} \operatorname{sgn}(w) q^{2 \left\langle\langle w(\rho), \rho \right\rangle\rangle}}
    \end{equation}
    Here, $W$ denotes the Weyl group, and $\operatorname{sgn}(w)$ is the sign of $w \in W$.
    
    \item The $T$-matrix is expressed as:
    \begin{equation} \label{tmatrix quantum}
        T_{\lambda,\lambda} = \theta_{\lambda} = q^{\left\langle \langle \lambda, \lambda + 2\rho \right\rangle \rangle}.
    \end{equation}
\end{enumerate}

\begin{example}\label{Example: modular data SU(2)}
Let us consider the simplest case, that is $SU(2)_k$. The simple objects $\Irr(SU(2)_k)$ are parameterized by non-negative integers $0,1,\cdots,k$ and the $S$ and $T$ matrices and fusion coefficients are given by
\begin{align}
    s_{ab} &= \sqrt{\frac{2}{k + 2}} \sin\left(\pi \frac{(a + 1)(b + 1)}{k + 2}\right), \\
    T_{aa} &= \exp \left( \frac{\pi i a^2}{2(k+2)} + \frac{\pi ia}{k+2} \right), \\
    N_{ab}^c &= 
    \begin{cases}
        1 & \text{if } c \equiv a+b \pmod{2} \text{ and } |a-b| \leq c \leq \min\{a+b, 2k-a-b\}, \\
        0 & \text{otherwise}.
    \end{cases}
\end{align}
\end{example}

\subsection{Invertible objects and grading of Verlinde modular data}

For any simple Lie algebra $\mathfrak{g}$, there is a corresponding simply-connected Lie group $G$. The center of this group, $Z(G)$, has a one-to-one correspondence with the invertible objects of the modular category $\mathcal{C}(\mathfrak{g}, k)$. This holds true for all cases except when $\mathfrak{g} = E_8$ and $k = 2$. For further details, refer to \cite{fuchs1991simple} and \cite{sawin2006closed}. Interestingly, for $E_8( 2)$, Sawin proved in \cite{sawin2006closed} that the category corresponds to an Ising category, despite the fact that simply-connected Lie groups of type $E_8$ have a trivial center.

The centers of the corresponding simply-connected Lie groups are described in Table \ref{table:center_of_groups}.

\begin{table}[h!]
\centering
\small
\begin{tabular}{llllll}
\toprule
\textbf{Root System} & $A_n$ & $B_n, C_n, E_7$ & $D_n$ & $E_6$ & $E_8, F_4, G_2$ \\
\midrule
$\operatorname{Z}(G)$ & $\mathbb{Z}/(n+1)$ & $\mathbb{Z}/2$ & $\begin{array}{l} \mathbb{Z}/4 \text{ if } n \text{ is odd} \\ \mathbb{Z}/2 \times \mathbb{Z}/2 \text{ if } n \text{ is even} \end{array}$ & $\mathbb{Z}/3$ & 1 \\
\bottomrule
\end{tabular}
\caption{Center of the corresponding Lie groups for different root systems}
\label{table:center_of_groups}
\end{table}
As a result, the group of invertible objects, denoted by $A$, forms a cyclic group except for $SO(2n)_k$ when $n$ is even. In cyclic cases, the coefficients of the $T$-matrix on $A$ can be determined by evaluating a fixed generator of the group. For $SO(2n)_k$ with $n$ even, these $T$-matrix coefficients are computed using two non-trivial elements of the group.

Since our focus is on the classification of braided zestings, we will exclude modular categories $\mathcal{C}(\mathfrak{g}, k)$ with trivial invertible objects. Importantly, from this point forward, we will \textbf{not} consider cases where $\mathfrak{g}$ is of type $E_8$, $F_4$, or $G_2$.

The following result, which is a direct consequence of \cite[Theorem 3]{sawin2002jones} and \cite[Remark 3]{sawin2002jones}, is crucial for describing the labels of invertible objects and the associated coefficients in the $S$ and $T$ matrices.

\begin{proposition}\label{propo: invertibles}
    For every Lie algebra $\mfg \neq E_8$, the invertible objects are of the form $k\lambda_{i}$ where $\lambda_{i}$ is a fundamental weight in the Weyl alcove such that the associated root $\alpha_i$ is long.
\end{proposition}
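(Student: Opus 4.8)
The plan is to identify the invertible objects of $\mathcal{C}(\mathfrak{g},k)$ directly from the parametrization in \eqref{eq:simples} together with the known correspondence between $\mathrm{Z}(G)$ and the group of invertibles. First I would recall that $X$ is invertible in $\mathcal{C}(\mathfrak{g},k)$ if and only if the categorical dimension $d_X = s_{0,X}$ equals $1$; by the $S$-matrix formula this forces the weight $\lambda$ to be a so-called \emph{cominuscule} (or minuscule-type) dominant weight, i.e., one for which the Weyl orbit sum in the numerator collapses to a single term up to sign. The combinatorial upshot is that these are exactly the weights $\lambda$ lying at the ``corners'' of the Weyl alcove, meaning $\lambda$ together with $\rho$ sits on the boundary in a maximally degenerate way. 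So the first step is: characterize invertibles as the $\lambda \in P_+$ with $\langle \lambda,\Theta\rangle \leq k$ satisfying $d_\lambda=1$, and note these form the subgroup $A\cong \mathrm{Z}(G)$ from Table~\ref{table:center_of_groups}.

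Next I would use the standard fact (from \cite{sawin2002jones}, in fact its Remark~3 and Theorem~3, which the paper invokes just afterward) that the nontrivial elements of $\mathrm{Z}(G)$ correspond to the fundamental weights $\lambda_i$ for which the affine node attaches to $\alpha_i$ with mark $1$ in the extended Dynkin diagram — equivalently, the $\lambda_i$ such that the coefficient of $\alpha_i$ in the highest root $\Theta$ equals $1$. The claim that the invertible object attached to such a node is $k\lambda_i$ then follows because, among all dominant weights in the orbit of $\mathrm{Z}(G)$ acting on the alcove, the representative in the fundamental alcove $\{\lambda : \langle\lambda,\Theta\rangle\le k\}$ of the element corresponding to node $i$ is precisely the scaling $k\lambda_i$: one checks $\langle k\lambda_i,\Theta\rangle = k\cdot(\text{mark of }\alpha_i\text{ in }\Theta) = k$, so $k\lambda_i$ lies on the boundary of the alcove and is a legitimate label, and it is the unique alcove point fixed by the relevant rotation symmetry.

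The final step is to pin down the ``long root'' condition. The marks of $\Theta$ equal $1$ precisely at the nodes $\alpha_i$ that are long roots — this is a case-by-case feature of the extended Dynkin diagrams: for $A_n$ all nodes are long and all have mark $1$; for $B_n$ only $\alpha_1$ (long) has mark $1$; for $C_n$ only $\alpha_n$ (long) has mark $1$; for $D_n$ the three outer nodes (all long) have mark $1$; for $E_6, E_7$ the relevant end nodes are long with mark $1$; and $E_8, F_4, G_2$ are excluded as stated. So I would verify, reading off the tables in the appendix for $\Theta$ and the root lengths, that ``mark of $\alpha_i$ in $\Theta$ is $1$'' $\iff$ ``$\alpha_i$ is long,'' which then yields the statement. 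The main obstacle is the somewhat unavoidable case-by-case check in this last step — specifically keeping the normalization conventions for long/short roots (the $m$ of the preamble and the appendix) consistent so that ``long'' is the correct description in types $B$ and $C$ where short and long roles can be swapped depending on convention. None of the steps is deep; the content is organizing the known data about $\mathrm{Z}(G)$, minuscule weights, and extended Dynkin marks into the single clean statement.
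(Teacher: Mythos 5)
Your proposal is correct and takes essentially the same route as the paper, which gives no independent argument but records the statement as a direct consequence of Sawin's Theorem 3 and Remark 3 (center of $G$ $\leftrightarrow$ mark-one nodes of the extended Dynkin diagram $\leftrightarrow$ the alcove corners $k\lambda_i$), exactly the chain you reconstruct. One small slip: ``the marks equal $1$ precisely at the long nodes'' is false as a biconditional (e.g.\ $\alpha_2$ in $B_n$ is long with mark $2$), but only the implication mark $1\Rightarrow$ long is needed, and the alcove condition $\langle k\lambda_i,\Theta\rangle\leq k$ already excludes long nodes of higher mark, so the stated conclusion is unaffected.
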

\qed

For each type of modular data where $A$ is a cyclic group, Table \ref{Table: generators and twist Cyclic case} lists the multiples of the fundamental weights that correspond to the invertibles and a generator, along with its twist coefficients on that generator $A$.

\begin{table}[h!]
\centering
\begin{tabular}{lll}
\toprule
$\mathcal{C}(\mathfrak{g}, k)$ & $A = \langle g \rangle$ & $\theta_g$ \\
\midrule
$SU(n+1)_k$ & $\{0, k\lambda_1, \ldots, k\lambda_n\} = \langle k \lambda_1\rangle$ & $e^{\frac{\pi i kn}{n+1}}$ \\
$E_6(k)$ & $\{ 0, k\lambda_1, k \lambda_6\} = \langle k\lambda_1 \rangle$ & $(-1)^k e^{\frac{k\pi i}{3}}$ \\
$SO(2n+1)_k$ & $\{ 0, k\lambda_1\} = \langle k\lambda_1 \rangle$ & $(-1)^k$ \\
$Sp(2n)_k$ & $\{ 0, k\lambda_n\} = \langle k\lambda_n \rangle$ & $i^{kn}$ \\
$E_7(k)$ & $\{ 0, k\lambda_7\} = \langle k\lambda_7 \rangle$ & $(-1)^k i^k$ \\
$SO(2n)_k$ with $n=2s+1$ odd & $\{0, k\lambda_1, k \lambda_{n-1}, k\lambda_n \} = \langle k \lambda_n \rangle$ & $i^{ks} e^{\pi k i/4}$ \\
\bottomrule
\end{tabular}
\caption{Generators of invertible objects and their twist coefficients}
\label{Table: generators and twist Cyclic case}
\end{table}

When $A$ is a Klein group, the twist coefficients for fixed generators are provided in Table \ref{twistklein}.

\begin{table}[h!]
    \centering
    \begin{tabular}{llll}
    \toprule
    $\mathcal{C}(\mathfrak{g}, k)$ & $A = \langle g_1, g_2 \rangle$ & $\theta_{g_1}$ & $\theta_{g_2}$ \\
    \midrule
    $SO(2n)_k$ ($n = 2s$ even) & $\{0, k\lambda_1, k\lambda_{n-1}, k\lambda_n\} = \langle k\lambda_{n-1}, k\lambda_n \rangle$ & $i^{ks}$ & $i^{ks}$ \\
    \bottomrule
    \end{tabular}
    \caption{Twist coefficients for Klein groups}
    \label{twistklein}
\end{table}

In the following result, we provide formulas for the universal grading of a Verlinde modular datum $\mathcal{C}(\mathfrak{g}, k)$ that has a non-trivial group of invertible objects, excluding the case $\mathfrak{g} = E_8$.

Firstly, for a cyclic group $A$ of order $N$, we establish an identification $\widehat{A} = A$ by fixing both a generator and a corresponding primitive root of unity $e^{\frac{2\pi i}{N}}$. This allows us to define an explicit $\mathbb{Z}/N$ grading. Similarly, in the case of $SO(2n)_k$ with $n$ even, we fix two generators to establish an identification $A=\widehat{A}$  to provide an explicit $\mathbb{Z}/2 \times \mathbb{Z}/2$ grading.

\begin{theorem} \label{grading fixed generators}
Let $\cC(\mfg,k)$ be a Verlinde modular category with non-trivial invertible objects. Then the universal gradings are presented in Table \ref{universalgrading} and Table \ref{universalgradingklein}.
\end{theorem}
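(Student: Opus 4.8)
The plan is to compute the universal grading directly from Proposition~\ref{grading modular data}, which identifies $U(\cR_M)\cong\widehat{G(\cR_M)}$ via the character $b_i\mapsto [b_j\mapsto \frac{s_{ij}}{d_id_j}]$. For each Lie type appearing in Tables~\ref{Table: generators and twist Cyclic case} and~\ref{twistklein}, we already know the group $A=G(\cC(\mfg,k))$ of invertibles (from Proposition~\ref{propo: invertibles} and the center computations), together with explicit representatives $k\lambda_i$ for the invertible simples. So the whole problem reduces to evaluating the bicharacter
\[
A\times \Irr(\cC(\mfg,k))\to \C^*,\qquad (k\lambda_i,\mu)\mapsto \frac{s_{k\lambda_i,\mu}}{d_{k\lambda_i}d_\mu},
\]
and expressing it, after the chosen identification $\widehat A\cong A$, as an explicit pairing on the weight coordinates of $\mu$. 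Since each invertible object $k\lambda_i$ has $s_{k\lambda_i,\mu}/d_\mu$ equal to a root of unity (the action of $k\lambda_i$ under the canonical $\Z$-grading), this is a standard computation: by \cite[Theorem 3, Remark 3]{sawin2002jones} the value is $e^{2\pi i\langle\langle k\lambda_i,\mu\rangle\rangle/\text{(something)}}$ up to the normalization conventions in the appendix, i.e.\ it is governed by the pairing of $k\lambda_i$ with $\mu$ in $P/Q$.

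The key steps, in order: (1) For each type, recall from \cite{sawin2002jones} (or compute from the $S$-matrix formula $s_{\lambda,\mu}=\frac{\sum_w \mathrm{sgn}(w)q^{2\langle\langle w(\lambda+\rho),\mu+\rho\rangle\rangle}}{\sum_w \mathrm{sgn}(w)q^{2\langle\langle w(\rho),\rho\rangle\rangle}}$) that for an invertible $\lambda=k\lambda_i$ one has $s_{k\lambda_i,\mu}=d_{k\lambda_i}\,\zeta_i(\mu)\,d_\mu$ where $\zeta_i(\mu)$ is a root of unity depending only on the class of $\mu$ in $P/Q$; this uses that $k\lambda_i+\rho=w_i(\rho)+(\text{weight lattice shift})$ for a Weyl group element $w_i$, a consequence of $k\lambda_i$ being invertible. (2) Read off $\zeta_i(\mu)$ as an explicit exponential in the coordinates $(a_1,\dots,a_n)$ of $\mu$ using the explicit fundamental weights, highest root, and $\rho$ listed in the appendix — this is exactly the data tabulated in Table~\ref{irreducibles}. (3) Under the fixed identification $\widehat A=A$ (generator $\leftrightarrow$ primitive root $e^{2\pi i/N}$), translate the character $\mu\mapsto(\zeta_i(\mu))_i$ into the grading map $|-|\colon\Irr\to A$ and record it in Tables~\ref{universalgrading} and~\ref{universalgradingklein}. (4) Verify faithfulness/surjectivity — but this is automatic from Proposition~\ref{grading modular data} since $\mathcal{C}(\mfg,k)$ is modular, hence the technical hypothesis there holds and $U(\cR_M)\cong\widehat{G(\cR_M)}$.

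The main obstacle will be step (2): carefully matching the abstract root-of-unity values $\zeta_i(\mu)$ against the two inner-product normalizations ($\langle\ ,\ \rangle$ vs.\ $\langle\langle\ ,\ \rangle\rangle$, with the factor $m$) and the specific coordinate conventions for each classical series and for $E_6,E_7$, so that the grading formulas come out clean (typically something like $|\mu|=\sum_j c_j a_j \bmod N$ for explicit integer weights $c_j$ determined by $\langle k\lambda_i,\lambda_j\rangle$). The $D_n$ cases require extra care: for $n$ odd one must check the order-$4$ structure is generated as claimed by $k\lambda_n$, and for $n$ even one must disentangle the two $\Z/2$ factors and confirm the grading is exactly the Klein four group with the stated pairing; here the subtlety is that $k\lambda_{n-1}$ and $k\lambda_n$ have coinciding twists (Table~\ref{twistklein}) so one must use the full bicharacter, not just the quadratic form, to separate them. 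Everything else is bookkeeping with the appendix data, and the proof amounts to assembling these case-by-case computations into the two tables.
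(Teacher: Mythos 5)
Your proposal follows essentially the same route as the paper: reduce to Proposition~\ref{grading modular data}, identify $A$ with $\widehat A$ via a primitive root of unity, compute the character value of each simple on the invertibles $k\lambda_i$ from Sawin's results, and finish by case-by-case bookkeeping with the appendix data. The only difference is that the paper short-circuits your ``normalization obstacle'' by quoting Sawin's Lemma~4 directly, which gives the clean closed form $\lVert X\rVert(k\lambda_i)=e^{2\pi i\langle \lambda_i,X\rangle}$ (pairing with the fundamental weight $\lambda_i$ itself, in the long-root normalization), from which the tables follow.
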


\begin{table}[h!]
    \centering
    \small
    \begin{tabular}{llll}
    \toprule
    $\mathcal{C}(\mathfrak{g}, k)$ & $g$ & $X$ & $|X| \mod N$ \\
    \midrule
    $SU(n+1)_k$ & $k\lambda_1$ & $(a_1, \ldots, a_n)$ & $-\sum_{j=1}^n ja_j \mod (n+1)$ \\
    $E_6(k)$ & $k\lambda_1$ & $(a_1, \ldots, a_6)$ & $a_1 + 2a_3 + a_5 + 2a_6 \mod 3$ \\
    $SO(2n+1)_k$ & $k\lambda_1$ & $(a_1, \ldots, a_n)$ & $a_n \mod 2$ \\
    $Sp(2n)_k$ & $k\lambda_n$ & $(a_1, \ldots, a_n)$ & $\sum_{j=1}^n ja_j \mod 2$ \\
    $E_7(k)$ & $k\lambda_7$ & $(a_1, \ldots, a_7)$ & $a_2 + a_5 + a_7 \mod 2$ \\
    $SO(2n)_k$ (with $n$ odd) & $k\lambda_n$ & $(a_1, \ldots, a_n)$ & $\sum_{j=1}^{n-2} 2a_j + (n-2) a_{n-1} + n a_n \mod 4$ \\
    \bottomrule
    \end{tabular}
    \caption{Universal grading for categories with cyclic invertible group }
    \label{universalgrading}
\end{table}

\begin{table}[h!]
    \centering
    \tiny
    \begin{tabular}{llll}
    \toprule
    $g_1$ & $g_2$ & $X$ & $|X| \in \mathbb{Z}/2 \times \mathbb{Z}/2$ \\
    \midrule
    $k\lambda_{n-1}$ & $k\lambda_n$ & $(a_1, \ldots, a_n)$ & $\left( \sum_{j=1}^{n-2} ja_j + sa_{n-1} + (s-1)a_n \mod 2, \sum_{j=1}^{n-2} ja_j + (s-1)a_{n-1} + sa_n \mod 2 \right)$ \\
    \bottomrule
    \end{tabular}
    \caption{Universal grading for $SO(2n)$ and $n$ even.}
    \label{universalgradingklein}
\end{table}

\begin{proof}
Let $A$ denote the group of invertible objects for a given modular datum $M$. According to Proposition \ref{grading modular data}, $M$ is universally graded over $\widehat{A}$. In the context of Verlinde categories, $A$ is either a cyclic group or $\mathbb{Z}/2 \times \mathbb{Z}/2$. In either case, we can identify $A$ with $\widehat{A}$ using the primitive $N$-th root of unity $e^{2\pi i/N}$ where $N$ is the exponent of $A$.

For Verlinde modular categories $\mathcal{C}(\mathfrak{g}, k)$, Lemma 4 from \cite{sawin2002jones} states that if $k\lambda_i$ corresponds to an invertible object—then $\lambda_i$ is a fundamental weight with an associated long simple root $\alpha_i$—then for any other dominant weight $X$ in $\Irr(\mathcal{C}(\mathfrak{g}, k))$, the following equation holds:
\[
\lVert X \rVert(k\lambda_i) = e^{2\pi i \langle \lambda_i, X \rangle}.
\]

By applying the above formula and conducting a case-by-case analysis, we arrive at the results summarized in Table \ref{universalgrading}.
\end{proof}

\subsection{Conditions for Factorizing the Pointed Part of a Verlinde Category}

If $\mathcal{B}$ is a braided fusion category and $\mathcal{D} \subseteq \mathcal{B}$ is a subcategory, the centralizer subcategory is defined as the full subcategory $C_{\mathcal{B}}(\mathcal{D}) = \{ X \in \mathcal{B} : c_{Y,X} \circ c_{X,Y} = \text{id}_{X \otimes Y}, \text{ for all } Y \in \mathcal{D} \}$. Michael Muger proved in \cite{MR1990929} that if $\mathcal{B}$ is a modular category and $\mathcal{D} \subseteq \mathcal{B}$ is a fusion subcategory, then $\mathcal{B} =  C_{\mathcal{B}}(\mathcal{D}) \boxtimes \mathcal{D}$ as braided categories if and only if $\mathcal{D}$ is also modular.

In the case of zestings of modular categories, the centralizer of $\mathcal{B}_{\text{pt}}$, the maximal pointed subcategory, plays a significant role. Specifically, if $\mathcal{B}$ is a modular category, it is universally graded over $\widehat{A}$, where $A$ is the group of isomorphism classes of $\mathcal{B}_{\text{pt}}$. The invertible objects in the trivial component of the grading correspond precisely to the simples in $C_{\mathcal{B}}(\mathcal{B}_{\text{pt}})$. In other words, $\mathcal{B}$ can be factorized as $C_{\mathcal{B}}(\mathcal{B}_{\text{pt}}) \boxtimes \mathcal{B}_{\text{pt}}$ if and only if $\mathcal{B}_{\text{pt}}$ is modular, or equivalently, if the only invertible object in the trivial component is the unit object.

In this section, we will study necessary and sufficient conditions for Verlinde categories to be factorized in the form $C_{\mathcal{B}}(\mathcal{B}_{\text{pt}}) \boxtimes \mathcal{B}_{\text{pt}}$. This is important for the classification of zestings. If the category can be factorized, any braided or associative zesting datum corresponds simply to an element in $H^3_{\text{ab}}(\widehat{A}, \mathbb{C}^*)$ or $H^3(\widehat{A}, \mathbb{C}^*)$, respectively.  Our aim in this section is to subsequently discard those Verlinde categories that can be factorized for the classification of braided zestings.

\begin{theorem}\label{Trivial graded invertible}
Let $\mathcal{C}(\mathfrak{g}, k)$ be a Verlinde category. Table \ref{Table:Admissible levels} provides  necessary and sufficient conditions on the level $k$ for the pointed subcategory $\mathcal{C}(\mathfrak{g}, k)_{\text{pt}}$ to be degenerate. If a level $k$ does not satisfy the conditions in Table \ref{Table:Admissible levels}, then $\mathcal{C}(\mathfrak{g}, k) =  C_{\mathcal{C}(\mathfrak{g}, k)}(\mathcal{C}(\mathfrak{g}, k)_{\text{pt}}) \boxtimes \mathcal{C}(\mathfrak{g}, k)_{\text{pt}} $ as braided categories.

Additionally, Table \ref{invertiblegrade0} provides the generators of the invertible objects in the trivial component of the universal grading. Equivalently, Table \ref{invertiblegrade0} lists generators of the simple objects in $C_{\mathcal{C}(\mathfrak{g}, k)}(\mathcal{C}(\mathfrak{g}, k)_{\text{pt}}) \cap \mathcal{C}(\mathfrak{g}, k)_{\text{pt}}$.
\end{theorem}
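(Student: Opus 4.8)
The plan is to reduce everything to the criterion established just before the theorem: $\mathcal{C}(\mathfrak{g},k)_{\text{pt}}$ is degenerate (equivalently, the factorization $\mathcal{C}(\mathfrak{g},k)=C_{\mathcal{C}(\mathfrak{g},k)}(\mathcal{C}(\mathfrak{g},k)_{\text{pt}})\boxtimes\mathcal{C}(\mathfrak{g},k)_{\text{pt}}$ fails) if and only if there is a non-trivial invertible object in the trivial component of the universal $\widehat{A}$-grading, i.e.\ $\mathcal{Z}_2(\mathcal{C}(\mathfrak{g},k)_{\text{pt}})\neq\Vec$. Since $\mathcal{C}(\mathfrak{g},k)_{\text{pt}}$ is pointed with group $A$ and quadratic form $q=\theta|_A$, its Müger center is the subgroup $A_0=\{x\in A: b(x,y)=1\ \forall y\in A\}$ where $b(x,y)=\theta_x\theta_y/\theta_{xy}$ is the associated bicharacter. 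So the theorem amounts to: for each type, compute $A_0$ as a function of $k$, and record exactly when $A_0\neq 1$, together with its generators. This is a finite type-by-type check using the twist data already tabulated in Table \ref{Table: generators and twist Cyclic case} and Table \ref{twistklein}.

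First I would treat the cyclic cases. Here $A=\langle g\rangle\cong\bZ/N$ with $\theta_g$ a known root of unity from Table \ref{Table: generators and twist Cyclic case}; write $\theta_g=e^{2\pi i r/M}$ in lowest terms for the appropriate $M$. The bicharacter on $\bZ/N$ is $b(g^a,g^b)=e^{2\pi i\,ab\cdot(2r/M')}$-type expression obtained from $q(g^a)=\theta_g^{a^2}$ (care is needed because $q(g^a)$ is $\theta_g$ to the $a^2$ only up to the fusion-rule correction $\lambda(g,g)$-type terms, but for the \emph{pointed} subcategory one just has $q(g^a)=\theta_{g^a}$ and $b(g^a,g^b)=\theta_{g^{a+b}}^{-1}\theta_{g^a}\theta_{g^b}$, which is a genuine bicharacter). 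The radical $A_0$ is then the subgroup of $\bZ/N$ on which this bicharacter vanishes; its order is a gcd computation in terms of $N$ and the numerator of the exponent, which in turn depends on $k$ through $\theta_g$. Carrying this out for $SU(n+1)_k$, $E_6(k)$, $SO(2n+1)_k$, $Sp(2n)_k$, $E_7(k)$, and $SO(2n)_k$ with $n$ odd yields the congruence conditions on $k$ listed in Table \ref{Table:Admissible levels}; the non-trivial element(s) of $A_0$, expressed as multiples of fundamental weights, give the entries of Table \ref{invertiblegrade0}. For the non-degenerate (generic) levels $A_0=1$, and Müger's theorem quoted above gives the asserted factorization, which proves the final sentence of the statement.

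Next I would handle the single non-cyclic case, $SO(2n)_k$ with $n$ even, where $A=\bZ/2\times\bZ/2=\langle g_1,g_2\rangle$ and the twists are $\theta_{g_1}=\theta_{g_2}=i^{ks}$ from Table \ref{twistklein} (with $n=2s$); one also needs $\theta_{g_1g_2}$, which is $\theta_{k\lambda_1}=(-1)^{ks}$ or similar, computable from \eqref{tmatrix quantum}. From these three twist values one writes down the $2\times2$ Gram matrix of the bicharacter $b$ on $\bZ/2\times\bZ/2$ with values in $\{\pm1\}$ and reads off its radical $A_0$ as a function of $k\bmod 4$ (or $k\bmod 2$). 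The four possibilities — $A_0$ trivial, $A_0=\bZ/2$ generated by $g_1$, by $g_2$, or by $g_1g_2$, and $A_0=\bZ/2\times\bZ/2$ — each correspond to an explicit congruence on $k$, and these populate the relevant row(s) of Tables \ref{Table:Admissible levels} and \ref{invertiblegrade0}. Again, whenever $A_0=1$ the factorization statement follows from Müger's theorem.

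The main obstacle I anticipate is purely bookkeeping rather than conceptual: one must be careful with the exact order $M$ of each root of unity $\theta_g$ (it genuinely varies with $n$ through parities like $n$ even/odd, $s$ even/odd), with the distinction between the quadratic form $q$ on $A$ and the bicharacter $b$ it induces, and with translating a vanishing-of-bicharacter condition into the clean divisibility statement that appears in Table \ref{Table:Admissible levels}. A secondary subtlety is identifying the generator of $A_0$ \emph{as a specific multiple $k\lambda_i$ of a fundamental weight} — this uses Proposition \ref{propo: invertibles} together with the explicit grading formulas of Theorem \ref{grading fixed generators}, since an element lies in the trivial component precisely when its grading (a character of $A$) is trivial, and matching that up with the labels $k\lambda_1, k\lambda_{n-1}, k\lambda_n$ requires the case-by-case evaluation $\lVert X\rVert(k\lambda_i)=e^{2\pi i\langle\lambda_i,X\rangle}$ used in the proof of Theorem \ref{grading fixed generators}. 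None of this is deep, but it is where errors would creep in, so I would organize the computation into one lemma per root-system type and verify each against the low-rank examples ($SU(2)_k$, etc.) already worked out in the paper.
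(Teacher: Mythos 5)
Your proposal is correct and is essentially the paper's own argument: the paper's proof is exactly the case-by-case ``direct computation'' you describe, carried out with the grading formulas of Theorem \ref{grading fixed generators} applied to the invertible labels $k\lambda_i$, combined with M\"uger's factorization criterion quoted immediately before the theorem. Your bicharacter-radical route is equivalent to that grading-formula route, since the degree of an invertible object $a$ is precisely the character $b(a,-)$ on $A$, so both computations of $A_0$ (and hence of the degeneracy conditions and the generators in Table \ref{invertiblegrade0}) coincide.
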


\begin{table}[h!]
\centering
\begin{tabular}{ll}
\toprule
$\cC(\mfg,k)$ & Condition \\
\midrule
$SU(n+1)_k$ & $(n+1, k) \neq 1$ \\
$E_6(k)$ & $k \in 3\mathbb{Z}$ \\
$SO(2n+1)_k$ & for all $k$ \\
$Sp(2n)_k$ & $k \in 2\mathbb{Z}$ \\
$E_7(k)$ & $k \in 2\mathbb{Z}$ \\
$SO(2n)_k, n = 2s + 1$ & $k \in 2\mathbb{Z}$ \\
$SO(2n)_k, n = 2s$ & $k \in 2\mathbb{Z}$ \\
\bottomrule
\end{tabular}
\caption{Conditions for $\mathcal{C}(\mathfrak{g}, k)_{\text{pt}}$ to be degenerate.}
\label{Table:Admissible levels}
\end{table}

\begin{table}[h!]
\centering
\footnotesize
\begin{tabular}{llll}
\toprule
$\mathcal{C}(\mathfrak{g}, k)$ & $g$ & $A_0$ & $h = g^a$ \\
\midrule
$SU(n+1)_k$ & $k\lambda_1$ & $\{ k\lambda_j : kj \in (n+1)\mathbb{Z} \}$ & $g^{\frac{n+1}{\gcd(n+1, k)}}$ \\
\midrule
$E_6(k)$ & $k\lambda_1$ & $\langle k\lambda_1 \rangle$ & $g$ \\
\midrule
$SO(2n+1)_k$ & $k\lambda_1$ & $\langle k\lambda_1 \rangle$ & $g$ \\
\midrule
$Sp(2n)_k$ & $k\lambda_n$ & $\langle k\lambda_n \rangle$ & $g$ \\
\midrule
$E_7(k)$ & $k\lambda_7$ & $\langle k\lambda_7 \rangle$ if $k \in 2\mathbb{Z}$ & $g$ \\
\midrule
\multirow{2}{*}{$SO(2n)_k, n=2s+1$ odd} & \multirow{2}{*}{$k\lambda_n$} & 
\begin{tabular}{@{}l@{}}
$\{0, k\lambda_1\}$ if $k \notin 4\mathbb{Z}$ \\
$\{0, k\lambda_1, k\lambda_{n-1}, k\lambda_n \}$ if $k \in 4\mathbb{Z}$ 
\end{tabular} & 
\begin{tabular}{@{}l@{}}
$g^2 = k\lambda_1$ if $k \notin 2\mathbb{Z}$ \\
$g$ if $k \in 4\mathbb{Z}$
\end{tabular} \\
\midrule
$SO(2n)_k, n$ even & $k\lambda_{n-1}, k\lambda_n$ & $A = A_0 = \langle k\lambda_{n-1}, k\lambda_n \rangle$ & \\
\bottomrule
\end{tabular}
\caption{The element $g$ is a generator of $A$, and $h$ a generator of $A_0$.}
\label{invertiblegrade0}
\end{table}

\begin{proof}
    The result follows from direct computation, utilizing the results from Proposition \ref{grading fixed generators}.
\end{proof}

\subsection{Zestings of Verlinde Categories and Their Zested Modular Data}\label{ExamplesVerlinde}

In this final subsection, we present the main result of the paper: the classification of braided zestings for Verlinde categories $\mathcal{C}(\mathfrak{g}, k)$.

According to Theorem \ref{Trivial graded invertible}, if a category $\mathcal{C}(\mathfrak{g}, k)$ does not meet the conditions outlined in Table \ref{Table:Admissible levels}, its associative and braided zestings are classified by the groups $H^3(\widehat{A}, \mathbb{C}^*)$ and $H^3_{\text{ab}}(\widehat{A}, \mathbb{C}^*)$, respectively. Here, $A$ is the group of invertible objects in $\mathcal{C}(\mathfrak{g}, k)$. These groups $A$ are either cyclic or $\mathbb{Z}/2 \times \mathbb{Z}/2$, as indicated in Table \ref{table:center_of_groups}.

For a cyclic group, the abelian cohomology $H^3_{\text{ab}}(\mathbb{Z}/N, \mathbb{C}^*)$ is isomorphic to the set $\{ \nu \in \mathbb{C}^* : \nu^{N^2} = \nu^{2N} = 1 \}$, as was described in equation \eqref{eq: abelian 3 cohomology of cyclic groups}.

For the group $\mathbb{Z}/2 \times \mathbb{Z}/2$, we saw in Proposition \ref{prop:abelian3CocycleOfKleinGroup} that $H^3_{\text{ab}}(\mathbb{Z}/2 \times \mathbb{Z}/2, \mathbb{C}^*) = \mathbb{Z}/4 \times \mathbb{Z}/4 \times \mathbb{Z}/2$, and a set of 3-cocycles representing the cohomology classes was presented in equation \eqref{eq: abelian 3-cocycles of klein}.

In rest of this section, we will focus on describing the braided zestings of $\mathcal{C}(\mathfrak{g}, k)$ that satisfy the conditions in Table \ref{Table:Admissible levels}. The zestings will be  classified differently depending on whether $A$ is cyclic or not.

\subsubsection{Cyclic Zesting}

Consider a pre-metric group $(A, q)$ where $A$ is a cyclic group of order $N$ and $A_0$ has order $m$. Let $g$ be a generator of $A$, and choose a primitive $N$-th root of unity $q$ and $\zeta \in \mathbb{C}^*$ such that $\zeta^2 = q$.

Firstly, define $\epsilon(a)$ as:
\begin{align*}
    \epsilon(a) = \begin{cases} 
    1 & \text{if } q(g^{N/m}) = -1, \\
    0 & \text{if } q(g^{N/m}) = 1 
    \end{cases}
\end{align*}

Associative zesting classes are characterized by pairs $(a, b) \in \mathbb{Z}/N \times \mathbb{Z}/m$. For such a pair $(a, b)$, the zesting parameters $(\lambda_a, \lambda_b)$ are:

\begin{equation}
\begin{aligned}
   \lambda_a(i, j) &= \begin{cases} 
   1 & \text{if } i + j < N, \\
   g^a & \text{if } i + j \ge N 
   \end{cases}, \quad
   \lambda_b(i, j, k) &= \begin{cases} 
   1 & \text{if } i + j < N, \\
   \zeta^{k (\epsilon(a) + 2b)} & \text{if } i + j \ge N 
   \end{cases}
\end{aligned}
\end{equation}

A pair $(a, b)$ admits a braided zesting structure if and only if 
\begin{equation}\label{eq: cyclic braided zesting condition}
 a \equiv \epsilon(a) + 2b \mod N.    
\end{equation}
In this case, the equivalence classes of braided zesting data are parameterized by triples $(a, b, \gamma)$ where $\gamma \in \mathbb{C}^*$ satisfies $\gamma^N = \zeta^{- (\epsilon(a) + 2b)}$. For a specific triple $(a, b, \gamma)$, the additional datum $t_\gamma$ is:
\begin{align}
   t_\gamma(i, j) &= \gamma^{-ij}.
\end{align}

In the case where the modular data originates from a modular category that is universally graded by a cyclic group, it follows from Theorem \ref{th:new_modular_data} that the zested modular data is given by
\begin{align}
  T^\lambda_{X_i, X_i} &= \gamma^{i^2} T_{X_i, X_i}, \quad 0 \leq i < N, \quad X_i \in B_i \label{eq:cyclic T zested}, \\
  s^\lambda_{X_i, Y_j} &= \gamma^{-2ij} s_{X_i, Y_j}, \quad 0 \leq i, j < N, \quad X_i \in \cB_i, \quad Y_j \in \cB_j \label{eq:cyclic S zested}.
\end{align}

For more details, refer to \cite[Propositions 6.3 and 6.4]{zesting}.

\begin{theorem}\label{th: zesting velinde category cyclic}
Let $\mathcal{C}(\mathfrak{g}, k)$ be a Verlinde category as listed in Table \ref{Table:Admissible levels}, excluding the case of $SO(2n)$ for even $n$. The parameters $(a,b)\in \mathbb{Z}/N \times \mathbb{Z}/m$ and $\epsilon_a$, in the cyclic  associative zestings  are detailed in Table \ref{braidedzest}. The last column of Table \ref{braidedzest} contains the pairs $(a,b)$ that admit a braided zesting structure.
\end{theorem}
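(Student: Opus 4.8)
\emph{Proof sketch.} The plan is to prove the statement by a uniform, family-by-family application of the cyclic zesting formalism recalled above (cf. \cite[Propositions 6.3 and 6.4]{zesting}) to each root system listed in Table~\ref{Table:Admissible levels}, other than $SO(2n)_k$ with $n$ even. For every such $\cC(\mfg,k)$ the group $A=\langle g\rangle$ of invertible objects is cyclic; its order $N$, a chosen generator $g$, and the twist $\theta_g$ are recorded in Table~\ref{table:center_of_groups} and Table~\ref{Table: generators and twist Cyclic case}, while Theorem~\ref{Trivial graded invertible} and Table~\ref{invertiblegrade0} identify the subgroup $A_0=C_{\cC(\mfg,k)}(\cC(\mfg,k)_{\mathrm{pt}})\cap \cC(\mfg,k)_{\mathrm{pt}}$, a generator $h=g^{N/m}$ of it, and hence $m=|A_0|$. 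With $N$, $m$ and $g$ in hand, the associative zesting classes are automatically parametrized by $(a,b)\in \Z/N\times \Z/m$ through the displayed formulas for $(\lambda_a,\lambda_b)$, so the only datum that still has to be pinned down case by case is the sign $q(h)=\theta_{g^{N/m}}\in\{\pm1\}$, which fixes $\epsilon_a$.

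To compute $\theta_{g^{N/m}}$ I would use that the ribbon twist restricts to a quadratic form on $\cC(\mfg,k)_{\mathrm{pt}}\cong(A,\theta)$, so that $\theta_{g^{j}}=\theta_g^{\,j^{2}}$ for all $j$ (a valid identity on $\Z/N$ precisely because $\theta_g^{N^2}=\theta_g^{2N}=1$, which one checks directly from Table~\ref{Table: generators and twist Cyclic case} or from \eqref{tmatrix quantum}). Evaluating at $j=N/m$ and simplifying the resulting root of unity gives $\epsilon_a$ explicitly in each family: e.g. for $SO(2n+1)_k$ one has $N=m=2$, $h=g$, $\theta_g=(-1)^k$, so $\epsilon_a\equiv k\pmod 2$; for $Sp(2n)_k$ ($k$ even) $N=m=2$ and $\theta_g=i^{kn}$, so $\epsilon_a\equiv \tfrac{kn}{2}\pmod 2$; for $E_6(k)$ ($k\in 3\Z$) and $E_7(k)$ ($k\in 2\Z$) similar one-line computations give $\epsilon_a=0$ and $\epsilon_a\equiv\tfrac{k}{2}\pmod 2$, respectively. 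Once $\epsilon_a$ is known, condition \eqref{eq: cyclic braided zesting condition} says that a pair $(a,b)$ admits a braided zesting precisely when $a\equiv \epsilon_a+2b\pmod N$; solving this congruence in $\Z/N\times\Z/m$ yields the last column of Table~\ref{braidedzest} --- when $N$ is odd it is solvable for every $a$, while for $N$ even it imposes the parity constraint $a\equiv\epsilon_a\pmod 2$. The accompanying braided datum is $t_\gamma(i,j)=\gamma^{-ij}$ with $\gamma^N=\zeta^{-(\epsilon_a+2b)}$, and the zested modular data is then given verbatim by \eqref{eq:cyclic T zested}--\eqref{eq:cyclic S zested}.

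The genuinely case-sensitive part --- which I expect to be the main obstacle --- is the $SU(n+1)_k$ family, where both $N=n+1$ and $m=\gcd(n+1,k)$ vary: here $\theta_{g^{N/m}}=\theta_g^{\,(n+1)^2/m^2}=e^{\pi i\,kn(n+1)/m^2}$, so deciding whether this equals $-1$ requires a careful bookkeeping of the parities of $n$, $k$ and $\gcd(n+1,k)$ before one can even state $\epsilon_a$, and one must then describe the solutions of $a\equiv \epsilon_a+2b\pmod{n+1}$ in those terms. The $SO(2n)_k$ with $n=2s+1$ odd is similar but requires first splitting into $k\equiv 2\pmod 4$ and $k\equiv 0\pmod 4$, since $A_0$ jumps from $\{0,k\lambda_1\}$ to all of $A=\Z/4$ (Table~\ref{invertiblegrade0}), changing $m$ and hence the modulus of the congruence. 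I would therefore organize the write-up as one short computation per root system --- each of the shape ``evaluate $\theta_g^{(N/m)^2}$, read off $\epsilon_a$, solve $a\equiv\epsilon_a+2b$'' --- and assemble the outcomes into Table~\ref{braidedzest}; everything else is bookkeeping already contained in Tables~\ref{Table: generators and twist Cyclic case} and \ref{invertiblegrade0} and in Theorem~\ref{grading fixed generators}.
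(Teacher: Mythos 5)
Your proposal follows essentially the same route as the paper's own proof: read off $N$, $m$, the generator $g$ and $h=g^{N/m}$ from Tables \ref{Table: generators and twist Cyclic case} and \ref{invertiblegrade0}, compute $\theta_h=\theta_g^{(N/m)^2}$ (the paper does this via \eqref{tmatrix quantum}, you via the quadratic-form identity — the same computation), determine $\epsilon_a$, and then solve the congruence \eqref{eq: cyclic braided zesting condition} family by family, with $SU(n+1)_k$ and $SO(2n)_k$ ($n$ odd, split by $k\bmod 4$) as the only cases needing real bookkeeping, exactly as in the paper. The only caution is to state the congruence in the normalization actually used in Table \ref{braidedzest} (with $a\tfrac{N}{m}$ on the left when $A_0\subsetneq A$, as in the $SU(n+1)_k$ row), but this is a bookkeeping point the paper handles the same way.
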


\begin{table}[h!]
\centering
\footnotesize
\begin{tabular}{lllll}
\toprule
$\mathcal{C}(\mathfrak{g},k)$ & $m$ & $N$ & $\epsilon_a$ & $(a,b): a\frac{N}{m}=\epsilon_a+2b \mod N$ \\
\midrule
\multirow{2}{*}{$SU(n+1)_k$} & $(n+1, k)$ & $n+1$ & $1$ if: 
\begin{tabular}{@{}l@{}}
$(n+1, k)$ is even, \\
$\frac{n+1}{(n+1, k)}, a$ and \\
$\frac{k}{(n+1, k)}$ are odd
\end{tabular} & $a\frac{n+1}{(n+1,k)}=1+2b \mod (n+1)$ \\
& & & 0 else & $a\frac{n+1}{(n+1,k)}=2b \mod (n+1)$ \\
\midrule
$E_6(k)$ & 3 & 3 & 0 & $(0,0), (1,2), (2,1)$ \\
\midrule
\multirow{2}{*}{$SO(2n+1)_k$} & 2 & 2 & 1 if $k$ is odd & $(1,0), (1,1)$ \\
& & & 0 & $(0,0)$ \\
\midrule
\multirow{2}{*}{$Sp(2n)_k$} & 2 & 2 & 1 if $n$ is odd & $(1,0), (1,1)$ \\
& & & 0 & $(0,0)$ \\
\midrule
\multirow{2}{*}{$E_7(k)$} & 2 & 2 & 1 if $k=2s$ with $s$ odd & $(1,0), (1,1)$ \\
& & & 0 & $(0,0)$ \\
\midrule
\multirow{4}{*}{$\begin{array}{@{}l@{}} SO(2n)_k \\ n=2s+1 \end{array}$} & $2$ if $k\notin 4\mathbb{Z}$ & 4 & 0 & $(0,0), (0,2), (1,1), (1,3)$ \\
& $4$ if $k\in 4\mathbb{Z}$ & 4 & 
\begin{tabular}{@{}l@{}}
1 if $k=4j$ with $j,a$ odd \\
0 else
\end{tabular} & 
\begin{tabular}{@{}l@{}}
$(1,0), (1,2), (3,1), (3,3)$ \\
$(0,0), (0,2), (2,1), (2,3)$
\end{tabular} \\
\bottomrule
\end{tabular}
\caption{Parametrization of cyclic associative and braided zestings of $\mathcal{C}(\mathfrak{g}, k)$}
\label{braidedzest}
\end{table}

\begin{proof}
Regarding the verification of the results in Table~\ref{braidedzest}, we will present only the details for $SU(n+1)_k$ and $SO(2n)_k$. Details for the other cases can be calculated in a similar and more straightforward manner.

For $SU(n+1)_k$, we refer to Table~\ref{invertiblegrade0} for the generators of the invertible objects in $A$ and $A_0$, and to Table~\ref{Table: generators and twist Cyclic case} for the value of the twist for the generator. We take $h = g^{\frac{n+1}{\gcd(n+1, k)}}$ as the generator of $A_0$. Using equation~\eqref{tmatrix quantum}, we find that
\begin{align*}
    \theta_{h} &= \left( \theta_g \right)^{\left( \frac{n+1}{\gcd(n+1, k)} \right)^2} \\
    &= \exp\left( \frac{\pi i k n (n+1)^2}{\gcd(n+1, k)^2 (n+1)} \right) \\
    &= \exp\left( \frac{\pi i k n (n+1)}{\gcd(n+1, k)^2} \right) \\
    &= \left(-1 \right)^{\frac{n+1}{\gcd(n+1, k)} \frac{k}{\gcd(n+1, k)} n} 
\end{align*}

Note that if $n+1$ is odd, then $\frac{n+1}{\gcd(n+1, k)} \frac{k}{\gcd(n+1, k)} n$ is even, and thus $\theta_{h} = 1$.

If $n+1$ is even, $\theta_{h} = 1$ if either $\frac{n+1}{\gcd(n+1, k)}$ or $\frac{k}{\gcd(n+1, k)}$ is even. If both are odd, then $\gcd(n+1, k)$ must be even, and we have
\[
    \theta_{h} = \begin{cases}
        -1 & \text{if } \gcd(n+1, k) \text{ is even, and both } \frac{n+1}{\gcd(n+1, k)} \text{ and } \frac{k}{\gcd(n+1, k)} \text{ are odd}, \\
        1 & \text{otherwise}
    \end{cases}
\]
and therefore, if $a \in \mathbb{Z}$,
\[
    \epsilon_a = \begin{cases}
        1 & \text{if } \gcd(n+1, k) \text{ is even, and } \frac{n+1}{\gcd(n+1, k)}, a, \text{ and } \frac{k}{\gcd(n+1, k)} \text{ are odd}, \\
        0 & \text{otherwise}
    \end{cases}
\]

To confirm the data in Table~\ref{braidedzest}, one can directly apply equation~\ref{eq: cyclic braided zesting condition}. Specifically, for $SU(n+1)_k$, the information in Table~\ref{braidedzest} directly follows from equation~\ref{eq: cyclic braided zesting condition}. For other cases, applying equation~\ref{eq: cyclic braided zesting condition} is sufficient once the results in Tables~\ref{invertiblegrade0} and~\ref{Table: generators and twist Cyclic case} are established.

For $SO(2n)_k$, there are two cases to discuss:

When $k$ is a multiple of 4, the pairs $(a, b) \in \mathbb{Z}/4 \times \mathbb{Z}/4$ that define associative zestings $(\lambda_a, \lambda_b)$ which also admit braided zestings must satisfy one of the following conditions:

\[
    a \equiv 1 + 2b \mod 4, \quad \text{with } a \text{ odd},
\]
\[
    a \equiv 2b \mod 4, \quad \text{with } a \text{ even},
\]
yielding the solutions $(1,0), (1,2), (3,1), (3,3)$ for the first equation and $(0,0), (0,2),$ $(2,1), (2,3)$ for the second.

When $k \notin 4\mathbb{Z}$, pairs $(a, b) \in \mathbb{Z}/2 \times \mathbb{Z}/4$ allowing for associative zestings $(\lambda_a, \lambda_b)$ that also admit braided zestings must meet $2a \equiv 2b \mod 4$, yielding the solutions $(0,0), (0,2), (1,1), (1,3)$.

\end{proof}

Finally, we derive explicit formulas for the new modular data of the zested categories using the universal grading formulas outlined in Proposition~\ref{grading fixed generators}, along with the formulas \eqref{eq:cyclic S zested} and \eqref{eq:cyclic T zested}.

\begin{theorem} \label{NDatosModularesCyclyc}
Let $\mathcal{C}(\mathfrak{g}, k)$ be a Verlinde category listed in Table~\ref{Table:Admissible levels}, with the exception of $SO(2n)$ when $n$ is even. The modular data of the braided zestings presented in Table~\ref{braidedzest} takes the form
\begin{align}
\label{T-matrix cyclic general}
T_{X,X}^\lambda &= \gamma^{\alpha_X} T_{X,X}, \\
\label{S-matrix cyclic general}
s_{X,Y}^\lambda &= \gamma^{-2\beta_{X,Y}} s_{X,Y},
\end{align}
where the coefficients $\alpha_X$ and $\beta_{X,Y}$ are presented in Tables~\ref{alpha_x} and~\ref{beta xy}, respectively.
\end{theorem}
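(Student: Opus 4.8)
The plan is to feed the explicit universal gradings of Theorem~\ref{grading fixed generators} into the cyclic zesting formulas. By Theorem~\ref{th:new_modular_data} and its cyclic specialization~\eqref{eq:cyclic S zested}--\eqref{eq:cyclic T zested}, for a braided zesting datum $(\lambda_a,\lambda_b,t_\gamma)$ (one of the data catalogued in Table~\ref{braidedzest}) of a pre-metric group whose group of invertibles $A$ is cyclic of order $N$, the zested modular data of a category universally $\mathbb{Z}/N$-graded over $\widehat A$ is
\[
T^{\lambda}_{X,X} = \gamma^{\,|X|^2}\,T_{X,X},\qquad
s^{\lambda}_{X,Y} = \gamma^{-2\,|X|\,|Y|}\,s_{X,Y},
\]
where $|X|\in\{0,1,\dots,N-1\}$ is the canonical integer representative of the degree of $X$. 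So the theorem is precisely the statement that $\alpha_X=|X|^2$ and $\beta_{X,Y}=|X|\,|Y|$, with $|X|$ read off from Table~\ref{universalgrading} for each Lie type of Table~\ref{Table:Admissible levels} other than $SO(2n)_k$ with $n$ even; the proof therefore reduces to substitution and simplification.

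First I would fix the identification $\widehat A\cong A\cong\mathbb{Z}/N$, the generator $g$, the primitive $N$-th root of unity and the scalar $\gamma$ (with $\gamma^{N}=\zeta^{-(\epsilon_a+2b)}$) exactly as in the discussion preceding Theorem~\ref{th: zesting velinde category cyclic}, using the same generators as in Table~\ref{Table: generators and twist Cyclic case} so that the degree representatives agree with those of Table~\ref{universalgrading}. The one delicate point---and the only real obstacle---is that, because $\gamma^{N}\neq 1$ in general, the quantities $|X|^2$ and $|X|\,|Y|$ must be read as exponents of the chosen representative in $[0,N)$ and may \emph{not} be reduced modulo $N$: replacing $i$ by $i+N$ multiplies $\gamma^{i^2}$ by $\zeta^{-(\epsilon_a+2b)(2i+N)}$. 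This is exactly why Tables~\ref{alpha_x} and~\ref{beta xy} are stated in terms of explicit representatives rather than residues, and the substitution below must respect it.

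The remainder is a type-by-type computation. For the order-$2$ cases $SO(2n+1)_k$, $Sp(2n)_k$ and $E_7(k)$, one has $|X|\in\{0,1\}$, so $|X|^2=|X|$ and $\alpha_X$, $\beta_{X,Y}$ follow at once from the gradings $a_n$, $\sum_{j} j a_j$, $a_2+a_5+a_7$ of Table~\ref{universalgrading}. For $E_6(k)$ ($N=3$) one substitutes $|X|=a_1+2a_3+a_5+2a_6\bmod 3$ and records the finitely many values of $|X|^2$ and $|X|\,|Y|$; for $SU(n+1)_k$ ($N=n+1$) one substitutes $|X|=-\sum_{j=1}^n j a_j\bmod(n+1)$; and for $SO(2n)_k$ with $n$ odd ($N=4$) one substitutes $|X|=\sum_{j=1}^{n-2}2a_j+(n-2)a_{n-1}+n a_n\bmod 4$. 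In every case, squaring the representative (resp.\ multiplying two representatives) and tabulating the result yields the entries of Tables~\ref{alpha_x} and~\ref{beta xy}, completing the proof. No deeper input is needed; the content is bookkeeping, with the representative-versus-residue caveat the only subtlety.
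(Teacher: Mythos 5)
Your proposal is correct and follows essentially the same route as the paper, whose entire argument for Theorem~\ref{NDatosModularesCyclyc} is simply to substitute the universal gradings of Theorem~\ref{grading fixed generators} into the cyclic zesting formulas \eqref{eq:cyclic T zested}--\eqref{eq:cyclic S zested} obtained from Theorem~\ref{th:new_modular_data}, exactly as you do. One small inaccuracy: Tables~\ref{alpha_x} and~\ref{beta xy} are in fact written with reductions mod $N$ (and with sign conventions that are not literally $|X|^2$ and $|X|\,|Y|$), so your representative-versus-residue caveat --- mathematically sound in itself, since $\gamma^N\neq 1$ in general --- is a point of tension with how the paper's tables are actually stated rather than an explanation of them.
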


\begin{table}[h!]
\centering
\small
\label{your-label-here}
\begin{tabular}{lll}
\toprule
$\mathcal{C}(\mathfrak{g},k)$ & $X$ & $\alpha_X$ \\
\midrule
\multirow{2}{*}{\textit{SU(n+1)$_k$}} & \multirow{2}{*}{$(a_1, \ldots, a_n)$} & $- \left(\sum_{j=1}^n ja_j\right)^2$ \\
& & $\mod n+1$ \\
\midrule
\multirow{2}{*}{\textit{E$_6$(k)}} & \multirow{2}{*}{$(a_1, \ldots, a_6)$} & $- (a_1 + 2a_3 + a_5 + 2a_6)^2$ \\
& & $\mod 3$ \\
\midrule
\multirow{2}{*}{\textit{SO(2n+1)$_k$}} & \multirow{2}{*}{$(a_1, \ldots, a_n)$} & $- a_n^2$ \\
& & $\mod 2$ \\
\midrule
\multirow{2}{*}{\textit{Sp(2n)$_k$}} & \multirow{2}{*}{$(a_1, \ldots, a_n)$} & $- \left(\sum_{j=1}^n j a_j\right)^2$ \\
& & $\mod 2$ \\
\midrule
\multirow{2}{*}{\textit{E$_7$(k)}} & \multirow{2}{*}{$(a_1, \ldots, a_7)$} & $(a_2 + a_5 + a_7)^2$ \\
& & $\mod 2$ \\
\midrule
\multirow{2}{*}{\textit{SO(2n)$_k$, $n = 2s + 1$}} & \multirow{2}{*}{$(a_1, \ldots, a_n)$} & $- \left(\sum_{j=1}^{n-2} 2a_j + (n-2) a_{n-1} + n a_n\right)^2$ \\
& & $\mod 4$ \\
\bottomrule
\end{tabular}
\caption{Coefficient $\alpha_X$ in equation~\eqref{T-matrix cyclic general}}
\label{alpha_x}

\end{table}

\begin{table}[h!]
\centering
\small 
\begin{tabular}{lll}
\toprule
$\mathcal{C}(\mathfrak{g},k)$ & $X, Y$ & $\beta_{X,Y}$ \\
\midrule
\multirow{2}{*}{$SU(n+1)_k$} & $(a_1, \ldots, a_n)$ & \\
                               & $(b_1, \ldots, b_n)$ & $\left(\sum_{j=1}^n ja_j \right)\left(\sum_{j=1}^n jb_j \right) \mod n+1$ \\
\midrule
\multirow{2}{*}{$E_6(k)$} & $(a_1, \ldots, a_6)$ & \\
                            & $(b_1, \ldots, b_6)$ & $(a_1+2a_3+a_5+2a_6)(b_1+2b_3+b_5+2b_6) \mod 3$ \\
\midrule
\multirow{2}{*}{$SO(2n+1)_k$} & $(a_1, \ldots, a_n)$ & \\
                                & $(b_1, \ldots, b_n)$ & $a_n b_n \mod 2$ \\
\midrule
\multirow{2}{*}{$Sp(2n)_k$} & $(a_1, \ldots, a_n)$ & \\
                             & $(b_1, \ldots, b_n)$ & $\left(\sum_{j=1}^n ja_j \right)\left(\sum_{j=1}^n jb_j \right) \mod 2$ \\
\midrule
\multirow{2}{*}{$E_7(k)$} & $(a_1, \ldots, a_7)$ & \\
                            & $(b_1, \ldots, b_7)$ & $(a_2+a_5+a_7)(b_2+b_5+b_7) \mod 2$ \\
\midrule
\multirow{2}{*}{$SO(2n)_k, n=2s+1$} & $(a_1, \ldots, a_n)$ & \\
                                      & $(b_1, \ldots, b_n)$ & $\begin{array}{l}\left(\sum_{j=1}^{n-2}2a_j + (n-2) a_{n-1}+na_n\right) \\
                                      \times \left(\sum_{j=1}^{n-2}2b_j + (n-2) b_{n-1}+nb_n\right) \\ \mod 4 \end{array}$ \\
\bottomrule
\end{tabular}
\caption{Coefficient $\beta_{X,Y}$ in equation~\eqref{S-matrix cyclic general}}
\label{beta xy}
\end{table}
\subsubsection{Examples of cyclic zestings of Verlinde categories}

\begin{example}[Zestings of $SU(2)_k$, $k \equiv 2 \mod 4 $]
Following the notation of Example \ref{Example: modular data SU(2)}, let $k=2l$ where $l$ is odd. The simple object $f$, associated with $k$ in $\Irr(SU(2)_k)$, is a fermion; that is, it is an invertible object of order two and $\theta_{f} = -1$. The braided zestings correspond to pairs $(1,0), (1,1)$ and are given by powers of $(\lambda, \omega, t)$, defined as $\lambda(f,f) = f$, $\omega(f,f,f) = i$, and $t(f,f) = e^{\frac{\pi i}{4}}$. Then, the zested modular data are given by

\begin{align}
    s_{ab}^\lambda &= i^{-(ab \mod 2)}\sqrt{\frac{2}{k + 2}} \sin\left(\pi \frac{(a + 1)(b + 1)}{k + 2}\right), \\
    T_{aa}^\lambda &= e^{\frac{\pi i}{4}}  \exp \left( \frac{\pi i a^2}{2(k+2)} + \frac{\pi ia}{k+2} \right).
\end{align}

Note that the $S$-matrix of $SU(2)_k$ is real, while the zested $S^\lambda$-matrix is complex. This implies that in $SU(2)_k$, each simple object is self-dual. However, in the zested version of $SU(2)_k$ some objects are not longer self-dual. In particular, we have $N_{1,1}^0 = 1$ and $N_{1,1}^k = 0$, but under the zested fusion rules, $\tilde{N}_{1,1}^0 = 0$ and $\tilde{N}_{1,1}^k = 1$. Consequently, $X_1$ is not self-dual in the zested category; its dual is $X_{k-1}$.
\end{example}

\begin{example}[Case $E_8$]
It follows from \cite{sawin2006closed} that the Verlinde category $E_8(k)$ has a non-trivial invertible object only in the case that $k=2$. In this case, $E_8(2)$ is a braided Ising category, which are braided fusion categories with three simple objects

\begin{align*}
    \mathbf{0} &\leftrightarrow \unit &    \lambda_1 &\leftrightarrow \delta &      \lambda_8 &\leftrightarrow X,
\end{align*}
where $\delta \otimes \delta = \unit$, $X \otimes X = \unit \oplus \delta$, and $\delta \otimes X = X$. See \cite[Appendix B]{DGNO} for more details about Ising categories. The invertible object $\delta$ is a fermion, meaning $\theta_\delta = -1$. Since zesting in this case does not change the fusion rules, it follows from \cite[Theorem 3.15 (iii)]{fold-way} that the zesting construction acts transitively on the set of equivalence classes of braided Ising categories. Interestingly, $SU(2)_2$ is also an Ising category, corresponding to the complex conjugate of $E_8(2)$, meaning they are related by braided zesting.
\end{example}

\begin{example}\label{example:E6level3}
Consider $E_6(3)$, the Verlinde modular category associated to $E_6$ at level 3. $E_6(3)$ has 20 simple objects and it is graded by $\mathbb{Z}/3$. We associate the labels of simple objects with an $X_i$ as follows:
{\small 
\begin{align*}
    \mathbf{0} &\leftrightarrow X_0 &    \lambda_1 &\leftrightarrow X_1 &      \lambda_2 &\leftrightarrow X_2 &      \lambda_3 &\leftrightarrow X_3 \\
    \lambda_4 &\leftrightarrow X_4 &      \lambda_5 &\leftrightarrow X_5 &      \lambda_6 &\leftrightarrow X_6 &      2\lambda_1 &\leftrightarrow X_7 \\
    \lambda_1 + \lambda_2 &\leftrightarrow X_8 &  \lambda_1 + \lambda_3 &\leftrightarrow X_9 & \lambda_1 + \lambda_5 &\leftrightarrow X_{10} & \lambda_1 + \lambda_6 &\leftrightarrow X_{11} \\
    \lambda_2 + \lambda_6 &\leftrightarrow X_{12} & \lambda_3 + \lambda_6 &\leftrightarrow X_{13} & \lambda_5 + \lambda_6 &\leftrightarrow X_{14} & 2\lambda_6 &\leftrightarrow X_{15} \\
    3\lambda_1 &\leftrightarrow X_{16} & 2\lambda_1+\lambda_6 &\leftrightarrow X_{17} & \lambda_1+2\lambda_6 &\leftrightarrow X_{18} & 3\lambda_6 &\leftrightarrow X_{19},
\end{align*}
} 
here $\lambda_1,\ldots,\lambda_6$ are the fundamental weights presented in Table \ref{tab:root_system_E6} in the appendix.

The invertible objects are $\{X_0, X_{16}, X_{19}\}$ and the twist for all of them is 1, inducing the following $\mathbb{Z}/3$-grading: 
\begin{align*}
\text{Irr}(E_6(3))_0 &= \{X_{0}, X_{2}, X_{4}, X_{9}, X_{11}, X_{14}, X_{16}, X_{19}\}, \\
\text{Irr}(E_6(3))_1 &= \{X_1, X_5, X_8, X_{13}, X_{15}, X_{17}\}, \\
\text{Irr}(E_6(3))_2 &= \{X_3, X_6, X_7, X_{10}, X_{12}, X_{18}\}.
\end{align*}
The 9 associative zestings of $E_6(3)$ are parameterized by $(a, b) \in \mathbb{Z}/3 \times \mathbb{Z}/3$. The associative zesting functions are defined as follows:
\begin{align*}
\begin{array}{ll}
\lambda_a(i, j) = \begin{cases} 
1, & \text{if } i + j < 3, \\
X_{19}^{\otimes a}, & \text{if } i + j \geq 3,
\end{cases} &
\lambda_b(i, j, k) = \begin{cases} 
1, & \text{if } i + j < 3, \\
e^{\frac{2\pi i bk}{3}}, & \text{if } i + j \geq 3.
\end{cases}
\end{array}
\end{align*}
For braided zestings the relevant pairs are $(a, b) \in \{(0, 0),(1, 2),(2, 1)\}$. For $a=2$ the fusion rules change. In fact, for each simple object in $E_6(3)$, $\dim_{\mathbb C}\Hom (\mathbf{1}, X^{\otimes 3})\neq 0$, but zesting the fusion rules with $\lambda_2$, we find 
\[X_1\otimes_{\lambda_2}X_1\otimes_{\lambda_2}X_1= 2X_9 + X_4 + 2X_{14} + 3X_{11} + X_{19} + X_{16},\] leading to $\dim_{\mathbb C}\Hom (\mathbf{1}, X_1\otimes_{\lambda_2}X_1\otimes_{\lambda_2}X_1)= 0$. 

For the pair $(a,b)=(2,1)$, three braided zestings are obtained by choosing $\gamma$ such that $\gamma^3 = q^{-1}$, yielding new modular categories and modular data.
\end{example}

\begin{example}\label{example:so7level2}

Let us consider another example, $SO(7)_2$, corresponding to the Verlinde category associated to the root system $B_3$ at level $k=2$. This category has seven simple objects parameterized as follows:
\begin{align*}
\mathbf{0} \leftrightarrow X_0, \quad &\lambda_1 \leftrightarrow X_1, \quad \lambda_2 \leftrightarrow X_2, \quad \lambda_3 \leftrightarrow X_3, \\
2\lambda_1 \leftrightarrow X_4, \quad &\lambda_1+\lambda_3 \leftrightarrow X_5, \quad 2\lambda_3 \leftrightarrow X_6.
\end{align*}

The category $SO(7)_2$ is $\mathbb Z/2$-graded by 

\begin{align*}
\text{Irr}(SO(7)_2)_0 = \{X_0, X_1, X_2, X_4, X_6\}, &&
\text{Irr}(SO(7)_2)_1 = \{X_3, X_5\},
\end{align*}

and its fusion rules are presented in Table \ref{table fusion rules so(7) nivel 2}

{\small
\begin{table}[h!]
\begin{tabular}{|c||c|c|c|c|c|c|c|}
\hline
    & $\mathbf{0}$ & $\mathbf{1}$ & $\mathbf{2}$ & $\mathbf{3}$ & $\mathbf{4}$ & $\mathbf{5}$ & $\mathbf{6}$ \\
\hline\hline
$\mathbf{0}$ & $\mathbf{0}$ & $\mathbf{1}$ & $\mathbf{2}$ & $\mathbf{3}$ & $\mathbf{4}$ & $\mathbf{5}$ & $\mathbf{6}$ \\
\hline
$\mathbf{1}$ &  & $\mathbf{0} + \mathbf{2} + \mathbf{4}$ & $\mathbf{1} + \mathbf{6}$ & $\mathbf{3} + \mathbf{5}$ & $\mathbf{1}$ & $\mathbf{3} + \mathbf{5}$ & $\mathbf{2} + \mathbf{6}$ \\
\hline
$\mathbf{2}$ &  &  & $\mathbf{0} + \mathbf{6} + \mathbf{4}$ & $\mathbf{3} + \mathbf{5}$ & $\mathbf{2}$ & $\mathbf{3} + \mathbf{5}$ & $\mathbf{1} + \mathbf{2}$ \\
\hline
$\mathbf{3}$ &  &  &  & $\mathbf{0} + \mathbf{1} + \mathbf{2} + \mathbf{6}$ & $\mathbf{5}$ & $\mathbf{1} + \mathbf{2} + \mathbf{6} + \mathbf{4}$ & $\mathbf{3} + \mathbf{5}$ \\
\hline
$\mathbf{4}$ &  &  &  &  & $\mathbf{0}$ & $\mathbf{3}$ & $\mathbf{6}$ \\
\hline
$\mathbf{5}$ &  &  &  &  &  & $\mathbf{0} + \mathbf{1} + \mathbf{2} + \mathbf{6}$ & $\mathbf{3} + \mathbf{5}$ \\
\hline
$\mathbf{6}$ &  &  &  &  &  &  & $\mathbf{0} + \mathbf{1} + \mathbf{4}$ \\
\hline
\end{tabular}
\caption{Fusion rules of $SO(7)_2$}
\label{table fusion rules so(7) nivel 2}
\end{table}

}

This category admits four associative zestings, parameterized by $(a,b) \in \mathbb{Z}/2 \times \mathbb{Z}/2$. The fusion rules of the zested categories, in the case of $a=1$ are presented in Table \ref{tab: zsted so(7) nivel 2}. The new fusion category is not equivalent to the original one, as in $SO(7)_2$ all simple objects are self-dual. However, in the zested $SO(7)_2$, the objects $X_3$ and $X_5$ are duals of each other. Furthermore, this fusion category does not admit a braided structure and, therefore, is not a modular category. The new fusion category, which indeed admits a braided $\mathbb{Z}/2$-crossed structure, corresponds to the fusion rules $\text{FR}^{7,2}_{12}$ in \cite{vercleyen2023low}.

{\small 
\begin{table}[h!]
\begin{tabular}{|c||c|c|c|c|c|c|c|}
\hline
    & $\mathbf{0}$ & $\mathbf{1}$ & $\mathbf{2}$ & $\mathbf{3}$ & $\mathbf{4}$ & $\mathbf{5}$ & $\mathbf{6}$ \\
\hline\hline
$\mathbf{0}$ & $\mathbf{0}$ & $\mathbf{1}$ & $\mathbf{2}$ & $\mathbf{3}$ & $\mathbf{4}$ & $\mathbf{5}$ & $\mathbf{6}$ \\
\hline
$\mathbf{1}$ &  & $\mathbf{0} + \mathbf{2} + \mathbf{4}$ & $\mathbf{1} + \mathbf{6}$ & $\mathbf{3} + \mathbf{5}$ & $\mathbf{1}$ & $\mathbf{3} + \mathbf{5}$ & $\mathbf{2} + \mathbf{6}$ \\
\hline
$\mathbf{2}$ &  &  & $\mathbf{0} + \mathbf{6} + \mathbf{4}$ & $\mathbf{3} + \mathbf{5}$ & $\mathbf{2}$ & $\mathbf{3} + \mathbf{5}$ & $\mathbf{1} + \mathbf{2}$ \\
\hline
$\mathbf{3}$ &  &  &  & $\color{red}{\mathbf{1} + \mathbf{2} + \mathbf{6} + \mathbf{4}}$ & $\mathbf{5}$ & $\color{red}{\mathbf{0} + \mathbf{1} + \mathbf{2} + \mathbf{6}}$ & $\mathbf{3} + \mathbf{5}$ \\
\hline
$\mathbf{4}$ &  &  &  &  & $\mathbf{0}$ & $\mathbf{3}$ & $\mathbf{6}$ \\
\hline
$\mathbf{5}$ &  &  &  &  &  & $\color{red}{\mathbf{1} + \mathbf{2} + \mathbf{6} + \mathbf{4}}$ & $\mathbf{3} + \mathbf{5}$ \\
\hline
$\mathbf{6}$ &  &  &  &  &  &  & $\mathbf{0} + \mathbf{1} + \mathbf{4}$ \\
\hline
\end{tabular}
\caption{Fusion of the zested $SO(7)_2$}
\label{tab: zsted so(7) nivel 2}
\end{table}
}

\end{example}

\begin{example}\label{example:sp6level2}
The last example of this section is $Sp(6)_2$, Corresponding  to the Verlinde category associated to the root system $C_3$ at level $k=22$. This category has 10 simple objects parameterized as follows: 

\begin{align*}
\mathbf{0}\leftrightarrow X_0, &&\lambda_1 \leftrightarrow X_1, &&\lambda_2 \leftrightarrow X_2, &&\lambda_3 \leftrightarrow X_3, \\
2\lambda_1 \leftrightarrow X_4, &&\lambda_1 +\lambda_2 \leftrightarrow X_5,&& \lambda_1 +\lambda_3 \leftrightarrow X_6,\\ 2\lambda_2 \leftrightarrow X_7, &&\lambda_2 +\lambda_3 \leftrightarrow X_8, &&2\lambda_3 \leftrightarrow X_9
\end{align*}

The invertible objects are $\otimes$-generated by $X_9$ with $\theta_{X_9}=-1$ and $X_9^{\otimes 2}=X_0$. This defines the following $\mathbb Z/2$-grading

\begin{align*}
\text{Irr}(Sp(6)_2)_0=\{ X_0, X_2, X_4, X_6, X_7, X_9\}, &&    \text{Irr}(Sp(6)_2)_1=\{ X_1, X_3, X_5, X_8\}
\end{align*}

This category admits four associative zestings parametrized by $(a,b) \in \Z/2 \times \Z/2$. The associative zestings functions are defined by
\begin{align*}
\begin{array}{ll}
\lambda_a(i, j) = \begin{cases} 
1, & \text{if } i + j < 2, \\
X_{9}^{\otimes a}, & \text{if } i + j \geq 2,
\end{cases} &
\lambda_b(i, j, k) = \begin{cases} 
1, & \text{if } i + j < 2, \\
e^{\frac{2\pi k i(1+2b)}{4}}, & \text{if } i + j \geq 2.
\end{cases}
\end{array}
\end{align*}

The associative zestings corresponding to the pairs $(1,0)$ and $(1,1)$ have a braided structure given by the 2-cochain  $t(1,1)=\zeta$, where $\zeta^2=e^{\frac{2\pi k i(1+2b)}{4}}$, for $b=0$ and $b=1$ depending on whether we take the pair $(1,0)$ or $(1,1)$, respectively. The new fusion category is not equivalent to the original one, as in $Sp(6)_2$ every  simple object is self-dual. However, in the zested $Sp(6)_2$, the objects $X_1$ and $X_8$ are duals of each other.
\end{example}

\subsubsection{Verlinde Zesting of $SO(2n)_k$ with $n = 2s$}\label{subsubsection:so4s}

Among Verlinde modular categories, $SO(2n)_k$ with $n = 2s$ is the only family with non-cyclic group of invertible objects, see Table~\ref{table:center_of_groups}. 

As we have already observed in Table~\ref{Table:Admissible levels}, for $SO(2n)_k$ with $n = 2s$, the group of invertible objects $A$ is isomorphic to $\mathbb{Z}/2 \times \mathbb{Z}/2$. Additionally, $A_0 = A$ if $k$ is in $2\mathbb{Z}$, and $A_0 = \{1\}$ otherwise. 

For $k \in 2\mathbb{Z}$, the identification of $A$ with $\mathbb{Z}/2\times \mathbb{Z}/2$ and the twist coefficients restricted to $A$ are presented in Table \ref{twist_and_identifications}.

\begin{table}[h!]
\centering
\begin{tabular}{ll|ll}
\toprule
\multicolumn{2}{c|}{Twist coefficients} & \multicolumn{2}{c}{Identifications} \\
\midrule
$\theta_{\lambda}$ & $n=2s$ & $A$ & $\mathbb{Z}/2 \times \mathbb{Z}/2$ \\
\midrule
$\theta_{0}$ & $1$ & $0$ & $(0,0)$ \\
$\theta_{k\lambda_1}$ & $1$ & $k\lambda_1$ & $(1,1)$ \\
$\theta_{k\lambda_{n-1}}$ & $(-1)^{s}$ & $k\lambda_{n-1}$ & $(1,0)$ \\
$\theta_{k\lambda_{n}}$ & $(-1)^{s}$ & $k\lambda_{n}$ & $(0,1)$ \\
\bottomrule
\end{tabular}
\caption{Twist coefficients and identifications of invertible objects for $SO(2n)_k$ with $n$ and $k$ even.}
\label{twist_and_identifications}
\end{table}

Applying the results of Propositions \ref{Prop:Tannakian zesting Klein} and Theorem \ref{Braided zesting non tanakian klein}, we can characterize all the possible braided zestings of $SO(2n)_k$ with $k$ even as follows:

\begin{corollary}
\label{SO(2n) n even zestings}
The equivalence classes of braided zestings for $SO(2n)_k$ when $n$ and $k$ are even can be categorized into two families:
    \begin{itemize}
        \item If $n \in 4\Z$, the braided zestings are abelian 3-cocycles and can be parameterized according to Proposition \ref{prop:abelian3CocycleOfKleinGroup}.
        \item If $n \notin 4\Z$, braided zestings have been described and classified in Theorem \ref{Braided zesting non tanakian klein}.
    \end{itemize}
\end{corollary}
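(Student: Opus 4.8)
The plan is to recognize the statement as a repackaging of results already in hand, once one pins down the pre-metric group attached to $\mathcal{C}(\mathfrak{so}(2n),k)_{\text{pt}}$ for $n$ and $k$ even. Since $\mathcal{C}(\mathfrak{so}(2n),k)$ is a pseudo-unitary modular category whose universal grading group is $\widehat{A}$ with $A$ the group of invertibles, its equivalence classes of braided zestings (equipped with their canonical ribbon structure, Theorem \ref{th:new_modular_data}) are exactly $\mathbf{Sez}_{\text{br}}(A,q)$, where $(A,q)$ is the pre-metric group of $\mathcal{C}(\mathfrak{so}(2n),k)_{\text{pt}}$; here $A \cong \mathbb{Z}/2 \times \mathbb{Z}/2$ by Table \ref{table:center_of_groups}, and $q = \theta|_A$. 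First I would invoke Theorem \ref{Trivial graded invertible} / Table \ref{Table:Admissible levels} to note that for $k$ even one has $A_0 = A$, so the pre-metric group is of the ``fully symmetric'' type handled in Section \ref{section:families}; then I would read the twist off Table \ref{twist_and_identifications}: under the fixed identification $A \cong \mathbb{Z}/2 \times \mathbb{Z}/2$ (generators $k\lambda_{n-1}$, $k\lambda_{n}$) one has $q((0,0)) = q((1,1)) = 1$ and $q((1,0)) = q((0,1)) = (-1)^{s}$, where $n = 2s$.

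The case split is then immediate. If $n \in 4\mathbb{Z}$, i.e.\ $s$ is even, then $(-1)^{s} = 1$, $q$ is the trivial quadratic form, and $(A,q) = (\mathbb{Z}/2 \times \mathbb{Z}/2, 1)$; Proposition \ref{Prop:Tannakian zesting Klein} gives $\mathbf{Sez}_{\text{br}}(\mathbb{Z}/2 \times \mathbb{Z}/2, 1) = H^3_{\text{ab}}(\widehat{A}, \mathbb{C}^*)$, so by Remark \ref{Remark braided and abelian cocyles} every braided zesting datum has trivial $\lambda$ and is an abelian $3$-cocycle, with the $\mathbb{Z}/4 \times \mathbb{Z}/4 \times \mathbb{Z}/2$ parametrization and representatives \eqref{eq: abelian 3-cocycles of klein} supplied by Proposition \ref{prop:abelian3CocycleOfKleinGroup}. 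If $n \notin 4\mathbb{Z}$, i.e.\ $s$ is odd, then $(-1)^{s} = -1$, so $q((1,0)) = q((0,1)) = -1$ and $q((1,1)) = 1$; since $x_i^2 = x_i$ in $\mathbb{Z}/2$ this is precisely the quadratic form $q(\vec{x}) = (-1)^{x_1^2 + x_2^2}$, so $(A,q)$ is the pre-metric group of Theorem \ref{Braided zesting non tanakian klein}. That theorem then supplies the classification: the orbits of braided zestings are $\ker(PW^2) = \mathbb{Z}/2 \times \mathbb{Z}/2$, the equivalence classes are parametrized by $\mathbb{Z}/8 \times \mathbb{Z}/8 \times \mathbb{Z}/2$, and representatives are \eqref{zesting lambda}--\eqref{zesting t}; the corresponding zested modular data is obtained by feeding the universal grading of Table \ref{universalgradingklein} into Theorem \ref{th:new_modular_data}.

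The only step needing genuine care — and the one I would flag as the main (if minor) obstacle — is checking that the identification $A \cong \mathbb{Z}/2 \times \mathbb{Z}/2$ of Table \ref{twist_and_identifications} is compatible with the conventions of Theorem \ref{Braided zesting non tanakian klein}, i.e.\ that with the generators $k\lambda_{n-1}$, $k\lambda_{n}$ the canonical symmetric bicharacter $c_q$ of \eqref{eq: bicharacter symmetric} matches the bicharacter \eqref{braiding Dn even} used there. This comes down to the twist values $\theta_{k\lambda_1} = 1$, $\theta_{k\lambda_{n-1}} = \theta_{k\lambda_n} = (-1)^{s}$ of Table \ref{twist_and_identifications} together with the definition \eqref{eq: bicharacter symmetric}; everything else is bookkeeping, so the corollary follows with no further computation as a direct consequence of Theorems \ref{Trivial graded invertible} and \ref{Braided zesting non tanakian klein} and Propositions \ref{Prop:Tannakian zesting Klein} and \ref{prop:abelian3CocycleOfKleinGroup}.
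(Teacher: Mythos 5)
Your proposal is correct and follows essentially the same route as the paper: the paper's proof simply reads the twist values off Table \ref{twist_and_identifications} and splits on the parity of $s$, invoking Proposition \ref{Prop:Tannakian zesting Klein} (Tannakian case, $n\in 4\Z$) and Theorem \ref{Braided zesting non tanakian klein} (super-Tannakian case, $n\notin 4\Z$), exactly as you do. Your additional check that the identification of generators makes $c_q$ agree with the bicharacter \eqref{braiding Dn even} is the same bookkeeping the paper leaves implicit.
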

\begin{proof}
The conclusion follows directly from Table \ref{twist_and_identifications}.
\end{proof}

As a concluding result, we present the modular data for the braided zestings of $SO(2n)_k$ when $n$ is even and $n \notin 4\mathbb{Z}$.

\begin{theorem}
\label{zested_modular_data}
Consider the zested modular data of $SO(2n)_k$ for even $n$. These data are characterized by the equations:

\begin{itemize}
    \item If $n\in 4\Z$:
    \begin{align*}
        T_{X,X}^{\lambda} &= \zeta_4^{-a \left( \overline{\sum_{j=1}^{n-2}ja_j +a_n}\right)^2 - b\left(\overline{ \sum_{j=1}^{n-2}ja_j + a_{n-1}}\right)^2 } T_{X,X} \\
        s_{X,Y}^{\lambda} &= (-1)^{\left( a\left( \overline{\sum_{j=1}^{n-2}ja_j +a_n}\right)\left( \overline{\sum_{j=1}^{n-2}jb_j +b_n}\right)+ b\left( \overline{\sum_{j=1}^{n-2}ja_j+a_{n-1}}\right)\left( \overline{\sum_{j=1}^{n-2}jb_j+b_{n-1}}\right) \right)} \\
        &\quad \times (-1)^{c \left( \left( \overline{\sum_{j=1}^{n-2} ja_j + a_{n}} \right) \left( \overline{\sum_{j=1}^{n-2} jb_j + b_{n-1}} \right)+ \left( \overline{\sum_{j=1}^{n-2} ja_j + a_{n-1}} \right) \left( \overline{\sum_{j=1}^{n-2} jb_j + b_{n}} \right) \right)} s_{X,Y} 
    \end{align*}
 where $(a,b,c) \in \mathbb{Z}/4 \times \mathbb{Z}/4 \times \Z/2$ and $\zeta_4$ is a primitive fourth root of unity.

    \item If $n\notin 4\Z$:
    \begin{align*}
        T_{X,X}^{\lambda} &= \zeta_8^{-m_1 \left( \overline{\sum_{j=1}^{n-2} ja_j + a_{n-1}} \right)^2-m_2\left( \overline{\sum_{j=1}^{n-2} ja_j + a_n} \right)^2} (-1)^{c\left( \overline{\sum_{j=1}^{n-2} ja_j + a_{n-1}} \right) \left( \overline{\sum_{j=1}^{n-2} ja_j + a_n} \right)} T_{X,X}, \\
        s_{X,Y}^{\lambda} &= \zeta_{8}^{2 \left(m_1 \left( \overline{\sum_{j=1}^{n-2} ja_j + a_{n-1}} \right) \left( \overline{\sum_{j=1}^{n-2} jb_j + b_{n-1}} \right)+m_2 \left( \overline{\sum_{j=1}^{n-2} ja_j + a_n} \right) \left( \overline{\sum_{j=1}^{n-2} jb_j + b_n} \right) \right)} \\
        &\quad \times (-1)^{c \left( \left( \overline{\sum_{j=1}^{n-2} ja_j + a_{n-1}} \right) \left( \overline{\sum_{j=1}^{n-2} jb_j + b_n} \right)+ \left( \overline{\sum_{j=1}^{n-2} ja_j + a_n} \right) \left( \overline{\sum_{j=1}^{n-2} jb_j + b_{n-1}} \right) \right)} s_{X,Y}.
    \end{align*}
\end{itemize}
where $(m_1, m_2, c) \in \mathbb{Z}/4 \times \mathbb{Z}/4 \times \mathbb{Z}/2$, and $\zeta_8$ denotes a primitive eighth root of unity.
Here, $\overline{l}$ denotes the equivalence class of $l \mod 2$ for any integer $l \in \mathbb{Z}$. Also, $X, Y \in \text{Irr}(SO(2n)_k)$ are represented by the tuples $(a_1, \ldots, a_n)$ and $(b_1, \ldots, b_n)$, respectively.
\end{theorem}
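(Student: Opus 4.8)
The plan is to derive both formulas by a direct substitution into Theorem~\ref{th:new_modular_data}, feeding in (i) the explicit universal grading of $SO(2n)_k$ and (ii) the explicit $2$-cochain $t$ attached to a braided zesting datum. Recall from Theorem~\ref{th:new_modular_data} that for a braided (hence pseudo-unitary ribbon) zesting datum with $2$-cochain $t$ one has $s^\lambda_{X,Y}=t(|X|,|Y|)\,t(|Y|,|X|)\,s_{X,Y}$ and $T^\lambda_{X,X}=T_{X,X}/t(|X|,|X|)$, where $|X|\in\widehat A\cong\mathbb Z/2\times\mathbb Z/2$ is the grading class of $X$. By Corollary~\ref{SO(2n) n even zestings}, the admissible $2$-cochains are known: when $n\in 4\mathbb Z$ the pre-metric group is Tannakian (all nontrivial invertibles have twist $1$ by Table~\ref{twist_and_identifications}), so every braided zesting datum is an abelian $3$-cocycle and we may take $t=t_{a,b,c}$ with $t_{a,b,c}(\vec x,\vec y)=\zeta_4^{ax_1y_1+bx_2y_2}(-1)^{cx_1y_2}$ for $(a,b,c)\in\mathbb Z/4\times\mathbb Z/4\times\mathbb Z/2$ as in Proposition~\ref{prop:abelian3CocycleOfKleinGroup}; when $n\notin 4\mathbb Z$ we are in the non-Tannakian Klein case of Theorem~\ref{Braided zesting non tanakian klein}, with $t=t_{(m_1,m_2,c)}(\vec x,\vec y)=\zeta_8^{m_1x_1y_1+m_2x_2y_2}(-1)^{cx_1y_2}$.

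Next I would insert the grading from Table~\ref{universalgradingklein}: for $X=(a_1,\dots,a_n)$ with $n=2s$ one has $|X|=\bigl(\sum_{j=1}^{n-2}ja_j+sa_{n-1}+(s-1)a_n,\ \sum_{j=1}^{n-2}ja_j+(s-1)a_{n-1}+sa_n\bigr)\bmod 2$. The one place that genuinely needs care is the parity of $s$: if $n\in 4\mathbb Z$ then $s$ is even, so the label simplifies to $\bigl(\overline{\sum_{j=1}^{n-2}ja_j+a_n},\ \overline{\sum_{j=1}^{n-2}ja_j+a_{n-1}}\bigr)$, while if $n\notin 4\mathbb Z$ then $s$ is odd and it simplifies to $\bigl(\overline{\sum_{j=1}^{n-2}ja_j+a_{n-1}},\ \overline{\sum_{j=1}^{n-2}ja_j+a_n}\bigr)$. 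This interchange of the roles of $a_{n-1}$ and $a_n$ between the two cases is exactly what creates the apparent asymmetry between the two displayed formulas, and carrying it correctly through is the main obstacle (such as it is). Note also that reducing each coordinate mod $2$ first — i.e.\ using $\overline{l}\in\{0,1\}$ — is what makes $\zeta_4^{x_i^2}$ (resp.\ $\zeta_8^{x_i^2}$) and $(-1)^{cx_1x_2}$ unambiguous, which is why the statement is phrased with the classes $\overline{l}$.

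Finally I would expand. Writing uniformly $t(\vec x,\vec x)=\rho^{\,m_1x_1^2+m_2x_2^2}(-1)^{cx_1x_2}$ with $(\rho;m_1,m_2)=(\zeta_4;a,b)$ in the first case and $(\zeta_8;m_1,m_2)$ in the second, inverting and using $(-1)^{-c}=(-1)^c$ gives $T^\lambda_{X,X}=\rho^{-m_1x_1^2-m_2x_2^2}(-1)^{cx_1x_2}T_{X,X}$, and substituting $x_1,x_2$ from the grading above yields the two $T^\lambda$ formulas. Likewise $t(\vec x,\vec y)\,t(\vec y,\vec x)=\rho^{\,2m_1x_1y_1+2m_2x_2y_2}(-1)^{c(x_1y_2+x_2y_1)}$, which equals $(-1)^{m_1x_1y_1+m_2x_2y_2}(-1)^{c(x_1y_2+x_2y_1)}$ when $\rho=\zeta_4$ and retains the factor $\zeta_8^{2(\cdots)}$ when $\rho=\zeta_8$; substituting the grading labels of $X$ and $Y$ gives the two $s^\lambda$ formulas. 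No computation beyond these two cochain expansions is needed, provided the identification $\widehat A\cong\mathbb Z/2\times\mathbb Z/2$ is kept consistent between Table~\ref{twist_and_identifications} (which fixed the generators $k\lambda_{n-1},k\lambda_n$) and the cochains $t$ of Proposition~\ref{prop:abelian3CocycleOfKleinGroup} and Theorem~\ref{Braided zesting non tanakian klein}, so that the entries $x_1,x_2$ of $t$ really are the coordinates $|X|_1,|X|_2$ of the grading.
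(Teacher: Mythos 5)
Your proposal is correct and follows essentially the same route as the paper's own (very terse) proof: apply Theorem \ref{th:new_modular_data} with the fixed $\mathbb{Z}/2\times\mathbb{Z}/2$ grading of Table \ref{universalgradingklein}, simplify the grading labels according to the parity of $s$ (even for $n\in4\mathbb{Z}$, odd for $n\notin4\mathbb{Z}$), and expand the explicit cochains $t$ from Proposition \ref{prop:abelian3CocycleOfKleinGroup} and Theorem \ref{Braided zesting non tanakian klein}. One remark: your expansion correctly produces a factor $(-1)^{c x_1 x_2}$ in $T^\lambda_{X,X}$ also in the $n\in 4\mathbb{Z}$ case, which the printed statement omits; this appears to be an omission in the statement rather than a flaw in your argument.
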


\begin{proof}
The result follows directly from Theorem \ref{th:new_modular_data} through explicit calculations using the fixed $\mathbb{Z}/2 \times \mathbb{Z}/2$ grading.

For instance, for $n\notin 4\Z$ , since $n = 2s$, $s$ must be odd. According to Proposition \ref{grading fixed generators}, the degree of simple labels $X = (a_1, \ldots, a_n)$ and $Y = (b_1, \ldots, b_n)$ can be expressed as

\begin{align*}
       |X| &= \left( \overline{\sum_{j=1}^{n-2} j a_j + s a_{n-1} + (s-1) a_n}, \overline{\sum_{j=1}^{n-2} j a_j + (s-1) a_{n-1} + s a_n} \right) \\
       &= \left( \overline{\sum_{j=1}^{n-2} j a_j + a_{n-1}}, \overline{\sum_{j=1}^{n-2} j a_j + a_n} \right)
\end{align*}

With this, the conclusion follows from Theorem \ref{th:new_modular_data}.
\end{proof}

\begin{example}
As final example, we consider  $SO(12)_2$, corresponding to the Verlinde category associated to the root system $D_6$ at level $k=2$. This category has thirteen simple objects parameterized as follows:

\begin{align*}
\mathbf{0} \leftrightarrow X_0,  &&\lambda_1 \leftrightarrow X_1,  &&\lambda_2 \leftrightarrow X_2, \\
\lambda_3 \leftrightarrow X_3,  &&\lambda_4 \leftrightarrow X_4,  &&\lambda_5 \leftrightarrow X_5, \\
\lambda_6 \leftrightarrow X_6,  &&2\lambda_1 \leftrightarrow X_7,  &&\lambda_1+\lambda_5 \leftrightarrow X_8, \\
\lambda_1+\lambda_6 \leftrightarrow X_9,  &&2\lambda_5 \leftrightarrow X_{10},  &&\lambda_5+\lambda_6 \leftrightarrow X_{11}, \\
2\lambda_6 \leftrightarrow X_{12},
\end{align*}

The invertible objects are $X_0, X_7, X_{10}, X_{12}$ where $\theta_{X_{10}} = \theta_{X_{12}} = -1$, and then the category $SO(12)_2$ is $\mathbb{Z}/2 \times \mathbb{Z}/2$-graded by
\begin{align*}
\text{Irr}(SO(12)_2)_{(0,0)} &= \{ X_0, X_2, X_4, X_7, X_{10}, X_{12} \}, &&
\text{Irr}(SO(12)_2)_{(0,1)} = \{ X_6, X_8 \}, \\
\text{Irr}(SO(12)_2)_{(1,1)} &= \{ X_1, X_3, X_{11} \}, &&
\text{Irr}(SO(12)_2)_{(1,0)} = \{ X_5, X_9 \}.
\end{align*}

By Theorem \ref{Braided zesting non tanakian klein}, the braided zestings of $SO(12)_2$ are parametrized by $(a,b,c) \in \mathbb{Z}/4 \times \mathbb{Z}/4 \times \mathbb{Z}/2$, where the associated 2-cocycle to $(a,b,c)$ is given by $$\lambda_{a \bmod 2, b \bmod 2}(\vec{x},\vec{y}) = (ax_1y_1 \bmod 2, bx_2y_2 \bmod 2).$$ The triples $(a,b,c)$ where $a = b = 0 \bmod 2$ correspond to abelian 3-cocycles, and for them, the fusion rules do not change. For the 2-cocycles $\lambda_{0,1}$, $\lambda_{1,0}$, and $\lambda_{1,1}$, we obtain new modular categories, where the fusion rules change. In fact, every simple objects in $SO(12)_2$ is self-dual, however as we can see in Table \ref{tab:dualities_2-cocycles}, the dualities change depending on the 2-cocycle.

\begin{table}[h!]
\centering
\begin{tabular}{ll|ll}
\toprule
\multicolumn{2}{c|}{2-Cocycle} & \multicolumn{2}{c}{New Dualities} \\
\midrule
\multicolumn{2}{l|}{$\lambda_{1,1}$} & \multicolumn{2}{l}{$X_5^* = X_9$, $X_6^* = X_8$} \\
\multicolumn{2}{l|}{$\lambda_{1,0}$} & \multicolumn{2}{l}{$X_1^* = X_{11}$, $X_5^* = X_9$} \\
\multicolumn{2}{l|}{$\lambda_{0,1}$} & \multicolumn{2}{l}{$X_1^* = X_{11}$, $X_6^* = X_8$} \\
\bottomrule
\end{tabular}
\caption{Dualities in the zested $SO(12)_2$ under different 2-cocycles}
\label{tab:dualities_2-cocycles}
\end{table}
\end{example}

\begin{remark}
We have classified and constructed all possible braiding zesting data for the universal grading group of Verlinde categories. Since for a Verlinde category, most of its universal grading groups are $\mathbb{Z}/2$ and $\mathbb{Z}/3$, not much is lost by considering universal grading groups, with the exception of the cases $SU(N+1)_k$ and $SO(2N)_k$ with even $N$, which have gradings $\mathbb{Z}/N$ and $\mathbb{Z}/2 \times \mathbb{Z}/2$, respectively. Considering different gradings corresponds to taking subgroups of the universal group, and in this case, some additional complications may arise in the $H^4$-obstruction. The associated zesting data will now take values in a pointed braided category, which is not necessarily symmetric, and therefore the definition of associative and braided zesting for a pre-metric group needs to be slightly modified. Note that the general theory was developed in detail in \cite{zesting}; it is just that given the problem we wanted to address in this paper, our definition and results were restricted to the universally graded case.
\end{remark}

\section*{Appendix}
On this appendix, we will recall some of the formulas from root systems that we need in the computations of the paper, following  \cite{bourbaki2002lie} and \cite{humphreys2012introduction}.

\begin{table}[h!]
\centering
\begin{tabular}{ll}
\toprule
Root System $A_n$ \, \, $n\geq1$ & $E\subset \mathbb{R}^{n+1} $ Orthogonal subspace to $e_1+\cdots+ e_{n+1}$        \\ \midrule
Basis                           & $\alpha_1=e_1-e_2, \cdots, \alpha_{n}=e_n-e_{n+1}$                               \\ 
Normalized inner product & $\left\langle \ , \ \right \rangle=( \, , \, )$ \\ 
Dual Coxeter                    & $h^\vee=n+1$                                                                     \\ 
Highest root                    & $\Theta=e_1-e_{n+1}=\alpha_1+ \cdots+ \alpha_{n}$                                \\ 
Fundamental Weights             & $\displaystyle{\lambda_i= (e_1+\cdots+e_i)- \frac{i}{n+1}\sum_{j=1}^{n+1}e_{j}}$ \\ 
Sum of Positive Roots          & $2\rho=ne_1+(n-2)e_2+(n-4)e_3+\cdots+(n-2n)e_{n+1}$                              \\ \bottomrule

\end{tabular}
\caption{}
\label{tab:root_system_An}
\end{table}

\begin{table}[h!]
\centering
\begin{tabular}{ll}
\toprule
Root System $B_n$ \, \, $n\geq2$ & $E=\mathbb{R}^{n} $                                                                                                                                                                                                                                                                                                                                                                           \\ \midrule
Basis                           & $\alpha_1=e_2-e_3, \cdots, \alpha_{n-2}=e_{n-1}-e_{n}, \alpha_{n-1}=e_{n-1}+e_n$, $\alpha_{n}=e_n$                                                                                                                                                                                                                                                                                                                     \\ 
Normalized inner product & $\left\langle \ , \ \right \rangle=( \, , \, )$ \\ 
Dual Coxeter                    & $h^\vee=2n-1$                                                                                                                                                                                                                                                                                                                                                                                 \\ 
Highest root                    & $\Theta=e_2+e_{n}=\alpha_1+ 2\alpha_2+2\alpha_3+ \cdots+ 2\alpha_{n-1}+ \alpha_n$                                                                                                                                                                                                                                                                                                                         \\ 
Fundamental Weights             & 
$\begin{array}{cc}
\lambda_i=e_2 + e_3 +\cdots+ e_{i+1}  &  i < n-1 \\
  \lambda_{n-1} =\frac{1}{2}(e_2+ \cdots+ e_{n-1}+ e_n) \\
  \lambda_n =\frac{1}{2}(e_2+ \cdots+ e_{n-1}+ e_n)  & 
\end{array}$

 \\ 
Sum of Positive Roots          & $2\rho=(2n-1)e_2+(2n-3)e_3+\cdots+(2n-(2n-1))e_{n}$                                                                                                                                                                                                                                                                                                                                           \\ \bottomrule
\end{tabular}
\caption{}
\label{tab:root_system_Bn}
\end{table}

\begin{table}[h!]
\centering
\begin{tabular}{ll}
\toprule
Root System $C_n$ \, \, $n\geq2$ & $E=\mathbb{R}^{n} $                                                          \\ \midrule
Basis                           & $\alpha_1=e_1-e_2, \cdots, \alpha_{n-1}=e_{n-1}-e_{n}$, $\alpha_{n}=2e_n$.   \\ 
Normalized inner product & $\left\langle \ , \ \right \rangle=( \, , \, )/2$ \\ 
Dual Coxeter                    & $h^\vee=n+1$                                                                 \\ 
Highest root                    & $\Theta=2e_1=2\alpha_1+ 2\alpha_2+2\alpha_3+ \cdots+ 2\alpha_{n-1}+\alpha_n$ \\ 
Fundamental Weights             & $\lambda_i=e_1+e_2+\cdots+ e_i$                                              \\ 
Sum of Positive Roots          & $2\rho=2ne_1+(2n-2)e_2+\cdots+(2n-2(n-1))e_{n}$                              \\ \bottomrule
\end{tabular}
\caption{}
\label{tab:root_system_Cn}
\end{table}

\begin{table}[h!]
\centering
\begin{tabular}{ll}
\toprule
Root System $D_n$ \, \, $n\geq3$ & $E=\mathbb{R}^{n} $                                                                                                                                                                                                                          \\ \midrule
Basis                  & $\alpha_1=e_1-e_2, \cdots, \alpha_{n-1}=e_{n-1}-e_{n}$, $\alpha_{n}=e_{n-1}+e_n$.                                                                                                                                                            \\ 
Normalized inner product & $\left\langle \ , \ \right \rangle=( \, , \, )$ \\ 
Dual Coxeter           & $h^\vee=2n-2$                                                                                                                                                                                                                                \\ 
Highest root           & $\Theta=e_1+e_2=\alpha_1+ 2\alpha_2+ \cdots+ 2\alpha_{n-2}+\alpha_{n-1}+\alpha_n$                                                                                                                                                            \\ 
Fundamental Weights    & $\begin{array}{cc}\lambda_i=e_1 + e_2 +\cdots+ e_{i}  &  i \leq n-2 \\   \lambda_{n-1} =\frac{1}{2}(e_1+ \cdots+ e_{n-1}-e_n)  &  \\ \lambda_{n}=\frac{1}{2}(e_1+\cdots e_n) &\\ \end{array}$ \\ 
Sum of Positive Roots & $2\rho=2(n-1)e_1+2(n-2)e_2+\cdots+2(n-(n-1))e_{n-1}$                                                                                                                                                                                         \\ \bottomrule
\end{tabular}
\caption{}
\label{tab:root_system_Dn}
\end{table}

\begin{table}[h!]
\centering
\begin{tabular}{ll}
\toprule
Root System $E_6$ & $E\subset \mathbb{R}^{8} $                                                                                                                                                                                                                                                                                                                                                                  \\ \midrule
Basis                  & \scriptsize{\begin{tabular}[c]{@{}l@{}}$\begin{array}{cc} \alpha_1=\frac{1}{2}(e_1+e_8) -\frac{1}{2}(e_2+\cdots+e_7)  &  \alpha_2=e_1+e_2, \cdots ,\alpha_6= e_5-e_4\\

\end{array}$\end{tabular}}                                                                                                                                                                                                                                                                                                         \\ 
Normalized inner product & $\left\langle \ , \ \right \rangle=( \, , \, )$ \\ 
Dual Coxeter           & $h^\vee=12$                                                                                                                                                                                                                                                                                                                                                                                 \\ 
Highest root           & \scriptsize{$\Theta=\frac{1}{2}(e_1+e_2+\cdots+e_5-e_6-e_7+e_8)=\alpha_1+ 2\alpha_2+ 2\alpha_3+3\alpha_4+2\alpha_5+\alpha_6$ }                                                                                                                                                                                                                                                                           \\ 
Fundamental Weights    & \scriptsize{\begin{tabular}[c]{@{}l@{}}$\begin{array}{cc}\\ \lambda_1=\frac{2}{3}(e_8-e_7-e_6)   &  \lambda_2=\frac{1}{2}(e_1+\cdots+e_5-e_6-e_7+e_8) \\\\   \lambda_{3} =\frac{5}{6}(e_8-e_7-e_6)+ \frac{1}{2}(-e_1+e_2+\cdots+e_5)  &  \lambda_{4}=e_3+e_4+e_5-e_6-e_7+e_8 \\\\ \lambda_{5}=\frac{2}{3}(e_8-e_7-e_6)+e_4 + e_5 &\lambda_{6}=\frac{1}{3} (e_8-e_7-e_6)+e_5\\ \end{array}$\end{tabular}} \\ 
Sum of Positive Roots & \scriptsize{$2\rho=2( e_2+2e_3+3e_4+4e_5+4(e_8-e_7-e_6)$   }                                                                                                                                                                                                                                                                                                                                             \\ \bottomrule
\end{tabular}
\caption{}
\label{tab:root_system_E6}
\end{table}

\begin{table}[h!]
\centering
\begin{tabular}{ll}
\toprule
Root System $E_7$ & $E\subset \mathbb{R}^{8} $  \small{  Orthogonal to} $e_7+e_8$                                                                                                                                                                                                                                                                                                                                                              \\ \midrule
Basis                  & \scriptsize{\begin{tabular}[c]{@{}l@{}}$\begin{array}{cc} \alpha_1=\frac{1}{2}(e_1+e_8) -\frac{1}{2}(e_2+\cdots+e_7)  &  \alpha_2=e_1+e_2, \cdots ,\alpha_7= e_6-e_5\\

\end{array}$\end{tabular}}     \\ 
Normalized inner product & $\left\langle \ , \ \right \rangle=( \, , \, )$                                                                                                                                                                                                                                                                                                      \\ 
Dual Coxeter           & $h^\vee=18$                                                                                                                                                                                                                                                                                                                                                                                 \\ 
Highest root           & \scriptsize{$\Theta=e_8-e_7=2\alpha_1+ 2\alpha_2+ 3\alpha_3+4\alpha_4+3\alpha_5+2\alpha_6+ \alpha 7$ }                                                                                                                                                                                                                                                                           \\ 
Fundamental Weights    & \scriptsize{\begin{tabular}[c]{@{}l@{}}$\begin{array}{cc}\\ \lambda_1= e_8-e_7 &  \lambda_2=\frac{1}{2}(e_1+\cdots+e_6-2e_7-+2e_8) \\\\   \lambda_{3} =\frac{1}{2}(-e_1+e_2+\cdots+e_6-3e_7+3e_8)  &  \lambda_{4}=e_3+\cdots+e_6+2(e_8-e_7) \\\\ \lambda_{5}=\frac{1}{2}(2e_4+2e_5+2e_6+3(e_8-e_7)) &\lambda_{6}=e_5+e_6-e_7+e_8\\\\
\lambda_{7}=e_6 + \frac{1}{2}(e_8-e_7) & \\
\end{array}$\end{tabular}} \\ 
Sum of Positive Roots & \scriptsize{$2\rho=2( e_2+2e_3+3e_4+4e_5+4(e_8-e_7-e_6)$   }                                                                     \\ \bottomrule
\end{tabular}
\caption{}
\label{tab:root_system_E7}
\end{table}

\section*{Data Availability Statement}
No data were used to support the findings of this study. The research is purely theoretical and does not rely on any empirical data sets.

\newcommand{\etalchar}[1]{$^{#1}$}

\end{document}